\newcommand\supp{\mathrm{supp}}
\newtheorem{theoreme}{Theorem}[section] %
\newtheorem{proposition}[theoreme]{Proposition} %
\newtheorem{corollary}[theoreme]{Corollary} %
\newtheorem{lemme}[theoreme]{Lemma} %
\newtheorem{definition}{Definition}[section] %
\newtheorem{remark}[theoreme]{Remark} %
\newcommand\sk{\smallskip}
\newcommand\R{\mathbb{R}} 
\newcommand\dimm{\underline{\dim}_H}
\renewcommand\widering[1]{\ring{#1}}
\author{E. Daviaud, Uliège}
\begin{document}
\title[Dynamical approximation by mixing systems]{Hausdorff dimension of dynamical Diophantine approximation  associated with ergodic mixing systems}
\maketitle

\begin{abstract}
    In this article, we study the Diophantine approximation by typical orbits with respect to an ergodic  dynamical system which is fast enough mixing with respect to the space $L^1$ and the space of bounded variation function. As an application, of our main result, we are able to fully extend the results proved by Järvenpää, Järvenpää, Myllyoja and Stenflo in the case of random coverings to the dynamical settings. In addition, our result applies on the linear part to the case of ($\times 2,\times 3$, $\mu$) on $\mathbb{T}^2$, where  $\mu$ is a self-affine measure associated with the $(2,3)-$Bedford-McMullen carpet. This result, combined with suitable assumptions regarding the multifractal spectrum of $\mu,$ fully extends a pioneer work of Fan-Schmeling-Troubetzkoy dealing with the case of the doubling of the angle on $\mathbb{T}^1.$ 
\end{abstract}

\section{Introduction}

Given a sequence $(x_n)_{n\in\mathbb{N}}$ of elements of $\mathbb{R}^d$, understanding the dimension of points approximable at a given rate by $(x_n)_{n\in\mathbb{N}}$ has been a topic of interest in Diophantine approximation, dynamical systems and multifractal analysis. Such questions were historically first considered in the context of Diophantine approximation, by Dirichlet and Khintchine, and later on, by Besicovitch and Jarnik (see \cite{khintchine1937,Jarnik}). These authors were interested in estimating, for a given $\psi:\mathbb{N}\to \mathbb{R}_+$, the Hausdorff dimension (which we will denote $\dim_H$) of the sets $$E_{\psi}=\left\{x\in[0,1] \  : \ \vert x-\frac{p}{q}\vert\leq \psi(q)\text{ i.o. }0\leq p\leq q,  \ q\wedge p=1\right\},$$
where i.o. means that the above inequality holds for an infinity of pairs $(p,q).$ It was established by Beresnevich and Velani in \cite{BV} that estimating $\dim_H E_{\psi}$ for every $\psi$ was conditioned to the validity of the so-called Duffin-Schaeffer conjecture, which was proved to hold recently, by Koukoulopoulos and Maynard in  \cite{Maynkou}. As a  consequence,  it is now established that for every mapping $\psi:\mathbb{N}\to \mathbb{R}_+,$ one has $$\dim_H E_{\psi}=\min\left\{1,s_{\psi}\right\}, $$
where $s_{\psi}=\inf\left\{s:  \ \sum_{n\geq 1}\phi(q)\psi(q)^s <+\infty\right\}$ and $\phi$ denotes the Euler mapping. 

Similar questions and problems appear very naturally in dynamical systems and multifractal analysis. In the context of multifractal analysis, the regularity of a given mapping at a  point $x$ often depends on the rate of approximation of $x$ by a specific sequence of points $(x_n)_{n\in\mathbb{N}}$ around which the regularity of $f$ is well understood, see \cite{Jaff-prescription,JaffRiemann} for instance. 

In dynamical system, given $T:X\to X,$ estimating the dimension of sets approximable by a given orbit $(T^n(x))_{n\in\mathbb{N}}$ is also very natural and has been investigated by various authors. Among such study, one can mention the work of Liao and Seuret \cite{LS}, which, refining a pioneer work of Fan-Schmeling-Troubetzkoy \cite{FanSchmTrou}, proved that for any expanding markov map $T:\mathbb{T}^1\to \mathbb{T}^1$ and any Gibbs measure $\mu$ associated with an Hölder potential, for $\mu$-almost every $x$, one has \begin{equation}
\label{thmLS}
\dim_H \left\{y \  : \ \vert T^n(x)-y\vert\leq \frac{1}{n^{\delta}}\text{ i.o. }\right\}=\begin{cases}\frac{1}{\delta}\text{ if }\delta\geq \frac{1}{\dim(\mu)}\\ D_{\mu}(\frac{1}{\delta})\text{ if }\frac{1}{h_0}\leq\delta\leq \frac{1}{\dim(\mu)} \\ 1 \text{ if  }\delta\leq  \frac{1}{h_{0}},\end{cases}
\end{equation}
where $h\mapsto D_{\mu}(h)$ denotes the multifractal spectrum and $h_0$ is such that $D_{\mu}(h_0)=1.$ We also refer to \cite{LiExpAhlf} for the case of ergodic systems $(T,\mu)$ which are exponentially  mixing and the measure $\mu$ is Alfhors-regular.

Later on,  Persson proved  \cite{PerssInhomo} that if an ergodic system of $\mathbb{R}^d$, $(T,\mu),$ mixes exponentially fast (with respect to $L^1$ and the space of bounded variation functions, see Definition \ref{DefDynamicalMix} and Definition \ref{defphimix}), then  for $\mu$-almost every $x$, for every $\delta\geq \frac{1}{\overline{\dim}_H \mu}$
\begin{equation}
\label{equaPersson}
\frac{\kappa_{\mu}}{\delta}\leq\dim_H\left\{y : \ \vert\vert T^{n}(x)-y\vert\vert_{\infty}\leq \frac{1}{n^{\delta}}\text{ i.o. }\right\}\leq \frac{1}{\delta},
\end{equation}
where $\kappa_{\mu}$ depends on the coarse multifractal spectrum of the measure $\mu$. In addition, Persson conjectured that the almost sure  Hausdorff dimension of the above sets should  in general be the same as in the case of approximation by random variables.

In this parallel case, it was recently established by Järvenpää, Järvenpää, Myllyoja and Stenflo in \cite{JJMS} that for almost every  i.i.d. sequence of random variables of common law $\mu \in\mathcal{M}(\mathbb{R}^d)$ $(X_n)_{n\in\mathbb{N}}$, for every $\delta\geq \frac{1}{\overline{\dim}_H \mu},$
one has 

\begin{equation}
\label{resultRandomnonexa}
\dim_H \left\{y : \ \vert\vert X_n -y\vert\vert_{\infty}\leq \frac{1}{n^{\delta}}\text{ i.o. }\right\}=\frac{1}{\delta}.
\end{equation}

In the present article, we establish that \eqref{resultRandomnonexa} also holds in dynamical settings, provided that the ergodic system involved  is super-polynomially mixing (with respect to $L^1$ against the space of bounded variation functions), confirming that the result  conjectured by Persson  holds true. More precisely, under the assumption that $(T,\mu)$ is super-polynomially mixing (Definition \ref{defphimix}), we establish that for every $\delta\geq \frac{1}{\overline{\dim}_H \mu},$ for $\mu$-almost every $x$, one has 
\begin{equation}
\label{mainEquaIntro}
\dim_H\left\{y : \ \vert\vert T^{n}(x)-y\vert\vert_{\infty}\leq \frac{1}{n^{\delta}}\text{ i.o. }\right\}= \frac{1}{\delta}.
\end{equation}
Furthermore, we establish that if $(T,\mu)$ is a $\Sigma$-mixing  (Definition \ref{defphimix}) and $\mu$ is exact-dimensional (Definition \ref{dim}), then,  for every non increasing  sequence $\underline{r}=(r_n)_{n\in\mathbb{N}}\in \mathbb{R}_{+}^{\mathbb{N}}$ such that 
$$s_{\underline{r}}:=\inf\left\{s:\sum_{n\geq 1}r_n^s<+\infty\right\}\leq \dim(\mu),$$
one has $$\dim_H \left\{y : \ \vert\vert T^{n}(x)-y \vert\vert_{\infty}\leq r_n \text{ i.o. }\right\}=s_{\underline{r}}.$$
It is worth mentioning that the random analog of the above result was also established in \cite{JJMS}. In addition, the authors proved that no general formula (involving geometric quantities one usually estimates) holds regarding the dimension of the random covering sets when the measure is not exact-dimensional (or does not admit sets of positive measure on which the local dimension exists as a limit to be  more precise).  

Moreover, in a companion paper of the present article, based an a previous work of Galatolo, Saussol and Rousseau \cite{Saussogalato}, we provide an example of an ergodic system $(T,\mathcal{L}^3)$ on $\mathbb{T}^3$, which is $n \mapsto n^{-s}$-mixing, where $0<s<1$ and for every $x \in\mathbb{T}^3,$ $$\dim_H \left\{y \in\mathbb{T}^3 : \ \vert\vert y-T^{n}(x)\vert\vert_{\infty}\leq \frac{1}{n^{\frac{1}{3}}}\text{ i.o. }\right\}<3.$$
This example shows that our result in the case where $\mu$ is exact-dimensional is optimal in a strong sense: if $(T,\mu)$ is $n\mapsto n^{-s}$-mixing with $s>1$, then it is $\Sigma$-mixing, hence \eqref{mainEquaIntro} holds true whereas if $(T,\mu)$ is $n\mapsto n^{-s}$-mixing with $0<s<1,$ \eqref{mainEquaIntro} needs not hold in general.

We point out that all the particular cases of mappings treated in \cite{FanSchmTrou,LS,PerssInhomo} are conformal and that the conformality displays a crucial role in the proof techniques used in these articles.
As an illustration   of the result established in the present article, we extend  the result established by Fan-Schmeling-Troubetzkoy \cite{FanSchmTrou} in the case of the $\times 2$ map on $\mathbb{T}^1$ to the case of the $\times 2,\times 3$ map, $T_{2,3},$ on $\mathbb{T}^2$ in the $``$ linear part $"$ and, under suitable assumption regarding the multifractal part, we show that the same result holds. More precisely, let $S=\left\{f_1,....,f_m\right\}$ be the canonical IFS associated with  $\mathbb{T}^2$ seen has a $(2,3)$-Bedford-McMullen carpet, $(p_1,...,p_m)\in (0,1)^m$ and  $\mu$ the corresponding self-affine measure. We establish that, for $\mu$-almost every $x$, 
\begin{equation}
\label{equFanShmelTroub}
\dim_H \left\{y \  : \ \vert\vert T_{2,3}^n(x)-y\vert\vert_{\infty}\leq \frac{1}{n^{\delta}}\text{ i.o. }\right\}=\begin{cases}\frac{1}{\delta}\text{ if }\delta\geq \frac{1}{\dim(\mu)}\\ D_{\mu}(\frac{1}{\delta})\text{ if }\frac{1}{h_0}\leq\delta\leq \frac{1}{\dim(\mu)}\text{ and }\mu\text{ is m.f.} \\ \dim_H K \text{ if  }\delta\leq  \frac{1}{h_{0}}\text{ and }\mu\text{ is m.f.},\end{cases}
\end{equation}
where $K=\supp(\mu),$   $h_0$ is such that $D_{\mu}(h_0)=\dim_H K$ and $``$ m.f. $"$ means that the measure $\mu$ is $``$ multifractal regular $"$, as defined in Definition \ref{defmf}. A reader which is well-versed in the dimension theory of self-affine sets and measures will notice that it is known in general that self-affine measure on Bedford-McMullen carpet do not satisfy  the multifractal formalism. However, it turns out that the proof technique (for the corresponding part of the above spectrum) presented in \cite{SeurMarkov} (and slightly generalized in \cite{EP}) is robust enough to deal with that case where $\mu$ is multifractal regular. We also add that there are currently no example of self-affine measure which does not satisfy Definition \ref{defmf} (i.e. is not multifractal regular) and, in view of recent works of Chamberlain-Cann \cite{MultiBedCann}, it seems reasonable to conjecture that every self-affine measure on Bedford-McMullen carpets satisfy this assumption. As, technically, the estimation for $\delta\leq \frac{1}{\dim_H \mu}$ mainly relies on the argument used in \cite{SeurMarkov,EP} the main novelty brought by this article is the case $\delta \geq \frac{1}{\dim(\mu)}.$



\section{Preliminaries, mixing properties of ergodic systems}

Let us start with some notations 

 Let $d$ $\in\mathbb{N}$. For $x\in\mathbb{R}^{d}$, $r>0$,  $B(x,r)$ stands for the closed ball of ($\mathbb{R}^{d}$,$\parallel \ \ \parallel_{\infty}$) of center $x$ and radius $r$. 
 Given a ball $B$, $\vert B\vert$ stands for the diameter of $B$. For $t\geq 0$, $\delta\in\mathbb{R}$ and $B=B(x,r)$,   $t B$ stands for $B(x,t r)$, i.e. the ball with same center as $B$ and radius multiplied by $t$,   and the  $\delta$-contracted  ball $B^{\delta}$ is  defined by $B^{\delta}=B(x ,r^{\delta})$.
\smallskip

Given a set $E\subset \mathbb{R}^d$, $\widering{E}$ stands for the  interior of the set $E$, $\overline{E}$ its  closure and $\partial E =\overline{E}\setminus \widering{E}$ its boundary. If $E$ is a Borel subset of $\R^d$, its Borel $\sigma$-algebra is denoted by $\mathcal B(E)$.
\smallskip

Given a topological space $X$, the Borel $\sigma$-algebra of $X$ is denoted $\mathcal{B}(X)$ and the space of probability measure on $\mathcal{B}(X)$ is denoted $\mathcal{M}(X).$ 

\sk
 The $d$-dimensional Lebesgue measure on $(\mathbb R^d,\mathcal{B}(\mathbb{R}^d))$ is denoted by 
$\mathcal{L}^d$.
\smallskip

For $\mu \in\mathcal{M}(\R^d)$,   $\supp(\mu)=\left\{x\in \mathbb{R}^d: \ \forall r>0, \ \mu(B(x,r))>0\right\}$ is the topological support of $\mu$.
\smallskip

 Given $E\subset \mathbb{R}^d$, $\dim_{H}(E)$ and $\dim_{P}(E)$ denote respectively  the Hausdorff   and the packing dimension of $E$.
\smallskip

Given a set $S$, $\chi_S$ denotes the indicator function of $S$, i.e., $\chi_S(x)=1$ if $x\in S$ and $\chi_S(x)=0$ otherwise.

 \smallskip

 Given $n\in\mathbb{N},$ $\mathcal{D}_n$ denotes the set of dyadic cubes of generation $n$ and $\mathcal{D}$ the set of all dyadic cubes, i.e.
 \begin{equation*}
\mathcal{D}_{n}=\left\{2^{-n}(k_1,...,k_d)+2^{-n}[0,1)^d, \ (k_1,...,k_d)\in\mathbb{Z}^d\right\}\text{ and }\mathcal{D}=\bigcup_{n\geq 0}\mathcal{D}_{n}.
\end{equation*}

\subsection{Recall on geometric measure theory}

\subsubsection{Hausdorff content, measure and dimension}

\begin{definition}
\label{hausgau}
Let $\zeta :\mathbb{R}^{+}\mapsto\mathbb{R}^+$. Suppose that $\zeta$ is increasing in a neighborhood of $0$ and $\zeta (0)=0$. The  Hausdorff outer measure at scale $t\in(0,+\infty]$ associated with the gauge $\zeta$ of a set $E$ is defined by 
\begin{equation}
\label{gaug}
\mathcal{H}^{\zeta}_t (E)=\inf \left\{\sum_{n\in\mathbb{N}}\zeta (\vert B_n\vert) : \, \vert B_n \vert \leq t, \ B_n \text{ closed ball and } E\subset \bigcup_{n\in \mathbb{N}}B_n\right\}.
\end{equation}
The Hausdorff measure associated with $\zeta$ of a set $E$ is defined by 
\begin{equation}
\mathcal{H}^{\zeta} (E)=\lim_{t\to 0^+}\mathcal{H}^{\zeta}_t (E).
\end{equation}
\end{definition}

For $t\in (0,+\infty]$, $s\geq 0$ and $\zeta:x\mapsto x^s$, one simply uses the usual notation $\mathcal{H}^{\zeta}_t (E)=\mathcal{H}^{s}_t (E)$ and $\mathcal{H}^{\zeta} (E)=\mathcal{H}^{s} (E)$, and these measures are called $s$-dimensional Hausdorff outer measure at scale $t\in(0,+\infty]$ and  $s$-dimensional Hausdorff measure respectively. Thus, 
\begin{equation}
\label{hcont}
\mathcal{H}^{s}_{t}(E)=\inf \left\{\sum_{n\in\mathbb{N}}\vert B_n\vert^s : \, \vert B_n \vert \leq t, \ B_n \text{  closed ball and } E\subset \bigcup_{n\in \mathbb{N}}B_n\right\}. 
\end{equation}
The quantity $\mathcal{H}^{s}_{\infty}(E)$ (obtained for $t=+\infty$) is called the $s$-dimensional Hausdorff content of the set $E$.
\begin{definition} 
\label{dim}
Let $\mu\in\mathcal{M}(\mathbb{R}^d)$.  
For $x\in \supp(\mu)$, the lower and upper  local dimensions of $\mu$ at $x$ are  defined as
\begin{align*}
\underline\dim_{{\rm loc}}(\mu,x)=\liminf_{r\rightarrow 0^{+}}\frac{\log(\mu(B(x,r)))}{\log(r)}
 \mbox{ and } \ \    \overline\dim_{{\rm loc}}(\mu,x)=\limsup_{r\rightarrow 0^{+}}\frac{\log (\mu(B(x,r)))}{\log(r)}.
 \end{align*}
Then, the lower and upper Hausdorff dimensions of $\mu$  are defined by 
\begin{equation}
\label{dimmu}
\dimm(\mu)={\mathrm{ess\,inf}}_{\mu}(\underline\dim_{{\rm loc}}(\mu,x))  \ \ \mbox{ and } \ \ \overline{\dim}_P (\mu)={\mathrm{ess\,sup}}_{\mu}(\overline\dim_{{\rm loc}}(\mu,x))
\end{equation}
respectively.
\end{definition}

It is known (for more details see \cite{F}) that
\begin{equation*}
\begin{split}
\dimm(\mu)&=\inf\{\dim_{H}(E):\, E\in\mathcal{B}(\mathbb{R}^d),\, \mu(E)>0\} \\
\overline{\dim}_P (\mu)&=\inf\{\dim_P(E):\, E\in\mathcal{B}(\mathbb{R}^d),\, \mu(E)=1\}.
\end{split}
\end{equation*}
When $\underline \dim_H(\mu)=\overline \dim_P(\mu)$, this common value is simply denoted by $\dim(\mu)$ or $\dim_H \mu$ and~$\mu$ is said to be \textit{exact-dimensional}. It is worth mentioning that many measures which are dynamically defined are exact dimensional. For instance, given a $C^{\infty}$ compact Manifold $M \subset \mathbb{R}^2$ and  $f$, a $C^2$-diffeomorphism  $M\to M$, it was established by Ledrappier and Young in \cite{LY} that any $f$-ergodic measure is exact dimensional.

\subsubsection{Multifractal analysis}


It was first established by Fan, Schmeling and Troubetzkoy that, in the case of the doubling of the angle equipped with an equilibrium measure associated with an Hölder potential,  for $\delta<\frac{1}{\overline{\dim}_H \mu}$ the dimension of  
$$ \left\{y : \ \vert\vert T^{n}(x)-y\vert\vert_{\infty}\leq \frac{1}{n^{\delta}}\text{ i.o. }\right\}$$
are related to the multifractal spectrum of these equilibrium measures.

Let $\mu\in\mathcal{M}(\mathbb{R}^d)$ be a probability measure. Given $h\geq 0,$ define 
\begin{equation}
\label{defleveliso}
\begin{cases}
E_{\mu}( h)=\left\{y\in\supp(\mu) : \ \liminf_{r\to 0}\frac{\log \mu (B(y,r))}{\log r}= h\right\} \\
E_{\mu}(\leq h)=\left\{y\in\supp(\mu) : \ \liminf_{r\to 0}\frac{\log \mu (B(y,r))}{\log r}\leq h\right\} \\ E_{\mu}(<h)=\left\{y\in\supp(\mu) : \ \liminf_{r\to 0}\frac{\log \mu (B(y,r))}{\log r}< h\right\}  \\ \widetilde{E}_{\mu}(h)=\left\{y\in\supp(\mu) : \ \lim_{r\to 0}\frac{\log \mu (B(y,r))}{\log r}=h\right\}.
\end{cases}
\end{equation}
The multifractal spectrum $D_{\mu}$ of $\mu$ is the mapping defined for every $h\geq 0$ by 
\begin{equation}
\label{defDmu}
D_{\mu}(h)=\dim_H E_{\mu}(h).
\end{equation} 
We also define 
\begin{equation}
\begin{cases} D_{\mu}^{\leq}(h)=\dim_H E_{\mu}(\leq h) \\ D_{\mu}^{<}(h)=\dim_H E_{\mu}(<h)  .\end{cases}
\end{equation}

%

For many measures that are defined dynamically, it turns out that the multifractal spectrum coincides with the so-called coarse spectrum. Given $n\in\mathbb{N}$, $h\geq 0$ and $\varepsilon>0,$  define $$N_n(h,\varepsilon)=\#\left\{D\in\mathcal{D}_n : \ \vert D\vert^{h+\varepsilon} \leq\mu(D)\leq \vert D \vert^{h-\varepsilon}\right\}.$$

The multifractal coarse spectrum $G_{\mu}$ of $\mu$ is defined as 
\begin{equation}
G_\mu (h)=\lim_{\varepsilon\to 0}\limsup_{n \to+\infty}\frac{\log N_n(h,\varepsilon)}{n \log 2}.
\end{equation}

\subsection{Mixing properties of ergodic systems}
First, we start this section by mentioning that in this article, an ergodic system will always refer to a couple $(T,\mu)$ where $T$ is measurable and $\mu$ is  a probability measure which is ergodic with respect to $T$. In addition, for the sake of simplicity, one will always assume that $T$ is defined on all $\mathbb{R}^d$, but of course every result presented here readily extends to the case where $T$ is defined on a measurable subset of $\mathbb{R}^d$.

We start by recalling the traditional definition of mixing systems with respect to  classes of observable. 
\begin{definition}
\label{DefDynamicalMix}
Let $(T,\mu)$ be an ergodic system on $\mathbb{R}^d$, let $(\mathcal{F}_1,\vert\vert \cdot \vert\vert_{\mathcal{F}_1}),(\mathcal{F}_2,\vert\vert \cdot \vert\vert_{\mathcal{F}_2})$ be two functional metric space sub-spaces of $L^1(\mu)$ and let $\phi:\mathbb{N}\to \mathbb{R}_+$ be a non increasing mapping with $\phi(n)\to0$. We say that $(T,\mu)$ is $\phi$-mixing with respect to $\mathcal{F}_1,\mathcal{F}_2$ if for every $f\in\mathcal{F}_1$ and $g\in\mathcal{F}_2,$ for every $n\in\mathbb{N},$ one has $$\Big\vert \int f\circ T^n gd\mu -\int fd\mu \int gd\mu\Big\vert\leq \vert\vert f\vert\vert_{\mathcal{F}_1} \times \vert \vert g\vert \vert_{\mathcal{F}_2} \times \phi(n).$$
\end{definition}
A large number of ergodic systems (doubling of the angle, cookie cutters, etc...) satisfies the above property with $\mathcal{F}_1 =L^1(\mu)$ and  $\mathcal{F}_2$ the space of bounded variation functions (B.V. in short) or $\mathcal{F}_1=\mathcal{F}_2=\left\{ f : \ f\text{ is Lipschitz }  \right\}.$
A  weaker analog of such properties, which is more suited to the purpose of the present article and was introduced (in a slightly different fashion) in \cite{LLVZ} is the following. 

\begin{definition}
\label{defphimix}
Let $(T,\mu)$ be an ergodic system of $\mathbb{R}^d$ and let $\phi:\mathbb{N}\to \mathbb{R}_+$ be a non-increasing mapping such that $\phi(n)\to 0$. Let $\mathcal{C}_1,\mathcal{C}_2$ be two collection of Borel sets and let $\gamma\geq 1 $ be a real number. We say that $(T,\mu)$ is $\phi$-mixing with respect $(\mathcal{C}_1,\mathcal{C}_2,\gamma)$   if for  $A\in \mathcal{C}_1$ and every  $B \in\mathcal{C}_2$ we have 
\begin{equation}
\label{equamix}
\mu(A\cap T^{-n}(B)) \leq \gamma\mu(A)\times \mu(B)+ \phi(n)\mu(B).
\end{equation}
In addition, we say that $(T,\mu)$ is $\Sigma$-mixing with respect to $(\mathcal{C}_1,\mathcal{C}_2,\gamma)$ if $$\sum_{n\geq 1}\phi(n)<+\infty.$$
In the case where $\mathcal{C}_1=\left\{B(x,r),x\in\mathbb{R}^d,r>0\right\}$, $\mathcal{C}_2=\left\{B \in\mathcal{B}(\mathbb{R}^d)\right\}$ and $\gamma=1,$ we simply say that $(T,\mu)$ is $\phi,\Sigma$-mixing.
\end{definition}
\textbf{In the rest of the article, one will always precise when  a dynamical system satisfies a mixing property   with respect to two classes of mappings. For instance, one will say that $(T,\mu)$ is $\phi$-mixing with respect $L^1$ and B.V.  whereas if one only says that $(T,\mu)$ is  $\phi$-mixing, it will refer  to Definition \ref{defphimix}.}

\begin{remark}
\begin{itemize}
\item[•] In \cite{LLVZ}, the $\phi$-mixing property is defined by asking that the following more natural inequality holds for every $A\in\mathcal{C}_1$ and $B\in\mathcal{C}_2$ 
\begin{equation}
\label{Normalmix}
\Big\vert\mu(A\cap T^{-n}(B))- \mu(A)\times \mu(B)\Big\vert\leq  \phi(n)\mu(B).
\end{equation}
In the present paper, for technical reasons, one will need the slightly more general property given by \eqref{equamix}.\medskip
\item[•] The $\phi$-mixing property with respect to $\mathcal{C}_1=\left\{B(x,r),x\in\mathbb{R}^d,r>0\right\}$, $\mathcal{C}_2=\left\{B \in\mathcal{B}(\mathbb{R}^d)\right\}$ is implied (for every $\gamma \geq 1$) by the $\phi$-mixing of $(T,\mu)$ with respect to B.V. and $L^1(\mu).$   In particular, the $\phi$-mixing property where for every $n\in\mathbb{N}$ $\phi(n)=C \beta^n,$ for some $0<\beta<1$ is implied by the exponential mixing  from the space of function with bounded variation against $L^1.$\medskip
\item[•] If for every $\kappa\in \mathbb{R}$ $\phi(n)n^{\kappa}\to 0$, we say that $\phi$ is super-polynomial.
\end{itemize}
\end{remark}

In \cite{Galadim}, the author investigated and established numerous of interesting results regarding the relation between hitting times and local dimension under the hypothesis that the dynamical system is super-polynomially mixing with respect to Lipschitz observable. Here, based on  arguments that are similar to the arguments used in \cite{Galadim}, we achieve more precise estimates under the hypothesis that our ergodic system is $\Sigma$-mixing with respect to bounded variation function and $L^1(\mu)$ mappings. Given $x,y \in \mathbb{R}^d$ and $r>0,$ let us set 
\begin{equation}
\label{DefTaur}
\tau_{r}(x,y)=\inf\left\{n\geq 0 : \ T^{n}(x)\in B(y,r)\right\}.     
\end{equation}

\begin{proposition}
\label{LocdimMix}
Let $(T,\mu)$ be an ergodic system which is $\Sigma$ mixing. Then for every $y$ such that $0<\underline{\dim}(\mu,y)\leq \overline{\dim}(\mu,y)<+\infty,$ for $\mu$-almost every $x$, $$\lim_{r\to 0+}\frac{\log \tau_{r}(x,y)}{\log \mu(B(y,r))} =1. $$  
\end{proposition}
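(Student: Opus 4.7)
The statement amounts to proving that $\log\tau_{r}(x,y)\sim-\log\mu(B(y,r))$ as $r\to 0^{+}$. The strategy is to establish matching almost-sure upper and lower bounds along a geometric-in-measure sequence $r_{k}\downarrow 0$, and interpolate to every $r$ using the monotonicity $r\leq r'\Rightarrow\tau_{r}(x,y)\geq\tau_{r'}(x,y)$. Since $0<\underline{\dim}(\mu,y)\leq\overline{\dim}(\mu,y)<+\infty$, the function $r\mapsto\mu(B(y,r))$ is monotone and decays polynomially; fixing $\theta\in(0,1)$, one selects $r_{k}\downarrow 0$ with $\theta^{k+1}\leq\mu(B(y,r_{k}))\leq\theta^{k}$. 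On each window $[r_{k+1},r_{k}]$ the ratio $-\log\mu(B(y,r))/(-\log\mu(B(y,r_{k})))$ lies in $[1,(k+2)/k]$, which makes any Borel--Cantelli conclusion at the scales $r_{k}$ interpolate losslessly (crucially, a naive dyadic $r_{k}=2^{-k}$ would fail because $\mu$ is not assumed exact-dimensional at $y$).

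\textbf{Lower bound.} Fix $\epsilon>0$. By $T$-invariance of $\mu$ and subadditivity,
$$\mu\bigl(\bigl\{x:\tau_{r_{k}}(x,y)\leq\mu(B(y,r_{k}))^{-1+\epsilon}\bigr\}\bigr)\leq\bigl(\mu(B(y,r_{k}))^{-1+\epsilon}+1\bigr)\mu(B(y,r_{k}))\leq 2\theta^{k\epsilon},$$
which is summable in $k$. The first Borel--Cantelli lemma yields $\tau_{r_{k}}(x,y)>\mu(B(y,r_{k}))^{-1+\epsilon}$ eventually, for $\mu$-a.e.\ $x$. Interpolating and letting $\epsilon\to 0$ gives $\liminf_{r\to 0^{+}}\log\tau_{r}(x,y)/(-\log\mu(B(y,r)))\geq 1$.

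\textbf{Upper bound (where $\Sigma$-mixing enters).} Fix $\epsilon>0$, set $L_{k}=\lfloor\mu(B(y,r_{k}))^{-\epsilon/2}\rfloor$ and $M_{k}=\lfloor\mu(B(y,r_{k}))^{-1-\epsilon/2}\rfloor$, so that $M_{k}L_{k}\leq\mu(B(y,r_{k}))^{-1-\epsilon}$. Write $B_{k}=B(y,r_{k})$ and $S_{k}=\sum_{j=0}^{M_{k}-1}\chi_{B_{k}}\circ T^{jL_{k}}$, so that $\{\tau_{r_{k}}(\cdot,y)>M_{k}L_{k}\}\subset\{S_{k}=0\}$. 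Then $\int S_{k}\,d\mu=M_{k}\mu(B_{k})$. Applying Definition \ref{defphimix} to each off-diagonal pair with $A=B=B_{k}$ (both balls, hence in $\mathcal{C}_{1}\cap\mathcal{C}_{2}$ with $\gamma=1$) gives $\mu(B_{k}\cap T^{-mL_{k}}B_{k})\leq\mu(B_{k})^{2}+\phi(mL_{k})\mu(B_{k})$, whence
$$\int S_{k}^{2}\,d\mu\leq M_{k}\mu(B_{k})\Bigl(1+M_{k}\mu(B_{k})+2\sum_{m\geq 1}\phi(mL_{k})\Bigr).$$
The $\Sigma$-mixing hypothesis together with $L_{k}\to\infty$ ensures $\sum_{m\geq 1}\phi(mL_{k})\leq\sum_{n\geq L_{k}}\phi(n)\to 0$. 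The Paley--Zygmund inequality then yields
$$\mu(S_{k}=0)\leq\frac{1+2\sum_{m\geq 1}\phi(mL_{k})}{1+M_{k}\mu(B_{k})+2\sum_{m\geq 1}\phi(mL_{k})}\lesssim\mu(B_{k})^{\epsilon/2}\leq\theta^{k\epsilon/2},$$
which is again summable. A second Borel--Cantelli application gives $\tau_{r_{k}}(x,y)\leq\mu(B_{k})^{-1-\epsilon}$ eventually for $\mu$-a.e.\ $x$; interpolation and $\epsilon\to 0$ close the bound.

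\textbf{Main obstacle.} The delicate step is the second-moment computation, where the summability of $\phi$ (rather than just $\phi\to 0$) is essential: it is what makes $\sum_{m\geq 1}\phi(mL_{k})$ uniformly small as $k\to\infty$ for the chosen polynomial growth of $L_{k}$. Weaker decay of $\phi$ would leave the cross-term contribution of the same order as the main term and break the Paley--Zygmund estimate, which is consistent with the companion paper's counterexample showing that the analogous conclusion can fail when $\phi$ is only polynomial with exponent $<1$.
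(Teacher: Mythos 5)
Your argument is correct in substance, and it takes a genuinely different route from the paper's at each step. For the lower bound the paper invokes Lemma 3 of \cite{Galadim}, whereas you use only $T$-invariance, a union bound over the first $\mu(B(y,r_k))^{-1+\epsilon}$ iterates, and the first Borel--Cantelli lemma; this is more elementary and self-contained. For the upper bound both proofs are second-moment arguments exploiting $\sum_n\phi(n)<+\infty$: the paper sums $\mu(A_i\cap A_j)$ over all times $1\leq i,j\leq N_r$ and applies the Chung--Erd\"os inequality, while you sample along the lacunary times $jL_k$ and apply Paley--Zygmund; the sparse sampling buys you $\sum_{m\geq1}\phi(mL_k)\to0$ where the paper only keeps the constant $C=\sum_k\phi(k)$, at the harmless cost of replacing $N_r$ by $M_kL_k\leq\mu(B_k)^{-1-\epsilon}$, and both routes end with the same summable bound of order $\mu(B_k)^{c\epsilon}$. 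The third difference is the choice of scales: the paper passes to $r=2^{-k}$ and asserts that the $\liminf$/$\limsup$ along dyadic radii coincide with those along all $r$, which is lossy precisely when $\underline{\dim}(\mu,y)<\overline{\dim}(\mu,y)$; your measure-adapted scales $\mu(B(y,r_k))\approx\theta^k$ address this point explicitly and make the monotone interpolation lossless, so on this point your write-up is more careful than the paper's.

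One caveat: as written, the existence of $r_k$ with $\theta^{k+1}\leq\mu(B(y,r_k))\leq\theta^k$ is not guaranteed, because $r\mapsto\mu(B(y,r))$ may jump over the interval $[\theta^{k+1},\theta^k]$ (spheres may carry mass, and when $\underline{\dim}(\mu,y)<\overline{\dim}(\mu,y)$ the jump ratio need not be bounded). This is repairable entirely inside your scheme: set $r_k:=\sup\{r:\mu(B(y,r))\leq\theta^k\}$, run the union-bound step on the open ball of radius $r_k$ (whose measure is $\leq\theta^k$) and the second-moment step on the closed ball $B(y,r_k)$ (whose measure is $\geq\theta^k$ by right continuity, and is $\leq(2r_k)^{\underline{\dim}(\mu,y)-\epsilon}\lesssim\theta^{ck}$ because $r_k\leq\theta^{k/(\overline{\dim}(\mu,y)+\epsilon)}$, so summability persists), and interpolate over the windows $(r_{k+1},r_k)$ exactly as you do, treating the countably many radii $r_k$ themselves directly. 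With this adjustment the proof is complete, and the final $\epsilon\to0$ should of course be taken along a countable sequence, as in the paper.
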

The proof is done in Section \ref{secproofLocdimMix}.

\section{Statement of the result and preliminaries}
\label{sec-result}

%

\subsection{General theorems}

Before stating our main result, we start by a small lemma.
\begin{lemme}
Let $(T,\mu)$ an ergodic system and $(r_n)_{n\in\mathbb{N}}$ a non increasing sequence of radii. Then there exists $0 \leq s\leq d$ such  that for $\mu$-almost every $x$, $$ dim_H \limsup_{n\to +\infty}B(T^n(x),r_n)=s.$$ 

\end{lemme}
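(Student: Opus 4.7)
The plan is to show the function $F: x \mapsto \dim_H \limsup_{n\to\infty} B(T^n(x), r_n)$ is $\mu$-almost everywhere constant, via a standard invariance-plus-ergodicity argument.

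First, I would verify that $F$ is Borel measurable. For each $s \geq 0$, the $s$-dimensional Hausdorff content $\mathcal{H}^s_\infty(E(x))$ of the random limsup set $E(x) := \bigcap_N \bigcup_{n\geq N} B(T^n(x), r_n)$ is a Borel function of $x$, since $E(x)$ is a countable intersection of countable unions of closed balls whose centers depend measurably on $x$, and the Hausdorff content behaves measurably on such parametrised families. One can then express the sublevel sets of $F$ as $\{F \leq s\} = \bigcap_{t \in \mathbb{Q},\, t > s} \{x : \mathcal{H}^t_\infty(E(x)) = 0\}$.

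The key observation is the pointwise inequality $F(Tx) \geq F(x)$ for every $x$. Indeed, setting $m = n+1$:
\begin{align*}
\limsup_{n \to \infty} B(T^n(Tx), r_n) &= \limsup_{m \geq 1} B(T^m(x), r_{m-1}) \\
&\supseteq \limsup_{m \geq 1} B(T^m(x), r_m) = \limsup_{m \to \infty} B(T^m(x), r_m),
\end{align*}
where the inclusion uses that $(r_n)$ is non-increasing, so $B(T^m(x), r_{m-1}) \supseteq B(T^m(x), r_m)$, and the last equality uses that a limsup is insensitive to finitely many initial terms. Since Hausdorff dimension is monotone under inclusion, this gives $F(Tx) \geq F(x)$.

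Now, $F$ is bounded (taking values in $[0, d]$) and $\mu$ is $T$-invariant, so $\int F\circ T\, d\mu = \int F\, d\mu$. Combined with the pointwise inequality $F\circ T - F \geq 0$, this forces $F\circ T = F$ $\mu$-almost everywhere. By ergodicity of $(T,\mu)$, every measurable $T$-invariant (mod $\mu$) function is $\mu$-almost everywhere constant; hence there exists $s \in [0, d]$ with $F = s$ $\mu$-a.e., which is the desired claim. The only mildly delicate point is the Borel measurability of $F$, which is a standard fact about Hausdorff dimensions of measurably parametrised limsup families; every other step is routine, so no serious obstacle is expected.
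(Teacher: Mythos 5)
Your argument is correct, and it rests on the same key observation as the paper's proof: since $(r_n)$ is non-increasing, $\limsup_n B(T^n(Tx),r_n)\supseteq\limsup_n B(T^n(x),r_n)$, so the quantity of interest is non-decreasing along orbits. Where you differ is in how this monotonicity is converted into almost-sure constancy. The paper works with the Hausdorff measures $\mathcal{H}^t$ for each exponent $t$: it applies Poincar\'e recurrence and ergodicity to the (orbit-monotone) maps $x\mapsto\mathcal{H}^t(\limsup_n B(T^n(x),r_n))$, and then defines the almost-sure dimension $s$ as a critical exponent $\inf\{t:\int\mathcal{H}^t(\cdot)\,d\mu<+\infty\}$; this forces it to fuss over finiteness/integrability of $\mathcal{H}^t$. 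You instead apply the standard trick directly to the bounded function $F=\dim_H\in[0,d]$: $T$-invariance of $\mu$ gives $\int(F\circ T-F)\,d\mu=0$, which together with $F\circ T\geq F$ pointwise yields $F\circ T=F$ $\mu$-a.e., and ergodicity then gives constancy. This is cleaner and avoids the $\mathcal{H}^t$ bookkeeping entirely. The one point both treatments pass over quickly is measurability: the paper only records joint measurability of $(x,y)\mapsto\chi_{\limsup_n B(T^n(x),r_n)}(y)$ and then asserts measurability of $x\mapsto\mathcal{H}^t(\cdot)$, while you assert measurability of $x\mapsto\mathcal{H}^t_\infty(E(x))$ for the parametrised limsup family. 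This is indeed true (and, for the a.e. statement, universal measurability would suffice), but it is the one step in your write-up that would need an actual argument---e.g. via the joint measurability of the indicator and known measurability results for Hausdorff contents of measurably parametrised sets---rather than being dismissed as routine; you are, however, no worse off here than the paper itself.
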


\begin{proof}
For every $n\in\mathbb{N},$ $B(T^{n}(T(x)),r_n)\supset B(T^{n+1}(x),r_{n+1}),$ which yields $$\limsup_{n\to +\infty}B(T^n(T(x)),r_n)\supset \limsup_{n\to +\infty}B(T^n(x),r_n) .$$
Fix $0\leq t \leq d$ and assume  that there exists a Borel set $E_t$ such that $\mu(E_t)>0$ and $M>0$ such that for every $x\in E_t,$ $$\mathcal{H}^t(\limsup_{n\to +\infty}B(T^n(x),r_n))\leq M.$$
Since $x\mapsto \mathcal{H}^t(\limsup_{n\to +\infty}B(T^n(x),r_n))$ increases along orbits, by Poincaré's recurrence theorem, for $\mu$-almost every $x$, $$ \mathcal{H}^t(\limsup_{n\to +\infty}B(T^n(x),r_n))\leq M.$$
In addition, note that, by Poincaré's formula (regarding union of sets), for every $Q\leq N$, the mapping $$(x,y)\mapsto \chi_{\bigcup_{Q\leq i\leq N}B(T^i(x),r_n)}(y) $$
is measurable and so is $$(x,y)\mapsto\lim_{Q\to+\infty}\lim_{N\to+\infty}\chi_{\cup_{Q\leq i\leq N}B(T^i(x),r_n)}(y)=\chi_{\limsup_{n\to +\infty}B(T^n(x),r_n)}(y).$$
Thus,  the mapping $(x,y)\mapsto \mathcal{H}^t(\limsup_{n\to +\infty}B(T^n(x),r_n))\in L^1(\mu)$ and is increasing along orbits. We conclude  that, by ergodicity, it is almost surely constant. Noticing that either $t$ is in the case mentioned above, either almost surely
$$\mathcal{H}^s(\limsup_{n\to +\infty}B(T^n(x),r_n))=+\infty,$$ we can define $$s=\inf\left\{t: \int \mathcal{H}^t(\limsup_{n\to +\infty}B(T^n(x),r_n))d\mu(x)<+\infty\right\}.$$
Writing $E_s=\bigcap_{t\in\mathbb{Q}\cap(s,d)}F_t$ yields the claim.

\end{proof}

As mentioned in the introduction, it was established in \cite{JJMS} that, in the case of random coverings, when the measure $\mu$ is exact dimensional, then the almost sure  dimension of the corresponding random limsup set can be computed in terms of a critical exponent. We establish here the same result in the case of $\Sigma$-mixing systems.

Our main result is the following one.
\begin{theoreme}
\label{mainthm}
Let $(T,\mu)$ be a $\Sigma$-mixing ergodic  system such that $\mu$ is exact-dimensional. Let  $\underline{r}=(r_n)_{n\in\mathbb{N}}\in\mathbb{R}_+^{\mathbb{N}}$ be a non increasing sequence such that
\begin{equation}
\label{defsr}
s_{\underline{r}}:=\inf\left\{s:\sum_{n\geq 1}r_n^s<+\infty\right\} \leq \dim \mu.
\end{equation}
 
Then, writing for $x\in X$ 
\begin{equation}
\label{defExr}
E(x,\underline{r})=\left\{y:\vert\vert y-T^n(x) \vert\vert_{\infty}\leq r_n \text{ i.o. }\right\},
\end{equation}
one has for $\mu$-almost every $x$, $$\dim_H E(x,\underline{r})=s_{\underline{r}}.$$

\end{theoreme}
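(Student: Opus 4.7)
The plan is to prove matching upper and lower bounds; the preliminary lemma above already ensures that $\dim_H E(x,\underline{r})$ is $\mu$-a.s.\ constant, so it suffices to compute its value. The upper bound is purely metric and holds for every $x$, while the lower bound combines a Kochen--Stone style Borel--Cantelli using the $\Sigma$-mixing assumption with a mass transference principle adapted to the exact-dimensional measure $\mu$.

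\textbf{Upper bound.} Fix $s>s_{\underline{r}}$, so that $\sum_n r_n^s<+\infty$. Since $E(x,\underline{r})\subset \bigcup_{n\geq N}B(T^n(x),r_n)$ for every $N\in\mathbb{N}$, the $s$-Hausdorff content of $E(x,\underline{r})$ satisfies
$$\mathcal{H}^s_\infty(E(x,\underline{r}))\leq \sum_{n\geq N}(2r_n)^s\xrightarrow[N\to+\infty]{}0.$$
Hence $\dim_H E(x,\underline{r})\leq s_{\underline{r}}$ for every $x$.

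\textbf{Lower bound, quantitative Borel--Cantelli.} Write $d:=\dim\mu$ and pick $s_1<s_2<s_{\underline{r}}$ with $\sum_n r_n^{s_2}=+\infty$. Fix $\varepsilon>0$ so small that $s_1(d+\varepsilon)/d\leq s_2$. By exact-dimensionality there exists a compact set $K_\varepsilon\subset\supp\mu$ with $\mu(K_\varepsilon)\geq 1-\varepsilon$ and $r_0>0$ such that $r^{d+\varepsilon}\leq\mu(B(y,r))\leq r^{d-\varepsilon}$ for all $y\in K_\varepsilon$ and $r\leq r_0$. Define $R_n:=r_n^{s_1/d}$. Then for $y\in K_\varepsilon$ and $n$ large, $\mu(B(y,R_n))\geq R_n^{d+\varepsilon}=r_n^{s_1(d+\varepsilon)/d}\geq r_n^{s_2}$, and in particular $\sum_n \mu(B(y,R_n))=+\infty$. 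Fixing $y\in K_\varepsilon$ and setting $A^y_n:=T^{-n}(B(y,R_n))$, $T$-invariance of $\mu$ together with the $\Sigma$-mixing hypothesis applied to $B(y,R_m)\in\mathcal{C}_1$ and $B(y,R_n)\in\mathcal{C}_2$ give, for $m<n$,
$$\mu(A^y_m\cap A^y_n)\leq \mu(B(y,R_m))\mu(B(y,R_n))+\phi(n-m)\mu(B(y,R_n)).$$
Setting $S_N:=\sum_{n\leq N}\mu(A^y_n)$ and $C:=\sum_{k\geq 1}\phi(k)<+\infty$, summation leads to
$$\sum_{m,n\leq N}\mu(A^y_m\cap A^y_n)\leq S_N^2+(2C+1)S_N.$$
Since $S_N\to+\infty$, the Kochen--Stone lemma yields $\mu(\limsup_n A^y_n)=1$. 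Noting that $x\in A^y_n$ iff $y\in B(T^n(x),R_n)$ and applying Fubini to $\mu\otimes\mu$,
$$\int \mu\Bigl(\limsup_{n\to+\infty}B(T^n(x),R_n)\Bigr)\,d\mu(x)\geq \mu(K_\varepsilon)\geq 1-\varepsilon.$$
The map $x\mapsto \mu(\limsup_n B(T^n(x),R_n))$ is non-decreasing along $T$-orbits, hence $\mu$-a.s.\ constant by ergodicity, so it equals a constant $\geq 1-\varepsilon>0$ for $\mu$-a.e.\ $x$.

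\textbf{Mass transference.} It remains to convert this positive-mass estimate for the enlarged radii $R_n=r_n^{s_1/d}$ into a dimension lower bound for the original radii $r_n=R_n^{d/s_1}$. One invokes a mass transference principle for exact-dimensional measures, carried out on the Frostman-type set $K_\varepsilon$ (in the spirit of Beresnevich--Velani's MTP, with $\mu$ replacing Lebesgue measure): the positive $\mu$-measure of $\limsup_n B(T^n(x),R_n)$ implies
$$\dim_H \limsup_{n\to+\infty} B\bigl(T^n(x),R_n^{d/s_1}\bigr)\geq s_1,$$
i.e.\ $\dim_H E(x,\underline{r})\geq s_1$. Letting $s_1\uparrow s_{\underline{r}}$ concludes.

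\textbf{Main obstacle.} The crucial step is the mass transference principle for the merely exact-dimensional (not Ahlfors-regular) measure $\mu$: one has to execute the Frostman--Cantor-like construction on the set $K_\varepsilon$ where $\mu$ behaves almost like a $d$-Ahlfors measure, while absorbing the $\varepsilon$-loss in the exponents, which is precisely why the three scales $s_1<s_2<s_{\underline{r}}$ and the set $K_\varepsilon$ must be set up carefully. By contrast, the Kochen--Stone/Fubini step is direct once the $\Sigma$-mixing hypothesis is invoked as above, and the upper bound is immediate from the definition of $s_{\underline{r}}$.
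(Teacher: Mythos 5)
Your upper bound and your Kochen--Stone/Fubini step are fine (the latter is essentially the paper's Lemma \ref{thmLLVZ}), but the mass transference step is a genuine gap, and it is exactly the point the whole proof of Theorem \ref{mainthm} is built to overcome. You invoke ``a mass transference principle for exact-dimensional measures, in the spirit of Beresnevich--Velani with $\mu$ replacing Lebesgue,'' i.e.\ that positive $\mu$-measure of $\limsup_n B(T^n(x),R_n)$ forces $\dim_H\limsup_n B(T^n(x),R_n^{d/s_1})\geq s_1$. As stated, this principle is false for measures that are merely exact-dimensional: the paper notes explicitly (footnote in the section preceding Theorem \ref{stronfullmes}) that the uniform bound $\dim\mu/\delta$ fails for general sequences of balls already when $\mu$ is a self-affine measure on a Bedford--McMullen carpet, which is precisely the class of examples the theorem is meant to cover. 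Passing to the Frostman set $K_\varepsilon$ does not repair this: your covering statement only says that $\mu$-a.e.\ $y$ lies in infinitely many balls $B(T^n(x),R_n)$, with no control on \emph{which} $n$ realize the covering, and in particular no guarantee that the centers $T^n(x)$ lie in (or have positive density of) $K_\varepsilon$. For a shrunk ball $B(T^n(x),r_n)$ centered outside the good set, $\mu(B(T^n(x),r_n))$ and even $\mu(B(T^n(x),r_n)\cap K_\varepsilon)$ can be far too small for any Cantor-type construction, so the transference cannot be executed there.

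The paper's route is to replace your asserted MTP by the mass transference principle for finite measures of \cite{ED3} (Theorem \ref{zzani}), whose hypotheses are (i) a \emph{full}-measure statement $\mu(\limsup\frac12 B_n)=1$ (not just measure $\geq 1-\varepsilon$) and (ii) a lower bound $\mathcal H^{\mu,s}_{\infty}(\widering{B_n}^{\delta})\geq\mu(B_n)$ on the $\mu$-essential Hausdorff content of the shrunk balls. Verifying (ii) requires the balls to be centered at points where $\mu$ satisfies two-sided Frostman bounds \emph{and} where the good set has density $\geq\frac12$ (Besicovitch density, Lemmas \ref{densibesi} and \ref{doublebesi}); guaranteeing that infinitely many such balls occur, and that they still cover a full-measure set, is exactly why the paper proves Theorem \ref{stronfullmes}, the covering statement for the induced system $(T_A,\mu_A)$ on every positive-measure set $A$, i.e.\ along the return times of the orbit to the good set. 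These ingredients --- the induced-map full-measure theorem, the density refinement, and the essential-content estimate of Lemma \ref{exaesticont} --- constitute the bulk of the argument and are missing from your proposal; you have correctly identified where the obstacle sits, but the step you leave to a black-box MTP is precisely the part that has no off-the-shelf substitute.
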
 

\begin{remark}
\begin{itemize}
\item[•] In the case where $s_{\underline{r}}>\alpha$, one expect  that the for typical $x$,  $$\dim_H E(x,\underline{r})>\dim \mu$$
but, as shown in \cite{FanSchmTrou,LS}, the formula (if it exists) probably involves the multifractal spectrum of $\mu$ and has to be dealt with in a different fashion. \medskip
\item[•] If $\mu$ is  absolutely continuous with respect to the Lebesgue measure, then for every non-increasing sequence of radii $(r_n)_{n\in\mathbb{N}}$, for $\mu$-almost every $x$, $$\dim_H E(x,\underline{r})=\min\left\{d,s_{\underline{r}}\right\}.$$
\end{itemize}
\end{remark}
Given a non increasing sequence of radii $(r_n)_{n\in\mathbb{N}}$, even in the case of random approximation, i.e. when one consider a sequence of i.i.d. random variables $(X_n)_{n\in\mathbb{N}}$ instead of orbits $(T^n(x))_{n\in\mathbb{N}}$, it was proved in \cite{JJMS} that, in the case where $\mu$ is not exact-dimensional, no general formula (involving only quantities one usually estimates) holds regarding the Hausdorff dimension of the sets $$\dim_H \left\{y : \ d(X_n,y)\leq r_n \text{ i.o. }\right\}.$$
However, in the specific case where $(r_n =\frac{1}{n^{\delta}})_{n\in\mathbb{N}}$ with $\delta \geq  \frac{1}{\overline{\dim}_H \mu},$ the authors were able to prove that the same estimates as in the exact-dimensional case still holds. Our second result shows that this result remains true in the case of super-polynomially mixing ergodic systems.
\begin{theoreme}
\label{MainthmNonexa}
Let $(T,\mu)$ be an ergodic system such that $\overline{\dim}_H \mu >0.$ If $(T,\mu)$ is super-polynomially mixing with respect to $(\left\{B(x,r),x \in  \supp(\mu),r>0\right\},\mathcal{B}(\mathbb{R}^d),1),$  then, for every $\delta\geq \frac{1}{\overline{\dim}_H \mu},$ for $\mu$-almost every $x$, one has \begin{equation}
\dim_H \left\{y : \ \vert\vert T^{n}(x)-y\vert\vert_{\infty}\leq \frac{1}{n^{\delta}}\text{ i.o. }\right\}=\frac{1}{\delta}.
\end{equation}
\end{theoreme}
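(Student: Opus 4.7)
The upper bound is routine: for $s > 1/\delta$ the family $\{B(T^n x, n^{-\delta})\}_{n \geq N}$ covers the limsup set by balls of diameter at most $2 N^{-\delta}$, giving $\mathcal{H}^s_{2N^{-\delta}}(\limsup_n B(T^n x, n^{-\delta})) \leq 2^s \sum_{n \geq N} n^{-\delta s} \to 0$ as $N \to \infty$ since $\delta s > 1$. Hence $\dim_H \leq 1/\delta$.

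For the lower bound, fix $s < 1/\delta$. The hypothesis $1/\delta \leq \overline{\dim}_H \mu = \mathrm{ess\,sup}_\mu \underline{\dim}_{\mathrm{loc}}(\mu,\cdot)$ lets me choose a Borel set $F \subset \supp(\mu)$ with $\mu(F) > 0$ on which $\mu(B(y, r)) \leq r^s$ uniformly for all $y \in F$ and $r \leq r_0$; the mass distribution principle then gives $\dim_H F \geq s$. If, for $\mu$-a.e.\ $x$, a $\mu|_F$-positive subset of $F$ lies in $\limsup_n B(T^n x, n^{-\delta})$, then $\dim_H \limsup_n B(T^n x, n^{-\delta}) \geq s$, and letting $s \nearrow 1/\delta$ concludes.

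The crux is thus a dynamical second Borel--Cantelli lemma obtained from the super-polynomial mixing. Applying Definition \ref{defphimix} with $A_n = B(y, n^{-\delta})$ produces $\mu(T^{-n}A_n \cap T^{-m}A_m) \leq \mu(A_n)\mu(A_m) + \phi(|m-n|)\mu(A_{n\vee m})$, whence a variance bound $\mathrm{Var}\bigl(\sum_{n\leq N} \chi_{A_n}\circ T^n\bigr) \lesssim \sum_n \mu(A_n) + \sum_{n\neq m} \phi(|n-m|) \mu(A_{n\vee m})$. A Paley--Zygmund argument then turns $\sum_n \mu(A_n) = +\infty$ into $\{n : T^n x \in A_n\}$ being infinite for $\mu$-a.e.\ $x$, and a Fubini swap reduces the problem to showing $\sum_n \mu(B(y, n^{-\delta})) = +\infty$ for $\mu|_F$-a.e.\ $y$.

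This divergence is the main obstacle, since the Frostman bound on $F$ only provides an \emph{upper} bound on $\mu(B(y, n^{-\delta}))$. I overcome it by refining $F$ to $F \cap \{y : \underline{\dim}_{\mathrm{loc}}(\mu, y) \in [s, s+\eta]\}$ for a small $\eta > 0$: it is still of positive $\mu$-measure when $s$ is chosen just below $1/\delta$, and the $\liminf$ definition of lower local dimension produces, at every such $y$, a sequence $r_k(y) \to 0$ with $\mu(B(y, r_k)) \geq r_k^{s + 2\eta}$. Matching each $r_k$ with the nearest $n_k^{-\delta}$ causes only a bounded multiplicative distortion, so $\sum_k \mu(B(y, n_k^{-\delta})) \geq c \sum_k n_k^{-\delta(s + 2\eta)} = +\infty$ provided $\delta(s + 2\eta) < 1$. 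A density argument on the scales (the sequence $(n^{-\delta})$ is logarithmically dense) ensures the extracted $(n_k)$ can be taken with positive relative density, securing the divergence. Letting $\eta \to 0$ and $s \to 1/\delta$ then completes the lower bound.
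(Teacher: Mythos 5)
There is a genuine gap, and it sits at the heart of the theorem. Your lower bound hinges on finding a positive-$\mu$-measure set of points $y$ that lie in $\limsup_n B(T^n(x),n^{-\delta})$, which via Fubini and the second Borel--Cantelli argument requires $\sum_n \mu(B(y,n^{-\delta}))=+\infty$ for those $y$. To secure this divergence you refine $F$ to $F\cap\{y:\underline{\dim}_{\mathrm{loc}}(\mu,y)\in[s,s+\eta]\}$ with $\delta(s+2\eta)<1$ and assert it still has positive measure. That assertion is false in general: $\overline{\dim}_H\mu=\mathrm{ess\,sup}_\mu\,\underline{\dim}_{\mathrm{loc}}\geq 1/\delta$ only guarantees that $\{\underline{\dim}_{\mathrm{loc}}>t\}$ has positive measure for every $t<1/\delta$; it says nothing about bands strictly below $1/\delta$. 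In the central cases the theorem is meant to cover --- e.g.\ $\mu$ exact-dimensional with $\dim\mu\geq 1/\delta$, such as Lebesgue measure with $\delta>1/d$, or Gibbs/self-affine measures with $\delta>1/\dim\mu$ --- almost every $y$ satisfies $\underline{\dim}_{\mathrm{loc}}(\mu,y)\geq 1/\delta$, every such band is $\mu$-null, and in fact $\sum_n\mu(B(y,n^{-\delta}))<+\infty$ for $\mu$-a.e.\ $y$. Then the \emph{first} Borel--Cantelli lemma shows that $\limsup_n B(T^n(x),n^{-\delta})$ is $\mu$-null for a.e.\ $x$, so no positive-measure subset of it exists and no Frostman-type mass distribution argument on a set of positive $\mu$-measure can ever produce the lower bound $1/\delta$. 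Your mixing-based variance/Paley--Zygmund step (essentially the paper's Chung--Erd\H{o}s argument in Lemma \ref{thmLLVZ}) is fine as far as it goes, but its divergence hypothesis simply fails where it matters.

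This is precisely why the paper's proof of Theorem \ref{MainthmNonexabis} takes a different route: it first builds a positive-measure set $A$ with controlled local dimensions and null ball boundaries (Lemma \ref{Lemmarestri}), proves a \emph{full-measure} covering statement not for the balls $B(T^n(x),n^{-\delta})$ themselves but for enlarged balls $B(T^n(x),\tfrac12 n^{-\theta_n(x)})$ whose $\mu_A$-measure is of order $1/n$ (so the dynamical Borel--Cantelli applies), and simultaneously bounds from below the $\mu_A$-essential Hausdorff content of the small balls $B(T^n(x),n^{-\delta})$. The mass transference principle for finite measures (Theorem \ref{zzani}) then converts this into $\dim_H\limsup_n B(T^n(x),n^{-\delta})\geq s_\varepsilon\approx 1/\delta$ without ever asserting that the small-ball limsup set carries $\mu$-mass. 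Some such transference mechanism (or an equivalent construction of an auxiliary Frostman measure supported on the limsup set) is unavoidable here, and it is the missing ingredient in your proposal; the upper bound and the Borel--Cantelli machinery you set up are correct but only reproduce the easy parts of the argument.
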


\begin{remark}
If $(T,\mu)$ is super-polynomially mixing from B.V. against $L^1(\mu)$ and $\overline{\dim}_H \mu >0$, then $(T,\mu)$ satisfies the conditions of the above theorem. In addition, with a little more work, one could show that the  Theorem \ref{MainthmNonexa} actually holds under the slightly weaker assumption that there exists $\gamma\geq 1,$ a measurable set $E$ with $\mu(E)=1$ and a measurable mapping $\rho:\mathbb{R}^d \to \mathbb{R}_+$ such that, writing $\mathcal{C}_1 =\left\{B(y,r):0<r\leq \rho(y),y\in E\right\},$  $(T,\mu)$  is $(\mathcal{C}_1,\mathcal{B}(\mathbb{R}^d),\gamma)$-super-polynomially mixing. 
\end{remark}

In the case where $\delta<\frac{1}{\overline{\dim}_H \mu},$ the proof done in the random case in \cite{EP} together with Proposition \ref{LocdimMix} allows one to extend \cite[Proposition 2]{EP}. 

Given a mapping $g:\mathbb{R}\to \mathbb{R}$ let us denote $\widehat{g}$ the mapping defined for every $x$ by $$\widehat{g}(x)=\inf\left\{h(x),h\geq g \text{ and }  h \text{ is 1-Lipschitz}\right\}.$$

 \begin{proposition}
 \label{thmmultifract}
 Let $(T,\mu)$ be an ergodic $\Sigma$-mixing system. Then, for every $0<\delta<\frac{1}{\overline{\dim}_H \mu}$, one has $$\lim_{t\to \frac{1}{\delta}^{-}}D_{\mu}^{\leq}(t)\leq\dim_H \left\{y : \ \vert\vert T^n(y)-x\vert\vert_{\infty}\leq \frac{1}{n^\delta}\text{ i.o. }\right\}\leq \max\left\{D^{\leq}_\mu(\frac{1}{\delta}),\widehat{G}_{\mu}(\frac{1}{\delta})\right\}.$$ 
 \end{proposition}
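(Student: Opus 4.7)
The plan is to adapt to the dynamical setting the proof of \cite[Proposition 2]{EP} (originally written for random coverings), using Proposition \ref{LocdimMix} as the deterministic replacement for the law-of-large-numbers / Borel--Cantelli input available in the i.i.d.\ case; the proposition provides the asymptotic $\log \tau_r(x,y) = (1+o(1)) \log \mu(B(y,r))$ for $\mu$-almost every $x$, at each $y$ with finite positive local dimensions, which is exactly what we need to translate local mass into hitting-time bounds.

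For the lower bound, I would fix $t < 1/\delta$ with $D_{\mu}^{\leq}(t) > 0$ and choose a Frostman measure $\nu$ supported on $E_\mu(\leq t)\cap\{y : 0 < \underline{\dim}(\mu,y) \leq \overline{\dim}(\mu,y) < \infty\}$ of Hausdorff dimension arbitrarily close to $D_{\mu}^{\leq}(t)$. For each such $y$, pick radii $r_k \to 0$ with $\mu(B(y,r_k)) \geq r_k^{t+\eta}$, which exist since $\underline{\dim}(\mu,y) \leq t$. Proposition \ref{LocdimMix} then gives, for $\mu$-almost every $x$, $n_k := \tau_{r_k}(x,y) \leq r_k^{-(t+2\eta)}$ for $k$ large. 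Choosing $\eta$ so small that $t + 2\eta < 1/\delta$ yields $r_k \leq n_k^{-\delta}$, so $T^{n_k}(x) \in B(y, n_k^{-\delta})$, hence $y \in E(x,\underline{r})$. A Fubini exchange then promotes this to $\nu(E(x,\underline{r})) = 1$ for $\mu$-a.e.\ $x$, whence $\dim_H E(x,\underline{r}) \geq \dim_H \nu$; letting $\dim_H \nu \to D_{\mu}^{\leq}(t)$ and $t \to (1/\delta)^{-}$ concludes.

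For the upper bound, write $E(x,\underline{r}) = A \cup B$ with $A := E(x,\underline{r}) \cap E_\mu(\leq 1/\delta)$ and $B := E(x,\underline{r}) \setminus A$, so that $\dim_H A \leq D_{\mu}^{\leq}(1/\delta)$ is immediate. Every $y \in B$ satisfies $\underline{\dim}(\mu,y) > 1/\delta$, so $\mu(B(y,r)) \leq r^{1/\delta + \eta}$ for small $r$. Following \cite{SeurMarkov,EP}, I would cover $B$ by dyadic cubes $Q \in \mathcal{D}_n$ grouped according to the size of $\mu(Q)$: cubes with $\mu(Q) \approx 2^{-ns}$ for $s$ near $1/\delta$ are counted by $N_n(s,\ep)$, while cubes with significantly larger $s$ are rarely visited by the orbit because the expected number of $n \leq N$ with $T^n(x) \in Q$ is summable in $N$. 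The $\Sigma$-mixing summability promotes this expectation bound to an almost-sure bound via a second-moment Borel--Cantelli argument, yielding a cover of $B$ whose Hausdorff-gauge sum is controlled by $\widehat{G}_\mu(1/\delta)$; the $1$-Lipschitz regularization appears naturally when one interpolates the cube-counting bound across neighbouring dyadic scales.

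The main obstacle is the upper bound for $B$: one needs a covering estimate that is uniform in $y$ while only weak correlations among the events $\{T^n(x) \in Q\}$ are available. The $\Sigma$-mixing assumption in Definition \ref{defphimix} supplies precisely the summable correlation bound required to replicate, without independence, the Borel--Cantelli-based covering argument of \cite{EP}; the passage from $G_\mu$ to $\widehat{G}_\mu$ absorbs the possible discontinuity of the coarse spectrum when one converts a cube-counting bound into a Hausdorff-dimension bound.
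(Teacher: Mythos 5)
Your proposal follows essentially the same route as the paper: the lower bound is obtained exactly as in Section 5, by putting a Frostman measure on $E_{\mu}(\leq t)$, converting Proposition \ref{LocdimMix} into the hitting estimate $T^{n_k}(x)\in B(y,n_k^{-\delta})$ for $\mu$-almost every $x$, and exchanging the order of integration by Fubini, while the upper bound is, as in the paper, delegated to the covering arguments of \cite{SeurMarkov,EP} (decomposition along $E_{\mu}(\leq \frac{1}{\delta})$ and dyadic-cube counting via the coarse spectrum), which the paper simply invokes after observing that those proofs never use independence. Two small inaccuracies in your upper-bound heuristic are worth noting: the expected number of $n\leq N$ with $T^n(x)\in Q$ equals $N\mu(Q)$ by invariance and is not summable in $N$ --- the correct mechanism is Markov's inequality plus Borel--Cantelli along dyadic values of $N$, giving the almost-sure bound $N^{1+\varepsilon}\mu(Q)$ --- and consequently no $\Sigma$-mixing (nor any second-moment argument) is actually needed for the upper bound: identical distribution of the points $T^n(x)$ under $\mu$ suffices, which is precisely the point of the paper's footnote.
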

Notice that  both sides coincides as soon as $D_{\mu}^{\leq}(\cdot)$ is concave continuous and $D_{\mu}^{\leq}(\cdot)=G_{\mu}(\cdot).$  Such condition is for instance met for self-similar measure satisfying the so-called open set condition \cite{Olsen98}. 

 We isolate here the following result regarding the upper-bound, which follows directly from the proof technique used to establish \cite[Proposition 2]{EP}. 
\begin{proposition}
\label{PropoMultiBed}
 Let $\mu \in \mathcal{M}(\mathbb{R}^d)$ be an exact-dimensional measure. Assume that for every $h\geq 0,$ and $\varepsilon>0,$ there exists $m\in\mathbb{N}$, a constant $C>0$ and a sequence of balls  $(B_{n,k})_{n\in\mathbb{N},k\in Y(h,\varepsilon,n)}$ such that
\smallskip
\begin{itemize}
\item[(1)] for every $n\in\mathbb{N}$, for every  $k\in Y(h,\varepsilon,n),$ one has $$C^{-1}\leq\frac{\vert B_{n,k} \vert}{m^{-n}}\leq C,$$
\item[(2)]  for every $n\in\mathbb{N}$, for every  $k\in Y(h,\varepsilon,n),$ $$\#\left\{k' \in Y(h,\varepsilon,n): \ B_{n,k'}\cap B_{n,k}\neq \emptyset\right\}\leq C,$$
\item[(3)] one has $$E_{\mu}(h)\subset \limsup_{ k \in Y(h,\varepsilon,n),n\to+\infty} C B_{n,k},$$
\item[(4)] Defining $$\overline{G}_{\mu}(h)=\lim_{\varepsilon\to 0}\limsup_{n\to +\infty}\frac{\log \# Y(h,\varepsilon,n)}{m\log n},$$
the mapping $h\mapsto \overline{G}_{\mu}(h)$ is concave and satisfies $G_{\mu}(\dim \mu)=\dim(\mu)$.
\end{itemize}
Then, for every sequence of random variables $(X_n)_{n\in\mathbb{N}}$ of common law $\mu$ (non necessarily independent), almost surely,   for every $\delta< \frac{1}{\dim \mu}$, one has $$\dim_H \limsup_{n\to +\infty}B(X_n,\frac{1}{n^\delta})\cap \bigcup_{h>\frac{1}{\delta}}E_{\mu}(h)\leq \overline{G}_{\mu}(\frac{1}{\delta}).$$
\end{proposition}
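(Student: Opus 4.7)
The plan is to transpose the first-moment covering argument of \cite[Proposition 2]{EP}; independence of the $(X_n)$ plays no role there, only that each $X_n$ has law $\mu$, so the same scheme applies. Fix $\delta < 1/\dim(\mu)$, a small $\varepsilon > 0$, and $h > 1/\delta$; write $\alpha = \dim(\mu)$ and $\Omega_h = \limsup_{j} B(X_j, j^{-\delta}) \cap E_{\mu}(h)$. I aim to show $\dim_H \Omega_h \leq \overline{G}_{\mu}(h) + 1/\delta - h$ almost surely, and then deduce the claimed bound $\overline{G}_{\mu}(1/\delta)$ by concavity.

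\textbf{Scale matching and first moment.} Partition the indices by $J_n = \{ j \in \mathbb{N} : j^{-\delta} \in (m^{-(n+1)}, m^{-n}] \}$, so $|J_n| \asymp m^{n/\delta}$; by (1), for $j \in J_n$ the radius $j^{-\delta}$ is comparable to the diameter of any $B_{n,k}$. Call a pair $(n,k)$ with $k \in Y(h,\varepsilon,n)$ \emph{selected} if $X_j \in (C+2) B_{n,k}$ for some $j \in J_n$, and write $S_n$ for the set of selected indices at level $n$. Using (3) and the matching of radii, every $y \in \Omega_h$ belongs to $(2C+3) B_{n,k}$ for infinitely many selected $(n,k)$, so $\{(2C+3) B_{n,k} : k \in S_n,\ n \geq N\}$ covers $\Omega_h$ at arbitrarily fine scales. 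A union bound then gives
\[
\mathbb{E}[\# S_n] \leq \# Y(h,\varepsilon,n) \cdot |J_n| \cdot \max_{k} \mu((C+2) B_{n,k}).
\]
After restricting, via a preliminary Borel--Cantelli decomposition along scales, to those balls satisfying $\mu((C+2) B_{n,k}) \leq m^{-n(h-\varepsilon)}$---and checking these still cover $E_{\mu}(h)$ at the scales where the local dimension is effectively realized---one obtains $\mathbb{E}[\# S_n] \lesssim m^{n(\overline{G}_{\mu}(h) + 1/\delta - h + O(\varepsilon))}$. Fubini plus Markov then give, for every $s > \overline{G}_{\mu}(h) + 1/\delta - h$, $\mathcal{H}^s(\Omega_h) < +\infty$ almost surely, hence $\dim_H \Omega_h \leq \overline{G}_{\mu}(h) + 1/\delta - h$.

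\textbf{Optimization over $h$.} The countable decomposition $\bigcup_{h > 1/\delta} E_{\mu}(h) = \bigcup_{h \in \mathbb{Q} \cap (1/\delta, +\infty)} E_{\mu}(h)$ together with countable stability of $\dim_H$ reduces the proposition to the deterministic inequality $\sup_{h > 1/\delta} \{ \overline{G}_{\mu}(h) + 1/\delta - h \} \leq \overline{G}_{\mu}(1/\delta)$. Since $\overline{G}_{\mu}$ is concave with $\overline{G}_{\mu}(\alpha) = \alpha$, and is bounded above by the identity (being a coarse covering exponent for $E_{\mu}(h)$, whose Hausdorff dimension is at most $h$), its right-derivative at $\alpha$ is at most $1$. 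Hence $h \mapsto \overline{G}_{\mu}(h) - h$ is non-increasing on $[\alpha, +\infty) \supset [1/\delta, +\infty)$, so its supremum on $(1/\delta, +\infty)$ equals $\overline{G}_{\mu}(1/\delta) - 1/\delta$, as required.

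\textbf{Main obstacle.} The delicate point is the measure bound $\mu((C+2)B_{n,k}) \leq m^{-n(h-\varepsilon)}$: assumption (3) only guarantees that $C B_{n,k}$ covers each $y \in E_{\mu}(h)$ infinitely often, which controls the measure of the enlargements only along a point-dependent subsequence of scales, not uniformly in $n$ or $k$. The resolution is the same device as in \cite{EP}: decompose $E_{\mu}(h)$ according to the first scale at which $\mu(B(y, 2Cm^{-n})) \leq m^{-n(h-\varepsilon)}$ holds, restrict the cover to those ``good'' pairs, and verify that the restricted cover still sees all of $E_{\mu}(h)$ up to a $\dim_H$-negligible remainder.
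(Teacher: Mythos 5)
Your overall strategy (a first-moment covering estimate at each scale, Fubini--Markov, then a concavity optimization, noting that only the common law of the $X_n$ is used) is the right one, and it is indeed the technique the paper invokes by citation of \cite[Proposition 2]{EP} rather than proving here. But as written there is a genuine gap at the covering step. Hypothesis (3) is only a \emph{limsup} covering: a point $y\in\Omega_h$ lies in $CB_{n,k}$ for infinitely many levels $n$, and lies in some $B(X_j,j^{-\delta})$ with $j\in J_n$ for infinitely many levels $n$, but nothing forces these two infinite sets of levels to intersect even once beyond any given $N$. Hence your claim that ``every $y\in\Omega_h$ belongs to $(2C+3)B_{n,k}$ for infinitely many selected $(n,k)$'' does not follow: your selection rule pairs the family ball and the random hit \emph{at the same level} $n$, which is exactly what the limsup in (3) fails to provide; and if you relax the rule to allow hits at other levels, the expected count of selected pairs is no longer $\lesssim \#Y(h,\varepsilon,n)\,|J_n|\,\max_k\mu((C+2)B_{n,k})$. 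Your ``Main obstacle'' paragraph addresses a different (and correctly identified) issue, the measure bound on the enlarged balls, not this synchronization of scales. The standard way out --- and the one behind the cited technique of \cite{SeurMarkov,EP} --- is to cover by the \emph{random} balls themselves: for $y$ with $\liminf$-local dimension $\geq q>1/\delta$ the one-sided bound $\mu(B(y,r))\leq r^{q-\varepsilon}$ holds at \emph{all} sufficiently small scales, so after decomposing by the first good scale one may keep only those $B(X_j,j^{-\delta})$ whose $\mu$-measure is $\leq (2j^{-\delta})^{q-\varepsilon}$, and count those by sorting cubes at scale $m^{-n}$ according to their measure. Relatedly, your reduction $\bigcup_{h>1/\delta}E_{\mu}(h)=\bigcup_{h\in\mathbb{Q}\cap(1/\delta,\infty)}E_{\mu}(h)$ is false (irrational levels are lost); the correct countable decomposition is $\bigcup_{h>1/\delta}E_{\mu}(h)=\bigcup_{q\in\mathbb{Q},\,q>1/\delta}\{y:\liminf_{r\to0}\log\mu(B(y,r))/\log r\geq q\}$, which is precisely the form for which the one-sided, all-scales argument works.

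A second, smaller gap sits in the optimization: to get $\sup_{h>1/\delta}\{\overline{G}_{\mu}(h)+1/\delta-h\}\leq\overline{G}_{\mu}(1/\delta)$ you need $h\mapsto\overline{G}_{\mu}(h)-h$ non-increasing past $1/\delta$, which (given concavity and $\overline{G}_{\mu}(\dim\mu)=\dim\mu$) requires $\overline{G}_{\mu}(1/\delta)\leq 1/\delta$, i.e.\ $\overline{G}_{\mu}\leq\mathrm{id}$. This is not among hypotheses (1)--(4), and your justification is backwards: a covering exponent of a family covering $E_{\mu}(h)$ bounds $\dim_H E_{\mu}(h)$ from above, but is not itself bounded by $h$, since the family may be redundantly large. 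In the intended applications it does hold (the balls counted carry measure at least comparable to $|B_{n,k}|^{h+\varepsilon}$, so $\#Y(h,\varepsilon,n)\lesssim m^{n(h+\varepsilon)}$), but it must be stated or derived, not asserted from the dimension of the covered set. The parts of your sketch that are sound --- independence being irrelevant, the decomposition of $E_{\mu}(h)$ by the first scale at which the measure bound kicks in, and the concavity endgame once $\overline{G}_{\mu}\leq\mathrm{id}$ is secured --- do match the intended argument, but the per-level selection scheme needs to be replaced as described above before the first-moment bound is legitimate.
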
 
 In view of  Proposition \ref{PropoMultiBed}, we introduce the following definition.
 
 \begin{definition}
 \label{defmf}
 Let $\mu \in\mathcal{M}(\mathbb{R}^d)$ be an exact-dimensional measure such that $\sup_{h : \ E_{\mu}(h)\neq \emptyset}h<+\infty$. The measure $\mu$ is said to be multifractal regular (m.f. in short) if it satisfies  items $(1),(2),(3),(4)$ of Proposition \ref{PropoMultiBed} with $\overline{G}_{\mu}$ such that $\overline{G}_{\mu}(h)=D_{\mu}(h)$  for every $h\leq h_c:=\inf\left\{h'  : \ D_{\mu}^{\leq }(h')=\dim_H \supp(\mu)\right\}$. 
 \end{definition}


\subsection{Application to $\times m,\times n$ and self-affine measures on Bedford-McMullen carpets}
Let us first recall the definition of Bedford-McMullen carpets. Let $1<m<n$ be two integers and let $\mathcal{I}\subset \left\{1,...,m\right\}\times \left\{1,...,n\right\}$ and given $(i,j)\in\mathcal{I},$ let $f_{(i,j)}$ be the canonical contraction of $[0,1]^2$ on $(\frac{i}{m},\frac{j}{n})+[\frac{1}{m},\frac{1}{n}].$ Write $S=\left\{f_{(i,j)}\right\}_{(i,j)\in \mathcal{I}}.$

The following classical result is due to Hutchinson.
\begin{theoreme}[\cite{Hutchinson}]
\label{DefBedfordCarpet}
There exists a unique non empty compact set $K_S$ (we call an $(m,n)$-Bedford-McMullen carpet) which satisfies $$K_S =\bigcup_{(i,j)\in\mathbb{I}}f_{(i,j)}(K_S).$$
In addition, given a probability vector $(p_{(i,j)})_{(i,j)\in\mathcal{I}}\in(0,1)^{\# \mathcal{I}}$, there exists a unique probability measure $\mu$ (which supported on $K_S$) satisfying $$\mu(\cdot)=\sum_{(i,j)\in\mathcal{I}}\mu \circ f_{(i,j)}^{-1}(\cdot).$$
\end{theoreme}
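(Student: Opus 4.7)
This is Hutchinson's classical theorem, and the plan is the standard two-step application of Banach's fixed point theorem, once on compact sets and once on probability measures.

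For the first assertion, the plan is to work in the space $\mathcal{K}([0,1]^2)$ of non-empty compact subsets of $[0,1]^2$, endowed with the Hausdorff metric $d_H$. This space is complete (a classical fact: a Cauchy sequence $(K_n)$ converges to $K=\bigcap_{N\geq 1}\overline{\bigcup_{n\geq N}K_n}$). The Hutchinson operator $F:\mathcal{K}([0,1]^2)\to \mathcal{K}([0,1]^2)$ defined by $F(A)=\bigcup_{(i,j)\in\mathcal{I}}f_{(i,j)}(A)$ is well-defined because the finite union of non-empty compact sets is compact. Each $f_{(i,j)}$ is affine with linear part $\mathrm{diag}(1/m,1/n)$, hence Lipschitz with constant $1/m$ for the $\|\cdot\|_\infty$-norm. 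A short computation using the definition of $d_H$ shows that $d_H(F(A),F(B))\leq \frac{1}{m}d_H(A,B)$, so $F$ is a strict contraction and admits a unique fixed point $K_S$ by Banach's theorem.

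For the second assertion, the plan is analogous, working on the space $\mathcal{M}_1(K_S)$ of Borel probability measures supported on $K_S$, equipped with the Wasserstein (or equivalently Kantorovich--Rubinstein) metric
\begin{equation*}
W_1(\nu_1,\nu_2)=\sup\left\{\int g\,d\nu_1-\int g\,d\nu_2\ :\ g\ \text{1-Lipschitz on }K_S\right\}.
\end{equation*}
Since $K_S$ is compact, $(\mathcal{M}_1(K_S),W_1)$ is a complete (in fact compact) metric space and induces weak convergence. Define $M:\mathcal{M}_1(K_S)\to\mathcal{M}_1(K_S)$ by $M(\nu)=\sum_{(i,j)\in\mathcal{I}}p_{(i,j)}\,\nu\circ f_{(i,j)}^{-1}$ (the statement in the excerpt omits the $p_{(i,j)}$; we include them since otherwise $M(\nu)$ is not a probability measure). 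Using that $g\circ f_{(i,j)}$ is $\tfrac{1}{m}$-Lipschitz whenever $g$ is 1-Lipschitz, one checks that
\begin{equation*}
W_1(M(\nu_1),M(\nu_2))\leq \frac{1}{m}W_1(\nu_1,\nu_2),
\end{equation*}
so $M$ is a contraction and admits a unique fixed point $\mu$, which is automatically supported on $K_S$ because $M$ preserves the subclass of measures supported on $F$-invariant compact sets containing $K_S$, and iterates of any initial measure concentrate on $K_S$.

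The only mildly delicate point is the measure part: one needs completeness of $(\mathcal{M}_1(K_S),W_1)$ and the contraction estimate via the Kantorovich dual formulation. None of this is difficult once the framework is set up, and there is nothing specifically self-affine here — the argument works verbatim for any finite collection of contractions on a complete metric space.
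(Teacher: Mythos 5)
This is a classical result that the paper cites from Hutchinson without proof, and your argument is the standard one: Banach's fixed point theorem on $(\mathcal{K}([0,1]^2),d_H)$ for the set part and on $(\mathcal{M}_1(K_S),W_1)$ for the measure part, with the correct contraction ratio $\max\{1/m,1/n\}=1/m$; this is correct. You were also right to reinstate the weights $p_{(i,j)}$ in the invariance equation $\mu=\sum_{(i,j)\in\mathcal{I}}p_{(i,j)}\,\mu\circ f_{(i,j)}^{-1}$, which the paper's statement omits, since otherwise the right-hand side is not a probability measure.
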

Moreover, it is known any such measure $\mu$ is exact-dimensional, as any self-affine measure is \cite{Feng1}.

Let us define $T_{m,n}:\mathbb{T}^2 \to \mathbb{T}^2$ by setting for every $(x,y)\in\mathbb{T}^2,$ $$T_{m,n}(x,y)=(mx,ny).$$

\begin{remark}
One easily establishes that $\mu$ is $T_{m,n}$ ergodic. 
\end{remark}
The next proposition establishes that $(T_{m,n},\mu)$ is $\Sigma$-mixing.
\begin{proposition}
\label{MixingBeford}
    Let $\mu\in\mathcal{M}(\mathbb{R}^d)$ be a self-affine measure associated with an $(m,n)$-Bedford-McMullen carpet. There exists $C>0$ and $\tau<1$ such that for every $n\in\mathbb{N},$ for every ball $A$ and Borel set $B,$ one has $$\mu\Big(T^{-n}(B)\cap A\Big)\leq \mu(A)\mu(B)+C\tau^n \mu(B).$$ 
    In particular, $(T_{m,n},\mu)$ is $\Sigma$-mixing.
\end{proposition}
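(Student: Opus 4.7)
The plan is to exploit the Bernoulli-like structure of $\mu$ with respect to the partition of $[0,1]^2$ into level-$N$ IFS rectangles, then to control the geometric boundary error by means of the one-dimensional marginal probabilities of $\mu$ in the two coordinate directions.

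First, for $\sigma=((i_1,j_1),\ldots,(i_N,j_N))\in\mathcal{I}^N$, set $f_\sigma=f_{(i_1,j_1)}\circ\cdots\circ f_{(i_N,j_N)}$, $p_\sigma=\prod_{k=1}^N p_{(i_k,j_k)}$, and $R_\sigma=f_\sigma([0,1]^2)$, which is a rectangle of size $m^{-N}\times n^{-N}$ with $\mu(R_\sigma)=p_\sigma$. Since $T_{m,n}\circ f_{(i,j)}=\mathrm{id}$ for every $(i,j)\in\mathcal{I}$, iteration gives $T_{m,n}^N|_{R_\sigma}=f_\sigma^{-1}$; combined with the self-similarity of $\mu$ iterated at scale $N$, namely $\mu|_{R_\sigma}=p_\sigma(f_\sigma)_{\ast}\mu$, this produces the key exact identity
\begin{equation*}
\mu\bigl(R_\sigma\cap T_{m,n}^{-N}(B)\bigr)=\mu(R_\sigma)\mu(B)
\end{equation*}
for every Borel set $B\subset\mathbb{T}^2$. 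Summing over $\sigma$, the same equality remains valid when $R_\sigma$ is replaced by any union of distinct level-$N$ IFS rectangles.

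Next, given a ball $A=B(x,r)$ (a sup-norm square in $\mathbb{T}^2$), let $\overline{A}_N$ denote the union of level-$N$ rectangles meeting $A$ and $\widetilde{A}_N$ the union of those contained in $A$, so that $\widetilde{A}_N\subset A\subset\overline{A}_N$ and $\overline{A}_N\setminus\widetilde{A}_N$ is precisely the collection of rectangles meeting $\partial A$. Applying the identity to $\overline{A}_N$ one obtains
\begin{equation*}
\mu(A\cap T_{m,n}^{-N}(B))\leq \mu(\overline{A}_N\cap T_{m,n}^{-N}(B))=\mu(\overline{A}_N)\mu(B)\leq \bigl(\mu(A)+\mu(\overline{A}_N\setminus\widetilde{A}_N)\bigr)\mu(B),
\end{equation*}
reducing the whole proposition to the uniform boundary estimate $\mu(\overline{A}_N\setminus\widetilde{A}_N)\leq C\tau^N$ with some $\tau<1$ depending only on $\mu$.

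The uniform bound is the heart of the argument. The boundary $\partial A$ consists of at most four axis-aligned segments; consider for instance the top segment $\{y_2=x_2+r\}\cap A$. A rectangle $R_\sigma$ meets it iff the vertical digits of $\sigma$ satisfy $\sum_k j_k n^{-k}\in[x_2+r-n^{-N},x_2+r]$, which pins down $(j_1,\ldots,j_N)$ up to at most two alternatives (the possible base-$n$ expansions near the cut). For any such fixed $(j_1,\ldots,j_N)$, summing over admissible horizontal digits factorises as
\begin{equation*}
\sum_{(i_1,\ldots,i_N):\,(i_k,j_k)\in\mathcal{I}\,\forall k}\prod_{k=1}^N p_{(i_k,j_k)}=\prod_{k=1}^N\alpha_{j_k}\leq\alpha_{\max}^N,
\end{equation*}
where $\alpha_j:=\sum_{i:(i,j)\in\mathcal{I}} p_{(i,j)}$ and $\alpha_{\max}<1$ as soon as $\mathcal{I}$ occupies at least two rows. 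The same bound covers the bottom segment, and the left/right segments yield an analogous bound $\beta_{\max}^N$ with $\beta_i:=\sum_{j:(i,j)\in\mathcal{I}} p_{(i,j)}$. Setting $\tau:=\max(\alpha_{\max},\beta_{\max})$ gives $\mu(\overline{A}_N\setminus\widetilde{A}_N)\leq C\tau^N$ with $C$ depending only on $\mathcal{I}$, and $\Sigma$-mixing follows at once from $\sum_N\tau^N<+\infty$. The degenerate case in which a marginal concentrates on a single row or column collapses the carpet to a one-dimensional self-similar set on which the same argument applies in the surviving direction; the main technical subtlety of the proof is really this strict inequality $\tau<1$, which is what forces the analysis through the two separate marginal distributions rather than through $\max p_{(i,j)}$ (a quantity which need not be $<1/\max(m,n)$).
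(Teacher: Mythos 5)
Your proposal is correct and follows essentially the same route as the paper's proof: the exact identity $\mu(R_\sigma\cap T^{-N}(B))=\mu(R_\sigma)\mu(B)$ on generation-$N$ rectangles coming from self-affinity, a sorting of the rectangles meeting $A$ into those contained in $A$ and those meeting the horizontal or vertical sides of $\partial A$, and an exponentially small bound on the boundary strips. The only cosmetic difference is that you bound the strip measures directly by the products of row/column probabilities $\alpha_{\max}^N,\beta_{\max}^N$, whereas the paper invokes the Frostman-type bound $\max\{\mu_x(I),\mu_y(I)\}\leq\kappa\vert I\vert^{\alpha}$ for the projected self-similar measures satisfying the open set condition — the same estimate in different clothing — and you additionally make explicit the degenerate single-row/column case that the paper leaves implicit.
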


\begin{proof}
Let $\left\{g_1,...,g_p\right\}=\left\{f_{(i,j)} : \ p_{(i,j)} \neq 0\right\}$ where $(p_{(i,j)})$ is the probability vector defining $\mu$ and $K$ the attractor associated with $\left\{g_1,...,g_p\right\}$. For simplicity, we simply write $T$ rather than $T_{m,n}$ and we  introduce some notations:
\begin{itemize}
    \item we write $\Pi_x,\Pi_y$ the orthogonal projections on the $x$ and $y$ axis,\medskip
    \item  We write $\Lambda=\left\{1,...,p\right\},$ $\Lambda^* =\bigcup_{k\geq 1}\Lambda^k,$ \medskip
    \item given a word $\underline{i}=(i_1,...,i_s)\in\Lambda^s,$ we write $g_{\underline{i}}=g_{i_1}\circ...\circ g_{i_s},$ \medskip
    \item  given a word $\underline{i}=(i_1,...,i_s)\in\Lambda^s,$ we write $R_{\underline{i}}=g_{\underline{i}}([0,1]^2),$ \medskip
    \item  Given $u\in\mathbb{N},$ write $$\mathcal{R}_u =\left\{R_{\underline{i}},\underline{i}\in\Lambda^u\right\}.$$ 
    \item  let $\pi$ be the canonical projection from $\Lambda^{\mathbb{N}}$ to $K$, given for any $(i_n)_{n\in\mathbb{N}}$ by $$\pi((i_n)_{n\in\mathbb{N}})=\lim_{n\to +\infty} g_{i_1}\circ ... \circ g_{i_n}(0).$$
\end{itemize}
Let us recall that both $\mu_x:=\mu \circ \Pi_x^{-1}$ and $\mu_y:=\mu \circ \Pi_y^{-1}$ are self-similar measures satisfying the open set condition supported on the $x$ and $y$ axis. This in particular implies that there exists $\kappa>0$ and $\alpha>0$ such  that, for every interval $I$, one has $$\max\left\{\mu_x(I \times \left\{0\right\}),\mu_y(\left\{0\right\}\times I)\right\}\leq \vert I \vert^{\alpha}.$$
Fix  a ball $A$,  a Borel set $B$ and $k\in\mathbb{N}.$ Notice that $$\pi^{-1}\Big(T^{-k}(B)\Big)=\bigcup_{\underline{i}\in \Lambda^k}\left\{\underline{i}x,x\in \pi^{-1}( B)\right\}.$$
We are going to sort the collection of rectangles of generation $k$ (i.e. the sets $R_{\underline{i}},$ where $\underline{i}\in\Lambda^k$) intersecting the ball $A$ into three families. Recall that $A$ is a square (we use $\vert\vert \cdot \vert\vert_{\infty}$) and define 
\begin{equation}
\begin{cases}
   \mathcal{A}_{1,k}= \left\{R\in\mathcal{R}_k : \ \text{R intersects an horizontal side of }A\right\} \\   \mathcal{A}_{2,k}= \left\{R\in\mathcal{R}_k : \ \text{R intersects a vertical side of }A\right\}   \\ \mathcal{A}_{3,k}=\left\{R\in\mathcal{R}_k : R\cap A \neq \emptyset\right\}\setminus \mathcal{A}_{1,k}\cup \mathcal{A}_{2,k}.
\end{cases}
\end{equation}
The definition of the sets $\mathcal{A}_{1,k},\mathcal{A}_{2,k},\mathcal{A}_{3,k}$ yields that $$\mu\Big(T^{-k}(B)\cap A\Big)\leq\mu\Big(T^{-k}(B)\cap \bigcup_{R \in \mathcal{A}_{1,k}\cup \mathcal{A}_{2,k}\cup \mathcal{A}_{3,k}}R\Big)  .$$
In addition, from the self-affinity equation, one easily gets that, for every $R\in \mathcal{A}_{1,k}\cup \mathcal{A}_{2,k}\cup \mathcal{A}_{3,k},$ one has  $$\mu(T^{-k}(B)\cap R)=\mu(R)\mu(B).$$

Notice that $\bigcup_{R\in \mathcal{A}_{1,k}}R \subset S\cup S',$ where $S=[0,1]\times I,$ $S'=[0,1]\times I',$ and $I,I'$ are intervals satisfying $\vert I \vert=\vert I' \vert=\frac{2}{n^k}.$ Hence we have $$\mu\Big(\bigcup_{R\in \mathcal{A}_{1,k}}R \Big)\leq \mu(S)+\mu(S')=\mu_y(I)+\mu_y(I') \leq 2\kappa\frac{2^\alpha}{n^{\alpha k }}$$
so that $$\mu\Big(T^{-k}(B)\cap \bigcup_{R\in \mathcal{A}_{1,k}}R\Big) \leq \mu(B)\times 2\kappa\frac{2^\alpha}{n^{\alpha k }}.$$
A similar argument shows that $$\mu\Big(T^{-k}(B)\cap \bigcup_{R\in \mathcal{A}_{2,k}}R\Big) \leq \mu(B)\times 2\kappa\frac{2^\alpha}{m^{\alpha k }}.$$
Fix $\underline{i}\in\Lambda^k$ such that $R_{\underline{i}}\in \mathcal{A}_{3,k}$. Notice that, by definition of $\mathcal{A}_{3,k},$ one must have $R_{\underline{i}}\subset A.$ 
Thus one has $$\mu\Big(T^{-k}(B)\cap\bigcup_{R \in\mathcal{A}_{3,k}}R\Big)=\mu(B)\times \mu\Big(\bigcup_{R \in\mathcal{A}_{3,k}}R\Big)\leq \mu(B)\times \mu(A) .$$
These three estimates yields 
\begin{align*}
    \mu(T^{-k}(B)\cap A) \leq \mu(A)\mu(B)+4\kappa 2^{\alpha}\frac{1}{m^{\alpha k}}\mu(B),
\end{align*}
which concludes the proof.
\end{proof}
%

SaussolHandling  properly the $``$ multifractal part $"$ of \eqref{equFanShmelTroub}  would require a precise understanding of $h\mapsto D_{\mu}(h)$.  Unfortunately, it appears that  so far, only the spectrum $h\mapsto \dim_H \widetilde{E}_{\mu}(h)$ has been estimated, by Jordan and Rams \cite{RamsJord} and estimating the $``$ $\liminf$ $"$ spectrum $D_{\mu}$ raises many difficulties and  remains an open problem.   As mentioned in the  introduction, recent work of Chamberlain-Cann showed that a certain class of self-affine measures on Bedford McMullen carpets satisfies a symbolic version of Definition \ref{defmf} (see proof of \cite[Theorem 3.12]{MultiBedCann}) so that, at our current state of knowledge, it seems reasonable to conjecture that every self-affine measure on Bedford-McMullen carpet are multifractal regular (Definition \ref{defmf}). With such an assumption, Proposition \ref{PropoMultiBed} would yield the following corollary.

\begin{corollary}
Let $K_S$ be an $(m,n)$-Bedford-McMullen carpet and $\mu$ a self-affine measure associated with $S.$ Assume that $\mu$ is multifractal regular,  then for every $0<\delta<\frac{1}{\dim(\mu)},$ for $\mu$-almost every $x\in\mathbb{T}^2,$ one has $$\dim_H \left\{y : \ \vert\vert T^k_{m,n}(x)-y \vert\vert_{\infty}\leq \frac{1}{n^\delta}\text{ i.o. }\right\}=D_{\mu}^{\leq}(h).$$ 
\end{corollary}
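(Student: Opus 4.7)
The plan is to combine two results established earlier in the paper. Proposition~\ref{MixingBeford} supplies the $\Sigma$-mixing property of $(T_{m,n},\mu)$, and since any self-affine measure is exact-dimensional by \cite{Feng1}, Proposition~\ref{thmmultifract} delivers the lower bound $\lim_{t\to(1/\delta)^{-}}D_{\mu}^{\leq}(t)\leq \dim_{H}E(x,\underline{r})$, where $r_k:=k^{-\delta}$ and $E(x,\underline r)$ is as in \eqref{defExr}. On the other side, Proposition~\ref{PropoMultiBed}, whose hypotheses are exactly those built into Definition~\ref{defmf} of multifractal regularity, will control the part of the limsup set lying in the high-local-dimension regime. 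Writing $h:=1/\delta$ throughout (the statement seems to use $h$ as a shorthand), the target is to show that both one-sided estimates coincide with $D_\mu^{\leq}(h)$.

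For the upper bound I would decompose
\[
E(x,\underline{r}) \;\subseteq\; \bigl(E(x,\underline{r})\cap E_{\mu}(\leq h)\bigr)\,\cup\,\Bigl(E(x,\underline{r})\cap \bigcup_{h'>h}E_{\mu}(h')\Bigr).
\]
The first piece has Hausdorff dimension at most $D_\mu^{\leq}(h)$ tautologically. For the second, I would apply Proposition~\ref{PropoMultiBed} to the sequence $X_{k}:=T_{m,n}^{k}(x)$: although the $X_{k}$ are not independent, $T_{m,n}$-invariance of $\mu$ ensures that each $X_{k}$ has common marginal law $\mu$, which is the only distributional hypothesis in that proposition. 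This yields, $\mu$-a.s., a bound of $\overline{G}_{\mu}(h)$ on the dimension of the second piece. Multifractal regularity then gives $\overline{G}_{\mu}(h)=D_{\mu}(h)\leq D_{\mu}^{\leq}(h)$ as long as $h\leq h_{c}$, while for $h>h_{c}$ the upper bound is automatic since $E(x,\underline{r})\subseteq K_{S}$ and $D_{\mu}^{\leq}(h)=\dim_{H}K_{S}$.

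To match the lower bound with $D_\mu^{\leq}(h)$, I would establish left-continuity of $D_\mu^{\leq}$ at $h$. The decomposition $E_{\mu}(\leq h)=\bigcup_{n}E_{\mu}(\leq h-1/n)\cup E_{\mu}(h)$ together with countable stability of Hausdorff dimension yields the identity $D_\mu^{\leq}(h)=\max\bigl\{\lim_{t\to h^{-}}D_\mu^{\leq}(t),\,D_{\mu}(h)\bigr\}$. Under multifractal regularity, $D_{\mu}=\overline{G}_{\mu}$ on $[0,h_{c}]$ is concave, hence continuous, so $D_{\mu}(h)=\lim_{t\to h^{-}}D_{\mu}(t)\leq \lim_{t\to h^{-}}D_\mu^{\leq}(t)$; the identity then collapses to the desired left-continuity, and the lower bound becomes exactly $D_\mu^{\leq}(h)$.

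The principal obstacle is precisely this left-continuity step: it is not part of the conclusion of Proposition~\ref{thmmultifract} and has to be extracted from Definition~\ref{defmf} by hand, which is why the concavity requirement baked into multifractal regularity is indispensable. A secondary but essentially cosmetic point is the transfer of Proposition~\ref{PropoMultiBed} to orbits rather than i.i.d.\ samples, made available by the $T_{m,n}$-invariance of $\mu$. All other ingredients are direct applications of the general theorems of Section~\ref{sec-result}, together with Proposition~\ref{MixingBeford}.
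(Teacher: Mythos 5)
Your argument is correct and follows essentially the same route the paper intends: Proposition \ref{MixingBeford} supplies the $\Sigma$-mixing needed for the lower bound of Proposition \ref{thmmultifract}, while the upper bound comes from Proposition \ref{PropoMultiBed} applied to the orbit (legitimate since only the common law $\mu$, guaranteed by $T_{m,n}$-invariance, is used), with Definition \ref{defmf} making the two bounds coincide. Your explicit left-continuity argument for $D_{\mu}^{\leq}$ via concavity of $\overline{G}_{\mu}=D_{\mu}$ on $[0,h_c]$ is precisely the detail the paper leaves implicit in the remark following Proposition \ref{thmmultifract}.
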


Combining the above corollary with Theorem \ref{mainthm}, one obtains that \eqref{thmLS} holds in the case of self-affine measures on Bedford-McMullen carpets assuming that these measures are multifractal regular.

\begin{corollary}
Let $\mu \in\mathcal{M}(\mathbb{T}^2)$ be a self-affine measure associated with an $(m,n)$-Bedford-McMullen carpet. Then for $\mu$-almost every $x$, one has 
\begin{equation}
\dim_H \left\{y \  : \ \vert\vert T_{m,n}^k(x)-y\vert\vert_{\infty}\leq \frac{1}{k^{\delta}}\text{ i.o. }\right\}=\begin{cases}\frac{1}{\delta}\text{ if }\delta\geq \frac{1}{\dim(\mu)}\\ D_{\mu}(\frac{1}{\delta})\text{ if }\frac{1}{h_0}\leq\delta\leq \frac{1}{\dim(\mu)}\text{ and }\mu \text{ is m.f.}\\ \dim_H K \text{ if  }\delta\leq  \frac{1}{h_{0}}\text{ and }\mu \text{ is m.f.}\end{cases}
\end{equation}
\end{corollary}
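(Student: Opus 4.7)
The plan is to assemble the three-case formula from the two results already stated in the subsection, handling each range of $\delta$ separately and using the multifractal regularity hypothesis only to unfold $D_\mu^{\leq}$ into the two pieces appearing on the right-hand side. As a preliminary, I note that the hypotheses of Theorem~\ref{mainthm} are in force: the measure $\mu$ is exact-dimensional by Feng's theorem on self-affine measures, and Proposition~\ref{MixingBeford} yields that $(T_{m,n},\mu)$ is $\Sigma$-mixing.

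For $\delta\geq 1/\dim(\mu)$, I set $r_n=n^{-\delta}$. A direct computation gives $s_{\underline r}=1/\delta$, and the hypothesis on $\delta$ ensures $s_{\underline r}\leq \dim\mu$, so Theorem~\ref{mainthm} yields $\dim_H E(x,\underline r)=1/\delta$ for $\mu$-almost every $x$. This produces the first line of the piecewise formula (without any multifractal regularity assumption being needed in this regime).

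For $0<\delta<1/\dim(\mu)$ under the multifractal regularity assumption, the corollary preceding this statement gives the value $D_\mu^{\leq}(1/\delta)$, and it only remains to identify this quantity with $D_\mu(1/\delta)$ on the middle range and with $\dim_H K$ on the bottom range. Definition~\ref{defmf} provides a concave mapping $\overline{G}_\mu$ coinciding with $D_\mu$ on $[0,h_c]$ and satisfying $\overline{G}_\mu(\dim\mu)=\dim\mu$; concavity together with $\overline{G}_\mu\leq \dim_H K$ forces $D_\mu$ to be nondecreasing on $[\dim\mu,h_c]$ and to attain its maximum $\dim_H K$ precisely at $h_c$. Since $h_c=h_0$ by the definition of $h_0$ in the statement, the running supremum $D_\mu^{\leq}$ coincides with $D_\mu$ on $[\dim\mu,h_0]$ and equals the constant $\dim_H K$ on $[h_0,\infty)$. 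Evaluating at $h=1/\delta$ therefore returns $D_\mu(1/\delta)$ when $1/h_0\leq\delta\leq 1/\dim\mu$ and $\dim_H K$ when $\delta\leq 1/h_0$, matching the two remaining lines of the formula.

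The only step that goes beyond bookkeeping is the unfolding $D_\mu^{\leq}\rightsquigarrow D_\mu$ on the increasing branch, which is precisely what the multifractal regularity hypothesis is engineered to guarantee via the concavity of $\overline{G}_\mu$; I expect this mild unimodality argument to be the main (and essentially only) subtlety of the proof.
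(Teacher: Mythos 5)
Your proposal is correct and follows the paper's own (essentially one-line) argument: the paper likewise obtains the first line from Theorem \ref{mainthm} applied to $r_n=n^{-\delta}$ (with Proposition \ref{MixingBeford} supplying the $\Sigma$-mixing and \cite{Feng1} the exact dimensionality of self-affine measures), and the remaining two lines from the corollary immediately preceding the statement. One caution on the only step you add beyond the paper: describing $D_\mu^{\leq}$ as the running supremum of $D_\mu$ is not literally justified, since $E_\mu(\leq h)$ is an uncountable union of level sets whose Hausdorff dimension need not equal $\sup_{h'\leq h}D_\mu(h')$; the identification $D_\mu^{\leq}(1/\delta)=D_\mu(1/\delta)$ on the range $\dim\mu\leq 1/\delta\leq h_0$ really rests on the covering structure behind Definition \ref{defmf} (as in \cite{SeurMarkov,EP}) rather than on concavity of $\overline{G}_\mu$ alone (your argument for the bottom range, $D_\mu^{\leq}=\dim_H K$ past $h_0$, is fine), a point the paper itself also passes over silently.
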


\section{Proof of Proposition \ref{LocdimMix}}
\label{secproofLocdimMix}

\begin{proof}
We write $\alpha=\underline{\dim}(\mu,y).$ First notice that, for every $x,y$ one has, for every $\eta>0,$ 

\begin{equation}
\begin{cases}\liminf_{n\to +\infty}\frac{\log(\tau_{\frac{1}{n^{\eta}}}(x,y))}{-\log \mu(B(y,\frac{1}{n^\eta}))}=\liminf_{k\to+\infty}\frac{\log(\tau_{2^{-k}}(x,y))}{-\log \mu(B(y,2^{-k}))}=\liminf_{r\to 0}\frac{\log(\tau_{r}(x,y))}{-\log\mu(B(y,r))} \\ \\
\limsup_{n\to +\infty}\frac{\log(\tau_{\frac{1}{n^{\eta}}}(x,y))}{-\log \mu(B(y,\frac{1}{n^\eta}))}=\limsup_{k\to+\infty}\frac{\log(\tau_{2^{-k}}(x,y))}{-\log \mu(B(y,2^{-k}))}=\limsup_{r\to 0}\frac{\log(\tau_{r}(x,y))}{-\log\mu(B(y,r))}.
\end{cases}
\end{equation}

and

Let us first recall the following result.

\begin{theoreme}[\cite{Galadim}, Lemma 3]
For every $\gamma>\frac{1}{\alpha}$, for $\mu$-almost every $x$, $$\liminf_{n\to+\infty}n^{\gamma}\min_{1\leq i\leq n}\vert\vert T^i(x)-y\vert\vert_{\infty}=+\infty.$$
\end{theoreme}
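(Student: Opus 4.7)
The plan is to run a Borel--Cantelli argument along a dyadic subsequence of scales, using only the $T$-invariance of $\mu$ and the definition of $\alpha=\underline{\dim}(\mu,y)$; no mixing hypothesis is actually needed for this bound. Since $\gamma\alpha>1$, I can pick $\beta\in(1/\gamma,\alpha)$, so that $\gamma\beta>1$, and by the very definition of $\underline{\dim}(\mu,y)=\alpha$ there is an $r_0>0$ such that $\mu(B(y,r))\leq r^{\beta}$ for every $0<r\leq r_0$.

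For each integer $M\geq 1$ and $k\geq 0$, I introduce the dyadic event
$$F_k^M=\Big\{x:\min_{1\leq i\leq 2^{k+1}}\vert\vert T^i(x)-y\vert\vert_{\infty}\leq M\cdot 2^{-\gamma k}\Big\}.$$
A union bound over $i\in\{1,\dots,2^{k+1}\}$ combined with the $T$-invariance of $\mu$ (which follows from ergodicity) gives
$$\mu(F_k^M)\leq \sum_{i=1}^{2^{k+1}}\mu\bigl(T^{-i}(B(y,M\cdot 2^{-\gamma k}))\bigr)=2^{k+1}\,\mu\bigl(B(y,M\cdot 2^{-\gamma k})\bigr).$$
For $k$ large enough that $M\cdot 2^{-\gamma k}\leq r_0$, the local-dimension bound yields $\mu(F_k^M)\leq 2M^{\beta}\,2^{k(1-\gamma\beta)}$, which is summable in $k$ since $1-\gamma\beta<0$. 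Borel--Cantelli then gives $\mu(\limsup_k F_k^M)=0$, and intersecting the full-measure sets over $M\in\mathbb{N}$, I obtain a full-measure set of $x$ for which, for every $M$, only finitely many $k$ satisfy $x\in F_k^M$.

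To conclude, I connect this dyadic information back to arbitrary $n$. Given such an $x$, fix $M\in\mathbb{N}$ and suppose $n^{\gamma}\min_{1\leq i\leq n}\vert\vert T^i(x)-y\vert\vert_{\infty}\leq M$ for some $n$, and let $k$ be such that $n\in[2^k,2^{k+1})$. Then
$$\min_{1\leq i\leq 2^{k+1}}\vert\vert T^i(x)-y\vert\vert_{\infty}\leq \min_{1\leq i\leq n}\vert\vert T^i(x)-y\vert\vert_{\infty}\leq Mn^{-\gamma}\leq M\cdot 2^{-\gamma k},$$
so $x\in F_k^M$. Only finitely many such $k$ exist, hence only finitely many such $n$, whence $\liminf_{n\to+\infty}n^{\gamma}\min_{1\leq i\leq n}\vert\vert T^i(x)-y\vert\vert_{\infty}\geq M$. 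Since $M$ is arbitrary, this $\liminf$ is $+\infty$.

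The only subtle point, which is not really an obstacle but explains the need for the dyadic grouping, is that a naive Borel--Cantelli directly over $n\in\mathbb{N}$ would bound $\mu\{x:\min_{i\leq n}\vert\vert T^i(x)-y\vert\vert_{\infty}\leq Mn^{-\gamma}\}$ by $M^{\beta}n^{1-\gamma\beta}$, and summability of this sequence requires $\gamma\beta>2$, which is stronger than what the hypothesis $\gamma>1/\alpha$ allows. Passing to the dyadic scales $n=2^k$ collapses this polynomial bound into a geometric series, for which $\gamma\beta>1$ suffices; this is precisely the optimal threshold $\gamma>1/\alpha$ of the statement.
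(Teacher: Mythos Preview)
Your proof is correct. The paper itself does not prove this statement but simply recalls it as a known result from \cite{Galadim} (Galatolo), invoking it as a black box inside the proof of Proposition~\ref{LocdimMix}. Your self-contained Borel--Cantelli argument along dyadic scales, using only the $T$-invariance of $\mu$ and the lower local-dimension hypothesis $\underline{\dim}(\mu,y)=\alpha$, is the standard route to this bound, and your closing remark correctly identifies why the dyadic grouping is needed to reach the optimal threshold $\gamma>1/\alpha$ rather than $\gamma>2/\alpha$.
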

As a direct consequence, for every $\gamma>\alpha$, for $\mu$-almost every $x$, there exists $N$ so large that, for every $n\geq N,$ one has, for every $1\leq i\leq n,$ 
$$\vert\vert T^i(x)-y\vert\vert_{\infty}\geq \frac{1}{n^{\gamma}}.$$
This yields that $$\tau_{\frac{1}{n^{\gamma}}}(x,y)\geq n=\Big(\frac{1}{n^{\gamma}}\Big)^{\frac{-1}{\gamma}}\geq \mu\Big(B(y,\frac{1}{n^{\gamma}})\Big)^{\frac{-1}{\gamma \alpha}}.$$
In particular, taking $\gamma$ so that $\frac{1}{\gamma\alpha}\geq 1-\varepsilon,$ one gets that, for $\mu$-almost every $x$, 
\begin{equation}
\label{equestitau1}
\liminf_{n\to +\infty}\frac{\log(\tau_{\frac{1}{n^{\gamma}}}(x,y))}{-\log \mu(B(y,\frac{1}{n^\gamma}))}\geq 1-\varepsilon.
\end{equation}

On the other hand, for $r>0,$ we define $N_r \in\mathbb{N}$ an integer that will be precised later on and we set $$E_{N_r}=\bigcup_{1 \leq i\leq N_r}T^{-i}\Big(B(y,r)\Big) .$$ 

Define also $A_i=T^{-i}\Big(B(y,r)\Big).$ Using that $(T,\mu)$ is $\phi$-mixing, one has 
\begin{align*}
&\sum_{1 \leq i,j\leq N_r}\mu(A_i \cap A_j)=N_r \mu(B(y,r))+2\sum_{1\leq i<j\leq N_r}\mu\Big(T^{i-j}(A_i)\cap A_j\Big)\\
&\leq N_r \mu(B(y,r))+2\sum_{1\leq i<j\leq N_r}\phi(j-i)\mu(B(y,r))+N_r(N_r+1)\mu(B(y,r))^2  \\
&\leq N_r \mu(B(y,r))+2\mu(B(y,r))\sum_{1\leq k\leq N_r} (N_r-k)\phi(k)+N_r(N_r+1)\mu(B(y,r))^2\\
&\leq N_r \mu(B(y,r))+2CN_r\mu(B(y,r))+N_r(N_r+1)\mu(B(y,r))^2,
\end{align*}
where $C=\sum_{k\geq 1}\phi(k).$
So by, Chung-Erdös inequality, one gets 
\begin{align*}
\mu\Big(\bigcup_{1\leq i\leq N_r}A_i\Big)\geq \frac{ \Big(N_r \mu(B(y,r))\Big)^2}{N_r \mu(B(y,r))+2CN_r\mu(B(y,r))+N_r(N_r+1)\mu(B(y,r))^2}
\end{align*}
Hence, for $r$ small enough, since $\mu(B(y,r))\leq r^{\alpha-\varepsilon},$ writing $$N_r=\lfloor\frac{1}{\mu\Big(B(y,r)\Big)^{1+\varepsilon}}\rfloor +1,$$ one has
\begin{align*}
1-\mu(E_{N_r})\leq \frac{N_r \mu(B(y,r))+2CN_r\mu(B(y,r))+N_r\mu(B(y,r))^2}{N_r \mu(B(y,r))+2CN_r\mu(B(y,r))+N_r(N_r+1)\mu(B(y,r))^2}
\end{align*}
so that 

\begin{align*}
1-\mu(E_{N_r})\leq  \frac{N_r \mu(B(y,r))}{N_r^2 \mu\Big(B(y,r)\Big)^2}\times \frac{2+2C}{1+2C+2}\leq \kappa \mu\Big(B(y,r)\Big)^{\varepsilon}\leq \kappa r^{\theta_{\varepsilon}},
\end{align*}
where $\theta_{\varepsilon}=(\alpha-\varepsilon)\times \varepsilon.$
In particular, written abusively $E_k=E_{N_{2^{-k}}}$, one has $$\sum_{k\geq 1}(1-\mu(E_{k}))<+\infty.$$
By Borel Cantelli's lemma, we conclude that $$\mu\Big(\liminf_{k\to +\infty}E_k\Big)=1.$$
In  other word, for $\mu$-almost every $x$, there exists $k_x\in\mathbb{N}$ so large that, for every $k \geq k_x,$  $x\in E_{k}$, so that there exists $1 \leq i \leq \frac{2}{\mu\Big(B(y,2^{-k})\Big)^{1+\varepsilon}}$ such that $x\in T^{-i}\Big(B(y,r)\Big).$ This yields 
\begin{equation}
\label{equestitau2}
\tau_{2^{-k}}(x,y)\leq \frac{2}{\mu\Big(B(y,2^{-k})\Big)^{1+\varepsilon}} .
\end{equation}
Combining \eqref{equestitau1} and \eqref{equestitau2}, one obtains that, for $\mu$-almost every $x$, $$1-\varepsilon\leq\liminf_{r\to 0}\frac{\log \tau_r(x,r)}{-\log \mu(B(y,r))}\leq \limsup_{r\to 0}\frac{\log \tau_r(x,r)}{-\log \mu(B(y,r))}\leq 1+\varepsilon.$$
Letting $\varepsilon\to 0$ along a countable sequence yields the result. 
\end{proof}

\section{The case $\delta<\frac{1}{\overline{\dim}_H \mu}$}

First, as mentioned in Section \ref{sec-result}, the upper-bound provided by Proposition \ref{thmmultifract} readily follows from \cite{SeurMarkov,EP}\footnote{We emphasize that in the articles mentioned, the authors prove the result in the case of sequences of independent identically distributed  random variables  but the actual proof does not use the independency of the sequence. Note that this is not so surprising as heuristically, one expects that the additional assumption that the variable are independent can only increase the Hausdorff dimension of the corresponding random limsup set.} So we now established the lower-bound, which is also a simple consequence of Proposition \ref{LocdimMix} and the proof techniques used to establish the lower-bound of \cite[Proposition 15]{EP}.

\begin{lemme}
Let $(T,\mu)$ be a $\Sigma$-mixing system. Then for every $h>0$ and every $t<\frac{1}{h},$ one has  $$\dim_H \limsup_{n\to+\infty}B(T^n(x),\frac{1}{n^t})\geq \dim_H D_{\mu}^{\leq}( h).$$
\end{lemme}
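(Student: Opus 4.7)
The plan is to adapt the random-case lower-bound argument of \cite{EP}: construct a Frostman measure $\nu$ on $E_\mu(\leq h)$ of dimension arbitrarily close to $D_\mu^\leq(h)$ and show that, for $\mu$-almost every $x$, $\nu$-almost every $y$ belongs to the limsup set $\limsup_n B(T^n(x),n^{-t})$. The independent Borel--Cantelli input of \cite{EP} is replaced by the dynamical hitting-time asymptotic $\log\tau_r(x,y)\sim -\log\mu(B(y,r))$ granted by Proposition \ref{LocdimMix}.

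Fix $h>0$, $t<1/h$, and $s<D_\mu^\leq(h)$, and choose $\varepsilon>0$ small enough that $t(h+\varepsilon)(1+\varepsilon)<1$. Since $\dim_H E_\mu(\leq h)=D_\mu^\leq(h)>s$, Frostman's lemma yields a compact $K\subset E_\mu(\leq h)$ and a Borel probability measure $\nu$ on $K$ with $\nu(B(z,r))\leq Cr^s$, so every Borel set of positive $\nu$-mass has Hausdorff dimension at least $s$. After discarding a $\nu$-null subset, I will assume every $y\in K$ satisfies $0<\underline{\dim}(\mu,y)\leq\overline{\dim}(\mu,y)<+\infty$; Proposition \ref{LocdimMix} then furnishes, for each $y\in K$, a $\mu$-null set $N_y$ outside of which $\log\tau_r(x,y)/(-\log\mu(B(y,r)))\to 1$. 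Borel measurability of this convergence in $(x,y)$, combined with Fubini applied to $\{(x,y)\in X\times K:x\in N_y\}$, produces a $\mu$-full set of $x$ for which the asymptotic holds on a $\nu$-conull subset $K_x\subset K$.

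Fix such an $x$ and $y\in K_x$. Since $y\in E_\mu(\leq h)$, there is a sequence $r_k=r_k(y)\to 0$ with $\mu(B(y,r_k))\geq r_k^{h+\varepsilon}$, whence for $k$ large
\[ \tau_{r_k}(x,y)\leq \mu(B(y,r_k))^{-(1+\varepsilon)}\leq r_k^{-(h+\varepsilon)(1+\varepsilon)}\leq r_k^{-1/t}. \]
Setting $n_k=\tau_{r_k}(x,y)$ forces $T^{n_k}(x)\in B(y,r_k)$ and $r_k\leq n_k^{-t}$, hence $y\in B(T^{n_k}(x),n_k^{-t})$ for infinitely many $k$, and thus $y\in\limsup_n B(T^n(x),n^{-t})$. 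The limsup set therefore contains $K_x$, which has full $\nu$-measure, so it has Hausdorff dimension at least $s$; letting $s\nearrow D_\mu^\leq(h)$ concludes.

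The main technical obstacle is the first reduction, namely forcing the Frostman measure $\nu$ to be supported where Proposition \ref{LocdimMix} is applicable. The condition $\overline{\dim}(\mu,y)<+\infty$ holds $\mu$-almost everywhere as soon as $\overline{\dim}_P\mu<+\infty$, which in turn can be imposed on $\nu$ at no cost in dimension. The thornier requirement $\underline{\dim}(\mu,y)>0$ must be checked against the possibility that $\{y:\underline{\dim}(\mu,y)=0\}$ accounts for a substantial portion of $E_\mu(\leq h)$; I plan to circumvent this by slicing $E_\mu(\leq h)=\{y:\underline{\dim}(\mu,y)=0\}\cup\bigcup_{\eta>0}\{y\in E_\mu(\leq h):\underline{\dim}(\mu,y)\geq\eta\}$, running the construction above on each slice $\{y\in E_\mu(\leq h):\underline{\dim}(\mu,y)\geq\eta\}$ and letting $\eta\to 0$, and dealing with the residual set $\{y:\underline{\dim}(\mu,y)=0\}$ separately via a direct Chung--Erd\H{o}s estimate exploiting that such points concentrate anomalously large $\mu$-mass at arbitrarily small scales, so the orbit meets their neighbourhoods almost immediately.
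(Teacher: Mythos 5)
Your proposal is correct and follows essentially the same route as the paper: a Frostman measure on $E_{\mu}(\leq h)$, the hitting-time asymptotic of Proposition \ref{LocdimMix} to place $\nu$-typical points in $\limsup_{n}B(T^n(x),n^{-t})$ infinitely often, and a Fubini exchange to get, for $\mu$-almost every $x$, full $\nu$-measure of the limsup set and hence the dimension bound. Your extra discussion of the hypotheses $0<\underline{\dim}(\mu,y)\leq\overline{\dim}(\mu,y)<+\infty$ (slicing off the set where the lower local dimension vanishes) is a point the paper's proof passes over silently, but it does not change the argument's substance.
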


The proof of the above lemma is directly derived from the proof of \cite[Lemma 13]{EP}.
\begin{proof}
Fix $\varepsilon>0$ and a Borel measure (which exists by Frostmann's Lemma) $\nu$ such that $\underline{\dim}_H \nu \geq \dim_H E_{\mu}(\leq h)-\varepsilon$ and  $$\nu(E_{\mu}(\leq h))=1.$$
Let $S_h$ be a Borel set with $\nu_h(S_h)=1$ and for every  $y\in E_h,$ $$\underline{\dim}(\mu,y)\leq h.$$
By Lemma \ref{LocdimMix}, for $\mu$-almost every $x$,  one has $$y\in \limsup_{n\to+\infty}B(T^n(x),\frac{1}{n^t}).$$
This yields
\begin{align*}
&\int \mu\left\{x : \ y\in \limsup_{n\to+\infty}B(T^n(x),\frac{1}{n^t})\right\}d\nu(y)=1 \\
&\Leftrightarrow \int \int \chi_{\limsup_{n\to+\infty}B(T^n(x),\frac{1}{n^t})}(y)d\mu(x)d\nu(y)=1 \\
&\Leftrightarrow \int \int \chi_{\limsup_{n\to+\infty}B(T^n(x),\frac{1}{n^t})}(y)d\nu(y)d\mu(x)=1 \\ &\Leftrightarrow \int \int \nu(\limsup_{n\to+\infty}B(T^n(x),\frac{1}{n^t}))d\mu(x)=1
\end{align*}
so that for $\mu$-almost every $x$, one has $$\nu(\limsup_{n\to+\infty}B(T^n(x),\frac{1}{n^t}))=1,$$
hence $$\dim_H \limsup_{n\to+\infty}B(T^n(x),\frac{1}{n^t})\geq \overline{\dim}_H \nu \geq \dim_H D^{\leq}_{\mu}(h)-\varepsilon.$$
Letting $\varepsilon$ tends to $0$ along a countable sequence concludes the proof.
\end{proof}

%
%
%
%
%
%
%

\section{Proof of Theorem \ref{mainthm}} 
In this section, we fix a $\Sigma$-mixing ergodic  system $(T,\mu)$  (as stated in  Definition \ref{defphimix}) such that $\mu$ is exact-dimensional and a non-increasing  sequence of radii $s_{\underline{r}}=(r_n)_{n\in \mathbb{N}}$ such that $s_{\underline{r}}\leq \dim \mu$, where $$s_{\underline{r}}=\inf\left\{s:\sum_{n\geq 1}r_n^s<+\infty\right\}.$$

Note that, by definition of $s_{\underline{r}},$ one has for every $x,$ $$\dim_H E(x,s_{\underline{r}})\leq s_{\underline{r}}.$$

Let us first introduce some notations and make some observation.  

Fix $\varepsilon>0$ and write
\begin{equation}
\label{defdeltaepsi}
\begin{cases} \alpha=\dim \mu\\ \delta_{\varepsilon} =\frac{\alpha+\varepsilon}{s_{\underline{r}}-\varepsilon} \\ s_{\varepsilon}=\frac{(\alpha-\varepsilon)}{\delta_{\varepsilon}}\times \frac{\alpha -\varepsilon}{\alpha+\varepsilon}-\varepsilon.\end{cases}
\end{equation}
One easily sees that $\delta_{\varepsilon} \geq 1$ and by definition of $s_{\underline{r}}$, one has 
\begin{equation}
\label{sumdivmes}
\sum_{n\geq 1} (r_n^{\frac{1}{\delta_\varepsilon}})^{\alpha+\varepsilon}=\sum_{n\geq 1}r_n^{s_{\underline{r}}-\varepsilon} =+\infty.
\end{equation}
 In order to prove the result, the strategy is to prove that \eqref{sumdivmes} implies a very strong full $\mu$-measure statement for the  sequence of balls $\limsup_{n\to +\infty} B(T^n(x),r_n^{\frac{1}{\delta_{\varepsilon}}})$ and to deduce from this property and an appropriate use of the mass transference principle for finite measures (established in \cite{ED3}) that $$\dim_H  \limsup_{n\to +\infty} B(T^n(x),r_n)\geq \frac{\alpha}{\delta_{\varepsilon}}=s_{\varepsilon}.$$
 Letting $\varepsilon\to 0$ would then yield the result. 
 
\subsection{Recall on the mass transference principle for finite measure}

We start this section by recalling the mass transference principle for finite measure. This theorem relies on estimating the so-called essential Hausdorff content associated with measure, which is defined as below.
\begin{definition}[\cite{ED3}]
\label{mucont}
{Let $\mu \in\mathcal{M}(\R^d)$, and $s\geq 0$.
The $s$-dimensional $\mu$-essential Hausdorff content of a set $A\subset \mathcal B(\R^d)$ is defined as}
{\begin{equation}
\label{eqmucont}
 \mathcal{H}^{\mu,s}_{\infty}(A)=\inf\left\{\mathcal{H}^{s}_{\infty}(E): \ E\subset  A , \ \mu(E)=\mu(A)\right\}.
 \end{equation}}
\end{definition}
The mass transference principle for finite measure established in \cite[Theorem 2.2]{ED3} is the following.

\begin{theoreme}[\cite{ED3}]
\label{zzani}
Let $\mu\in\mathcal{M}(\mathbb{R}^d)$ be a probability measure and let $\Big(B_n\Big)_{n\in\mathbb{N}}$  be a sequence of balls such that $\vert B_n \vert \to 0$ and $$\mu\Big(\limsup_{n\to+\infty}\frac{1}{2}B_n\Big)=1.$$
Let $\delta \geq 1$ be a real number. If there exists  $s<\dim\mu,$  such that for every $n\in \mathbb{N}$, one has $$\mathcal{H}^{\mu,s}_{\infty}(\widering{ B_n}^{\delta})\geq \mu(B_n),$$
then $$\dim_H \limsup_{n\rightarrow+\infty}B_n^{\delta}\geq s.$$
\end{theoreme}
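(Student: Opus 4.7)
The plan is to adapt the Beresnevich--Velani Mass Transference Principle, replacing Lebesgue volume by $\mu$ everywhere and using the $\mu$-essential Hausdorff content hypothesis in place of the classical full-Lebesgue-measure assumption. The target is to construct, inside $\limsup_n B_n^{\delta}$, a Cantor-like set $\mathcal C$ supporting a probability measure $\nu$ satisfying a Frostman-type bound $\nu(B(x,r)) \lesssim r^{s}$, and then invoke the mass distribution principle.

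The first step is to build a scale hierarchy using $\mu(\limsup_n \tfrac12 B_n)=1$ together with a $\mu$-Vitali covering argument: at each desired small scale $\rho_j \downarrow 0$, this produces a finite family $\mathcal F_j$ of pairwise disjoint balls $\tfrac12 B_{n_k}$ with $|B_{n_k}| \le \rho_j$ and $\mu(\bigcup_k \tfrac12 B_{n_k}) \ge 1-\eta_j$; these families can be nested so that each ball of generation $j{+}1$ lies inside a $\delta$-contraction $\mathring{B_{n_k}}^{\delta}$ chosen at generation $j$. Next, I would promote $\mu$-mass into $s$-content: the hypothesis $\mathcal{H}^{\mu,s}_{\infty}(\mathring{B_n}^{\delta}) \ge \mu(B_n)$ means that every $\mu$-full subset of $\mathring{B_n}^{\delta}$ has $s$-Hausdorff content at least $\mu(B_n)$, and a Frostman-type duality (in the spirit of Howroyd's theorem) applied inside $\mathring{B_n}^{\delta}$ supplies a finite Borel measure $\sigma_n$ supported on a $\mu$-full subset of $\mathring{B_n}^{\delta}$ with $\sigma_n(B(x,r)) \lesssim r^{s}$ for every $x,r$ and total mass comparable to $\mu(B_n)$. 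These local Frostman measures are the replacement for the volume of $B_n^{\delta}$ in the classical proof.

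Then I would assemble the Cantor set by refining generation by generation: inside each retained ball $B_n^{\delta}$ of generation $j$, keep only those children from $\mathcal F_{j+1}$ whose $\delta$-contractions lie inside $B_n^{\delta}$ and which are spaced enough to remain pairwise disjoint, weighted by $\sigma_n$-mass. The nested limit $\mathcal C$ is contained in $\limsup_n B_n^{\delta}$ by construction, and the Frostman estimate propagates along the tree to give $\nu(B(x,r)) \lesssim r^{s}$, whence the mass distribution principle yields $\dim_H \mathcal C \ge s$, proving the theorem.

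The main obstacle, and the reason the hypothesis $s < \dim\mu$ is needed, is the matching problem between $\mu$ and the local $\sigma_n$: $\mu$ selects which balls appear at each scale and with what total weight, while the $\sigma_n$ govern the $s$-dimensional spreading. The strict inequality $s < \dim\mu$ is what makes these compatible, since $\mu$-typical points see $\mu(B(x,r)) \approx r^{\dim\mu} \ll r^{s}$, ensuring that a large $\sigma_n$-fraction of each $\mathring{B_n}^{\delta}$ survives after discarding residual children of too-small $\mu$-mass. Quantifying \emph{enough children survive at every generation} --- so that $\nu$ remains non-degenerate and the Frostman bound is preserved under the refinement --- is the technical heart of the argument.
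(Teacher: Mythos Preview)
The paper does not prove this theorem: it is quoted verbatim from \cite{ED3} (as Theorem~2.2 there) and used as a black box in the application of the mass transference principle. There is therefore no ``paper's own proof'' to compare against.

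Your sketch is a reasonable high-level outline of how such a result is established, following the Beresnevich--Velani template with $\mu$ in place of Lebesgue measure and the $\mu$-essential content playing the role of the full-measure hypothesis. That said, it remains a sketch: the step where you invoke a ``Frostman-type duality in the spirit of Howroyd's theorem'' to produce the local measures $\sigma_n$ with the right total mass and the right scaling, and the step where you claim the Frostman bound ``propagates along the tree'', are precisely the places where the real work lies and where the hypothesis $s<\underline{\dim}_H\mu$ is actually used. In the original proof one does not typically build auxiliary Frostman measures $\sigma_n$ at each level; rather, one uses the definition of $\mathcal{H}^{\mu,s}_{\infty}$ directly to control, for each retained ball, the $s$-content of the children that survive, and the Cantor measure $\nu$ is defined by repeated mass redistribution according to $\mu$-weights. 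If you intend to write out a full proof, you should consult \cite{ED3} for the precise bookkeeping, since the sketch as written does not yet pin down why the total $\nu$-mass stays bounded below or why the Frostman exponent is exactly $s$ rather than some degraded value.
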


 \subsection{Full measure statement for every induced map}
  In the case where $\mu$ is $\alpha$-Alfhors regular, a simple full-measure statement would be sufficient to derive  Theorem \ref{mainthm} (in combination with the classical mass transference principle \cite{BV}). The difficulty here is that the traditional bound $\frac{\dim \mu}{\delta_{\varepsilon}}$ of the mass transference principle does not hold under the only hypothesis that $\mu$ is exact-dimensional\footnote{Actually, the situation is even worst here. Indeed, it can be proved that this uniform bound (on the sequences of balls $(B_n)_{n\in\mathbb{N}}$) does not hold when $\mu$ is a self-affine measure on a Bedford-McMullen carpet in general. }.

 Let us state start by introducing the following classical quantity, which are well-defined as a consequence of Poincaré's recurrence theorem.
 \begin{definition}
\label{definduced}
Let $A$ be a Borel set such that $\mu(A)>0.$ Given $x\in\mathbb{R}^d,$ we define:
\smallskip
\begin{itemize}
\item[(1)] the first hitting time of $x$ in $A$ as 
$$n_A(x)=\inf\left\{n\geq 1 : \ T^n(x)\in A\right\}\in\mathbb{N}\cup \left\{+\infty\right\},$$
\item[(2)] the map $T_A$ induced by $T$ on $A$ as $$T_A(x)=T^{n_A(x)}(x),$$
\item[(3)] for $k\in\mathbb{N},$ the $k$th hitting time of $x$ in $A$ as $$n_A^{k}(x)=\sum_{i=0}^{k-1}n_A(T_A^i(x)).$$
\end{itemize}
\end{definition}
\begin{remark}
\smallskip
\item[•] By Poincaré's recurrence Theorem, for $\mu$-a.e. $x$, $n_A(x)<+\infty.$\medskip
\item[•] It is classical that, defining $\mu_A(\cdot):=\frac{\mu(A\cap \cdot)}{\mu(A)},$ $(T_A,\mu_A)$ is also ergodic and $\mu_A$ is exact-dimensional of dimension $\alpha=\dim(\mu)$.\medskip
\item[•] The $k$th hitting time of $x$ in $A$ is the integer $n_A^k(x)$ such that $T_A^k(x)=T^{n_A^k(x)}(x).$
\end{remark}
 We are now ready to state the main theorem of this section.
\begin{theoreme}
\label{stronfullmes}
Let $(T,\mu)$ be a $\Sigma$-mixing ergodic system, with  exact-dimensional $\mu$. Let $(\widetilde{r}_n)_{n\in\mathbb{N}}$ be a decreasing sequence of radii. If there exists $t>\dim \mu$ such that $$\sum_{n\geq 1} \widetilde{r}_n^t=+\infty,$$
then for every $A$ with $\mu(A)>0,$ for $\mu$-almost every $x$ , $$\mu_A\Big(\limsup_{k\to+\infty}B\Big(T^{k}_A(x),\widetilde{r}_{n_A^k(x)}\Big)\Big)=1.$$ 
\end{theoreme}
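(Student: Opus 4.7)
The plan is to reduce the statement about the induced map $T_A$ to a Borel--Cantelli type statement for $T$ itself, and then apply the Chung--Erd\H{o}s inequality together with the $\Sigma$-mixing hypothesis. The key observation is that whenever $T^n(x)\in A$, the integer $n$ is automatically a return time $n_A^k(x)$ to $A$; consequently, if $T^n(x)\in B(y,\widetilde r_n)\cap A$ for infinitely many $n$, then $y\in B(T_A^k(x),\widetilde r_{n_A^k(x)})$ for infinitely many $k$. Applying Fubini to $\mu\otimes\mu_A$ on the measurable event $\{(x,y):T^n(x)\in B(y,\widetilde r_n)\cap A\ \text{i.o.}\}$, it therefore suffices to prove that for $\mu_A$-almost every $y$, for $\mu$-almost every $x$, the set $\{n:T^n(x)\in B(y,\widetilde r_n)\cap A\}$ is infinite. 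One may also assume $\widetilde r_n\to 0$, since the complementary case (the radii being bounded below) follows directly from the ergodicity of $(T_A,\mu_A)$ applied to $B(y,c)\cap A$ with $c=\lim \widetilde r_n>0$.

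Fix a $\mu_A$-typical $y$, meaning simultaneously: (i) $y$ is a density point of $A$ with respect to $\mu$ (via Besicovitch's density theorem for finite Borel measures on $\R^d$), so $\mu(B(y,r)\cap A)\geq \tfrac12 \mu(B(y,r))$ for all small $r$; and (ii) $y$ is generic for the exact-dimensionality of $\mu$, i.e. $\mu(B(y,r))\geq r^{\alpha+\varepsilon}$ for small $r$, with $\alpha=\dim\mu$. Choose $\varepsilon\in(0,t-\alpha)$. Since $\sum_n \widetilde r_n^t=+\infty$ and $\widetilde r_n\to 0$, one has $\sum_n\widetilde r_n^{\alpha+\varepsilon}=+\infty$, and setting $C_n:=B(y,\widetilde r_n)\cap A$, (i) and (ii) imply $S_N:=\sum_{n\leq N}\mu(C_n)\to +\infty$. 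Next, apply the $\Sigma$-mixing inequality with $\gamma=1$ to the pair $(B(y,\widetilde r_m)\in \mathcal C_1,\ C_n\in \mathcal C_2)$ and use $C_m\subset B(y,\widetilde r_m)$ to get
$$
\mu\bigl(C_m\cap T^{-(n-m)}(C_n)\bigr)\leq \mu\bigl(B(y,\widetilde r_m)\bigr)\,\mu(C_n)+\phi(n-m)\,\mu(C_n).
$$
Summing over $m<n\leq N$, replacing $\mu(B(y,\widetilde r_m))$ by $2\mu(C_m)$ via (i), and invoking the summability $\sum_k\phi(k)<+\infty$, one obtains $M_N:=\sum_{m,n\leq N}\mu(T^{-m}C_m\cap T^{-n}C_n)\leq 4S_N^2+C(y)S_N$.

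The Chung--Erd\H{o}s inequality then yields $\mu(F^*)\geq \liminf_N S_N^2/M_N\geq 1/4>0$, where $F^*:=\{x:T^n(x)\in C_n\ \text{i.o.}\}$. To upgrade this to $\mu(F^*)=1$, note that the monotonicity $C_{n+1}\subset C_n$ (from the decrease of $\widetilde r_n$) gives the implication $x\in F^*\Rightarrow T(x)\in F^*$: indeed, if $T^n(x)\in C_n$ for arbitrarily large $n$, then $T^{n-1}(T(x))=T^n(x)\in C_n\subset C_{n-1}$. Hence $F^*\subset T^{-1}(F^*)$, and combined with measure preservation this forces $F^*=T^{-1}(F^*)$ modulo $\mu$; ergodicity then gives $\mu(F^*)=1$, after which the Fubini reduction concludes. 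The main obstacle I expect is the second-moment estimate: one must check that the asymmetric $\Sigma$-mixing hypothesis (ball against Borel set) genuinely controls $\mu(C_m\cap T^{-(n-m)}C_n)$ after enlarging $C_m\subset B(y,\widetilde r_m)$, and that the loss incurred is absorbed via (i) into a universal constant so that $M_N=O(S_N^2)$ uniformly in $N$, keeping Chung--Erd\H{o}s effective.
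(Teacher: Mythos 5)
Your proof is correct and follows essentially the same route as the paper's: Besicovitch density points of $A$ combined with exact-dimensionality to make $\sum_n\mu(B(y,\widetilde r_n)\cap A)$ diverge, the $\Sigma$-mixing inequality (after enlarging the first set to the ball $B(y,\widetilde r_m)$ so the ball-versus-Borel hypothesis applies) fed into Chung--Erd\H{o}s, forward-invariance of the limsup set coming from the monotonicity of the radii plus ergodicity to upgrade positive measure to full measure, and a Fubini swap identifying the hits of $B(y,\widetilde r_n)\cap A$ at time $n$ with the return times $n_A^k(x)$. The only cosmetic point is that Chung--Erd\H{o}s should be applied to the tail unions $\bigcup_{n\ge Q}T^{-n}C_n$, with a bound uniform in $Q$, before letting $Q\to\infty$ to control the limsup set itself, exactly as in the paper's Lemma \ref{thmLLVZ}.
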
 
\begin{remark}
With our previous notations, we will eventually take $\widetilde{r}_n=r_n^{\frac{1}{\delta_{\varepsilon}}}$ and $t=\alpha+\varepsilon.$
\end{remark}

The proof of Theorem \ref{stronfullmes} requires some preparation. In the next sub-section, the lemmas necessary to prove Theorem \ref{stronfullmes} are established.

\subsubsection{Some useful lemmas and recalls}

The following lemma is a version of the dynamical Borel-Cantelli lemma. 

\begin{lemme}
\label{thmLLVZ}
Let $(T,\mu)$ be an ergodic measure preserving system. Let $\gamma>0$ be a real number, $E\subset \mathbb{R}^d$ a Borel set such that $\mu(E)=1$ and $\rho:E \to \mathbb{R}_+$ a measurable mapping. Write $\mathcal{C}_1=\mathcal{C}_2=\left\{B(y,r),y\in E,0<r\leq \rho(y)\right\}.$

 Assume that $(T,\mu)$ is $\Sigma$-mixing with respect to $(\mathcal{C}_1,\mathcal{C}_2,\gamma).$ Then for $\mu$-almost every $y$, for every non increasing sequence of radii $(\ell_n)_{n\geq 1}\in \mathbb{R}_+ ^{\mathbb{N}}$ with $\ell_n \to 0,$ 
 \begin{align*}
      \sum_{n\geq 1}\mu(B(y,\ell_n))=+\infty &\Leftrightarrow \mu\Big(\limsup_{n\to +\infty}T^{-n}(B(y,\ell_n))\Big)>0 \\
      &\Leftrightarrow \mu\Big(\limsup_{n\to +\infty}T^{-n}(B(y,\ell_n))\Big)=1.
 \end{align*}

\end{lemme}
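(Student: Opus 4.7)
I would prove the two nontrivial implications of the equivalence separately; the easy direction is immediate. If $\sum_n \mu(B(y,\ell_n))<+\infty$, then $T$-invariance of $\mu$ gives $\mu(T^{-n}(B(y,\ell_n)))=\mu(B(y,\ell_n))$, so the classical (first) Borel--Cantelli lemma yields $\mu(\limsup_n T^{-n}(B(y,\ell_n)))=0$. Thus the only substantive content is to show that the divergence of $\sum_n \mu(B(y,\ell_n))$ implies $\mu(\limsup_n T^{-n}(B(y,\ell_n)))=1$ for $\mu$-a.e.\ $y\in E$.

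For that direction, fix $y\in E$ and write $A_n=T^{-n}(B(y,\ell_n))$, $B=\limsup_n A_n$. The plan is to run a second-moment (Chung--Erd\H{o}s) argument to get $\mu(B)>0$, and then upgrade to $\mu(B)=1$ via ergodicity. The upgrade step I would do first: since $(\ell_n)$ is non-increasing, if $T^n(x)\in B(y,\ell_n)$ for infinitely many $n\geq 1$, then $T^{n-1}(T(x))\in B(y,\ell_{n-1})\supseteq B(y,\ell_n)$ for infinitely many $n\geq 2$, hence $B\subseteq T^{-1}(B)$. By $T$-invariance this is an equality modulo $\mu$-null sets, and ergodicity forces $\mu(B)\in\{0,1\}$.

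To establish $\mu(B)>0$, I would use that $\ell_n\to 0$ and $\rho(y)>0$ to ensure $B(y,\ell_n)\in\mathcal{C}_1\cap\mathcal{C}_2$ for all $n$ sufficiently large, so the $\Sigma$-mixing hypothesis applies. Combined with $T$-invariance, for $n<m$ large,
\[
\mu(A_n\cap A_m)=\mu\bigl(B(y,\ell_n)\cap T^{-(m-n)}(B(y,\ell_m))\bigr)\leq \gamma\,\mu(A_n)\mu(A_m)+\phi(m-n)\,\mu(A_m).
\]
Setting $S_N=\sum_{n=1}^N\mu(A_n)$ and $\Phi=\sum_{k\geq 1}\phi(k)<+\infty$, summation gives
\[
\sum_{n,m=1}^N\mu(A_n\cap A_m)\leq (1+2\Phi)\,S_N+\gamma\,S_N^{\,2}.
\]
The Chung--Erd\H{o}s inequality $\mu(\limsup_n A_n)\geq \limsup_N S_N^{\,2}\big/\sum_{n,m\leq N}\mu(A_n\cap A_m)$, together with $S_N\to+\infty$, then yields $\mu(B)\geq 1/\gamma>0$. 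The $0$-$1$ law from the previous step promotes this to $\mu(B)=1$.

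The main obstacle is that the multiplicative constant $\gamma\geq 1$ appearing in the $\Sigma$-mixing inequality prevents Chung--Erd\H{o}s from delivering $\mu(B)=1$ directly; one only obtains $\mu(B)\geq 1/\gamma$, and the ergodicity argument is indispensable to reach full measure. That ergodicity step itself hinges on the monotonicity of $(\ell_n)$, which is exactly what makes $B$ near-invariant under $T$. The other mildly delicate point is that the $\Sigma$-mixing hypothesis is only usable for radii below the threshold $\rho(y)$, but since $\ell_n\to 0$ this restriction only affects finitely many terms and does not alter the limsup.
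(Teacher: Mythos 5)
Your proposal is correct and follows essentially the same route as the paper: the convergent case via invariance and Borel--Cantelli, the divergent case via the Chung--Erd\H{o}s (Kochen--Stone) second-moment bound using the $\Sigma$-mixing inequality to get measure at least $1/\gamma$, and then an ergodicity upgrade exploiting the monotonicity of $(\ell_n)$. The only cosmetic difference is the upgrade step, where you note $B\subseteq T^{-1}(B)$ and invoke invariance plus ergodicity directly, while the paper reaches the same conclusion by applying Birkhoff's theorem to a point of the limsup set; the two arguments are interchangeable.
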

Surprisingly it can be proved that the above lemma is actually optimal in a strong sense. Indeed, in \cite{EDcounter}, based on \cite{Saussogalato}, it is established that there exist an  ergodic system $(T,\mathcal{L}^3)$, where $T:\mathbb{T}^3 \to \mathbb{T}^3,$ which is $(\gamma,n\mapsto \frac{C}{n^s})$-mixing with $0<s\leq 1$ which  satisfies that for every $y\in\mathbb{T}^3,$ for every $\theta>\frac{1}{4},$  $$\mathcal{L}^3\Big(\left\{x:T^{n}(x)\in B(y,\frac{1}{n^\theta})\text{ i.o. }\right\}\Big)=0  $$
whereas for every $\frac{1}{\dim_H \mathcal{L}^3}=\frac{1}{3}\geq \theta>\frac{1}{4},$ $$\sum_{n\geq 1}\mathcal{L}^3\Big(B(y,\frac{1}{n^\theta})\Big)=+\infty.$$
We establish Lemma \ref{thmLLVZ}.
\begin{proof}
First, by Borel-Cantelli's Lemma, $$\sum_{n\geq 1}\mu(B(y,\ell_n))<+\infty \Rightarrow \mu\Big(\limsup_{n\to +\infty}T^{-n}(B(y,\ell_n))\Big)=0.$$
Hence, to show the first equivalence, one only needs to prove that $$ \sum_{n\geq 1}\mu(B(y,\ell_n))=+\infty \Rightarrow \mu\Big(\limsup_{n\to +\infty}T^{-n}(B(y,\ell_n))\Big)>0.$$
Let us fix $y\in E $ and write $A_i =T^{-i}(B(y,\ell_i)).$

By Chung-Erdös inequality, for every $Q\leq N,$ one has $$\mu\Big(\bigcup_{Q\leq i\leq N}A_i\Big)\geq \frac{\Big(\sum_{Q\leq i\leq N}\mu(A_i)\Big)^2}{\sum_{Q\leq i,j\leq N}\mu(A_i \cap A_j)}.$$
In particular, if one shows that there exists $C\geq 1$ and $C'>0$ such that 
\begin{equation}
\label{erta}
 \sum_{Q\leq i,j\leq N}\mu(A_i \cap A_j)\leq C'  \sum_{Q\leq i\leq N}\mu(A_i)+C \Big(\sum_{Q\leq i\leq N}\mu(A_i)\Big)^2,
\end{equation}
then, since $\sum_{i\geq 1}\mu(A_i)=+\infty$, letting $N \to +\infty$ yields $$\mu\Big(\bigcup_{ i\geq Q}A_i\Big)\geq \frac{1}{C}.$$ 
Letting $Q\to +\infty$ would then proves the claim. So let us prove that \eqref{erta} holds.  
\begin{align*}
&\sum_{Q\leq i,j\leq N}\mu(A_i \cap A_j)=\sum_{Q\leq i\leq N}\mu(B(y,\ell_i))+2\sum_{Q\leq i<j\leq N} \mu(B(y,\ell_i)\cap T^{-(i-j)}(B(y,\ell_j))) \\
&\leq \sum_{Q\leq i\leq N}\mu(B(y,\ell_i))+2\sum_{Q\leq i<j\leq N} \gamma\mu(B(y,\ell_i))\times \mu(B(y,\ell_j)) +\phi(j-i)\mu(B(y,\ell_j)) \\
&\leq (1+\sum_{n\geq 1}\phi(n))\sum_{Q\leq i\leq N}\mu(B(y,\ell_i))+\gamma\Big(\sum_{Q\leq i\leq N} \mu(B(y,\ell_i))\Big)^2.
\end{align*}
Thus the claim is proved.
Finally, notice that
$$T^n(x)\in B(y,\ell_n)\implies T^{n-1}(T(x))\in B(y,\ell_{n-1})$$
so that
$$x\in \limsup_{n\to +\infty}T^{-n}(B(y,\ell_n)) \Rightarrow T(x)\in \limsup_{n\to +\infty}T^{-n}(B(y,\ell_n)).$$
Assuming that $$\mu\Big(\limsup_{n\to +\infty}T^{-n}(B(y,\ell_n))\Big)>0,$$
by Birkhoff's ergodic theorem, there must exist $x\in \limsup_{n\to +\infty}T^{-n}(B(y,\ell_n))$ such that $$\lim_{p\to +\infty}\frac{\sum_{1\leq k\leq p-1}\chi_{\limsup_{n\to +\infty}T^{-n}(B(y,\ell_n))}(T^k(x))}{p}= \mu\Big(\limsup_{n\to +\infty}T^{-n}(B(y,\ell_n))\Big).$$
But since $x\in \limsup_{n\to +\infty}T^{-n}(B(y,\ell_n)) \implies T^{k}(x) \in \limsup_{n\to +\infty}T^{-n}(B(y,\ell_n))$ for every $k\in\mathbb{N},$ one has $$\mu\Big(\limsup_{n\to +\infty}T^{-n}(B(y,\ell_n))\Big)=1,$$
which proves the last equivalence.
\end{proof}

We are now ready to prove Theorem \ref{stronfullmes}.  
\begin{proof}
Let $(T,\mu)$ be an ergodic $\Sigma$-mixing ergodic system with exact-dimensional $\mu$ and let $(\ell_n)_{n\in\mathbb{N}}$ be a non increasing sequence of radii such that for some $t>\dim_H \mu:=\alpha$, one has $$\sum_{n\geq 1}\ell_n^t=+\infty.$$ To prove Theorem \ref{stronfullmes}, one needs to establish that for every Borel set $A$ with $\mu(A)>0,$ for $\mu$-almost every $x,$ one has   $$\mu\Big(\limsup_{n: T^n(x)\in A}B(T^n(x),\ell_n) \cap A\Big)=\mu(A).$$

Let us recall Besicovitch density theorem.
\begin{lemme}[\cite{Be}, Theorem 2]
 \label{densibesi}
 {Let $m\in\mathcal{M}(\R^d)$, $0<\kappa<1$ and $B$ be a Borel set with $m(B)>0.$ For every $r>0$, set}
 \begin{equation} 
 \label{defniv} 
 {B(r) =\left\{x\in B \ : \ \forall \tilde{r}\leq r, \ m(B(x,\tilde{r})\cap B)\geq \kappa m(B(x,\tilde{r}))\right\}} 
 \end{equation}
 {Then }
 \begin{equation}
{ m\left(\bigcup_{r>0}B(r)\right)=m(B).}
 \end{equation}
 \end{lemme}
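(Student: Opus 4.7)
The plan is to deduce Lemma \ref{densibesi} from the Besicovitch--Lebesgue differentiation theorem applied to the Radon measure $m$, since the statement is essentially a quantitative reformulation of the fact that, for $m$-almost every $x \in B$, the $m$-density of $B$ at $x$ equals $1$. Concretely, observe that $x \in \bigcup_{r > 0} B(r)$ exactly when there exists a threshold $r_{0} > 0$ such that $m(B \cap B(x, \tilde r)) \geq \kappa\, m(B(x, \tilde r))$ for every $\tilde r \leq r_{0}$. Consequently, if one can prove that for $m$-a.e.\ $x \in B$ the lower $m$-density of $B$ at $x$ is larger than the fixed $\kappa < 1$, the conclusion of the lemma follows.

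For this, I would invoke the Besicovitch--Lebesgue differentiation theorem: for any Radon measure $m$ on $\R^{d}$ and any locally $m$-integrable $f$,
$$\lim_{r \to 0^{+}} \frac{1}{m(B(x,r))} \int_{B(x,r)} f\, dm = f(x) \quad \text{for $m$-a.e.\ } x \in \R^{d}.$$
Applied to $f = \chi_{B}$, this yields, for $m$-almost every $x \in B$,
$$\lim_{r \to 0^{+}} \frac{m(B \cap B(x,r))}{m(B(x,r))} = 1 > \kappa.$$
For any such $x$ there is some $r(x) > 0$ with $m(B \cap B(x, \tilde r)) \geq \kappa\, m(B(x, \tilde r))$ for every $\tilde r \leq r(x)$, i.e.\ $x \in B(r(x)) \subset \bigcup_{r > 0} B(r)$. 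Hence $m\bigl(B \setminus \bigcup_{r > 0} B(r)\bigr) = 0$, which gives the claim.

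The only nontrivial ingredient is the differentiation theorem itself, which is classical and which I would treat as a black box. Its standard proof proceeds via the Besicovitch covering theorem: any family of balls in $\R^{d}$ with uniformly bounded radii can be partitioned into $N = N(d)$ subfamilies of pairwise disjoint balls whose centers cover the centers of the original family. This bounded multiplicity is the delicate point, since the ordinary Vitali covering lemma fails for general Radon measures in $\R^{d}$; it supplies the maximal-inequality-type estimates one needs in order to compare upper and lower densities and deduce almost everywhere convergence. Once that black box is accepted, the reduction to Lemma \ref{densibesi} is essentially immediate, as sketched above.
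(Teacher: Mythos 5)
Your proof is correct: the reduction of the density statement to the Besicovitch--Lebesgue differentiation theorem for the (Radon) measure $m$, applied to $f=\chi_B$, is exactly the standard argument behind this classical lemma, which the paper itself does not reprove but simply quotes from \cite{Be}. The only point worth making explicit is that the inequality in the definition of $B(r)$ holds trivially at radii where $m(B(x,\tilde r))=0$, so the a.e.\ convergence of the density ratio to $1>\kappa$ indeed places $m$-almost every $x\in B$ in some $B(r(x))$, as you claim.
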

For $k\in\mathbf{N},$ write $$A_k=\left\{x\in A: \forall r\leq\frac{1}{k}, \  \ \begin{cases}\mu\Big(B(x,r)\cap A\Big)\geq \frac{1}{2}\mu\Big(B(x,r)\Big) \\ \mu\Big(B(x,r)\Big)\geq r^t\end{cases}\right\}.$$ Since $\mu$ is exact $\alpha<t$ exact-dimensional, due to Besicovitch density theorem, one has $$\mu\Big(\bigcup_{k\in\mathbf{N}}A_k\Big)=\mu(A). $$
In addition, for any $k\in\mathbb{N}$, for every $y\in A_k,$ one has 
\begin{align*}
&\sum_{1\leq i,j \leq N} \mu\Big(T^{-i}\Big(B(y,\ell_i)\cap A\Big)\cap T^{-j}\Big(\Big(B(y,\ell_j)\cap A\Big)\Big) \\
&\leq \sum_{1\leq i,j\leq N} \mu\Big(T^{-i}\Big(B(y,\ell_i)\Big)\cap T^{-j}\Big(B(y,\ell_j)\Big)\Big)  
\end{align*}
and for any large enough $Q\in\mathbb{N}$ $$\sum_{Q\leq  n\leq N} \mu\Big(T^{-n}\Big(B(y,\ell_n)\cap A\Big)\Big)\geq \frac{1}{2}\sum_{Q\leq n\leq N} \mu(B(y,\ell_n)) \geq \frac{1}{2}\sum_{Q\leq n\leq N} \ell_n^t .$$
Thus, the same argument in Lemma \ref{thmLLVZ} yields that $$\mu\Big(\limsup_{n\to+\infty}T^{-n}\Big(B(y,\ell_n)\cap A\Big)\Big)>0.$$
Moreover, noticing that $$T^{n}(x)\in B(y,\ell_n)\cap A \implies T^{n-1}(T(x))\in B(y,\ell_{n-1})\cap A,$$
$$x\in \limsup_{n\to+\infty}T^{-n}\Big(B(y,\ell_n)\cap A\Big) \implies T(x)\in \limsup_{n\to+\infty}T^{-n}\Big(B(y,\ell_n)\cap A\Big).$$
By Birkhoff's ergodic's theorem, we conclude that $$\mu\Big(\limsup_{n\to+\infty}T^{-n}\Big(B(y,\ell_n)\cap A\Big)\Big)=1.$$
Since this holds for every $y\in \widetilde{A}:=\bigcup_{k\geq 1}A_k,$ one has 
\begin{align*}
&\int_{\widetilde{A}}  \mu\Big(\limsup_{n\to+\infty}T^{-n}\Big(B(y,\ell_n)\cap A\Big)\Big) d\mu(y)=\mu(A) \Longleftrightarrow \\
&\int_{\widetilde{A}}  \int\chi_{\limsup_{n\to+\infty}T^{-n}\Big(B(y,\ell_n)\cap A\Big)}(x)d\mu(x) d\mu(y)=\mu(A)
\\&\Longleftrightarrow \int_{\widetilde{A}}  \int\chi_{\limsup_{n:T^n(x)\in A}B(T^n(x),\ell_n)}(y)d\mu(x) d\mu(y)=\mu(A) \\
&\Longleftrightarrow \int  \int\chi_{A\cap \limsup_{n:T^n(x)\in A}B(T^n(x),\ell_n)}(y)d\mu(y) d\mu(x)=\mu(A)\\
&\Longleftrightarrow \int\mu\Big(A\cap \limsup_{n:T^n(x)\in A}B(T^n(x),\ell_n)\Big) d\mu(x)=\mu(A)
\end{align*}
which yields that, for $\mu$-almost every $x$, $$\mu\Big(A\cap \limsup_{n:T^n(x)\in A}B(T^n(x),\ell_n)\Big) =\mu(A),$$
which was our statement.
\end{proof}

\subsubsection{Proof of Theorem \ref{stronfullmes}}

\subsection{Applying the mass transference principle for finite measures}

For $\varepsilon>0, $ recall that $s_{\varepsilon}$ is defined by \eqref{defdeltaepsi}. In addition, set  $$\begin{cases}\widetilde{r}_n=r_n^{\frac{1}{\delta_{\varepsilon}}} \\ t_{\varepsilon}=\alpha +\varepsilon\end{cases}$$
We are going to establish the following result.
\begin{theoreme}
\label{AppliMTPexa}
 For $\mu$-almost every $x$, for every $\varepsilon>0,$ one has $$\mu\Big(\limsup_{n: \mathcal{H}^{\mu,s_{\varepsilon}}(\widering{B(T^{n}(x),r_n)}\geq \mu(B(T^{n}(x),\frac{1}{2}\widetilde{r}_n))}B(T^{n}(x),\frac{1}{2}\widetilde{r}_n)\Big)=1.$$ 
\end{theoreme}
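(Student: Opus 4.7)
The plan is to exhibit a nested family of regularity sets on which the essential-content lower bound is automatic, apply Theorem \ref{stronfullmes} on each such set via its induced map, and conclude by letting the regularity parameter tend to infinity. Throughout, I write $\alpha=\dim\mu$ and fix $\varepsilon>0$ small.

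First I would introduce, for each integer $K\geq 1$, the regularity set
\begin{equation*}
G_K=\bigl\{y\in\supp(\mu):\ r^{\alpha+\varepsilon/2}\leq\mu(B(y,r))\leq r^{\alpha-\varepsilon/2}\text{ for all }0<r\leq 1/K\bigr\}
\end{equation*}
together with its Besicovitch-density refinement
\begin{equation*}
\widetilde{G}_K=\bigl\{y\in G_K:\ \mu(B(y,r)\cap G_K)\geq\tfrac{1}{2}\mu(B(y,r))\text{ for all }0<r\leq 1/K\bigr\}.
\end{equation*}
Since $\mu$ is exact-dimensional one has $\mu(\bigcup_K G_K)=1$, and applying Lemma \ref{densibesi} to each $G_K$ then yields $\mu(\widetilde{G}_K)\to 1$ as $K\to+\infty$.

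The core of the argument, and the step I expect to be the main obstacle, is the following essential-content inequality: for every $y\in\widetilde{G}_K$ and every $n$ sufficiently large that $2r_n\leq 1/K$,
\begin{equation*}
\mathcal{H}^{\mu,s_\varepsilon}_\infty\bigl(\widering{B(y,r_n)}\bigr)\geq\mu\bigl(B(y,\widetilde{r}_n/2)\bigr).
\end{equation*}
To prove it I would take any $E\subset\widering{B(y,r_n)}$ with $\mu(E)=\mu(\widering{B(y,r_n)})$; the $\widetilde{G}_K$-condition then forces $\mu(E\cap G_K)\geq\tfrac{1}{2}\mu(B(y,r_n))$ (up to a harmless boundary-mass issue handled by shrinking $r_n$ slightly). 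For any cover $(B_i)$ of $E$ by balls of diameter at most $2r_n$, every $B_i$ meeting $G_K$ can be enlarged to a ball $B(y_i,|B_i|)$ centered at a point $y_i\in B_i\cap G_K$, and the upper regularity of $\mu$ on $G_K$ yields $\mu(B_i)\leq|B_i|^{\alpha-\varepsilon/2}$. Since the arithmetic of \eqref{defdeltaepsi} gives $s_\varepsilon<\alpha-\varepsilon/2$, the factorization $|B_i|^{\alpha-\varepsilon/2}\leq(2r_n)^{\alpha-\varepsilon/2-s_\varepsilon}|B_i|^{s_\varepsilon}$ produces
\begin{equation*}
\tfrac{1}{2}\mu(B(y,r_n))\leq\mu(E\cap G_K)\leq(2r_n)^{\alpha-\varepsilon/2-s_\varepsilon}\sum_i|B_i|^{s_\varepsilon}.
\end{equation*}
Combining this with the lower regularity $\mu(B(y,r_n))\geq r_n^{\alpha+\varepsilon/2}$ and the elementary inequality $(\alpha-\varepsilon)^2<(\alpha+\varepsilon)(\alpha-\varepsilon/2)$, which is precisely $s_\varepsilon+\varepsilon<(\alpha-\varepsilon/2)/\delta_\varepsilon$ and holds for $\varepsilon$ small, yields the desired bound for all $n$ large.

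With this lemma in hand, the conclusion is immediate. By \eqref{sumdivmes} we have $\sum_n(\widetilde{r}_n/2)^{\alpha+\varepsilon}=+\infty$, so applying Theorem \ref{stronfullmes} to the set $A=\widetilde{G}_K$ with the radii $(\widetilde{r}_n/2)_n$ gives that, for $\mu$-almost every $x$,
\begin{equation*}
\mu_{\widetilde{G}_K}\Bigl(\limsup_{k\to+\infty}B\bigl(T_{\widetilde{G}_K}^k(x),\widetilde{r}_{n_{\widetilde{G}_K}^k(x)}/2\bigr)\Bigr)=1.
\end{equation*}
At each hitting time $n=n_{\widetilde{G}_K}^k(x)$ one has $T^n(x)\in\widetilde{G}_K$, so the essential-content condition from the lemma holds for all but finitely many such $n$; consequently the limsup appearing in the statement of Theorem \ref{AppliMTPexa} contains this subsequence and therefore carries $\mu$-mass at least $\mu(\widetilde{G}_K)$. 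Taking a countable intersection over $K\to+\infty$ and using $\mu(\widetilde{G}_K)\to 1$ completes the proof.
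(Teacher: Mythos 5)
Your proposal is correct and follows essentially the same route as the paper: regularity sets with a Besicovitch-density refinement (the paper's $A_p,\widetilde{A}_p$, your $G_K,\widetilde{G}_K$), a deterministic lower bound on $\mathcal{H}^{\mu,s_{\varepsilon}}_{\infty}$ for balls centred in the refined set, and an application of Theorem \ref{stronfullmes} to the induced system with radii $\tfrac{1}{2}\widetilde{r}_n$ and exponent $t=\alpha+\varepsilon$, letting the regularity parameter grow. The only cosmetic difference is that you derive the content bound via the factorization $\vert B_i\vert^{\alpha-\varepsilon/2}\leq (2r_n)^{\alpha-\varepsilon/2-s_{\varepsilon}}\vert B_i\vert^{s_{\varepsilon}}$, where the paper uses a convexity (subadditivity) inequality to get $\mathcal{H}^{\mu,s}_{\infty}(B(y,r))\gtrsim \mu(B(y,r))^{s/(\alpha-\varepsilon)}$; both yield the same estimate.
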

Assuming that Theorem \ref{AppliMTPexa} holds true, Theorem \ref{mainthm} readily follows from the mass transference principle for finite measure, Theorem \ref{zzani} and letting $\varepsilon\to 0$ along a countable sequence. So we now prove Theorem \ref{AppliMTPexa}.

For $n\in\mathbb{N},$ write \begin{align}
& A_n=\left\{x\in\mathbb{R}^d\text{ for every }r\leq \frac{1}{n} , \ r^{\alpha+\varepsilon}\leq \mu\Big(B(x,r)\Big)\leq r^{\alpha-\varepsilon}\right\}\nonumber \\
&\widetilde{A}_n =\left\{x\in A_n : \ \text{ for every }r\leq \frac{1}{n},
\mu\Big(B(x,r)\cap A_n\Big)\geq \frac{1}{2}\mu\Big(B(x,r)\Big) \right\}.
\end{align}
\begin{lemme}
\label{doublebesi}
One has $\mu\Big(\bigcup_{n\geq 1} \widetilde{A}_n\Big)=1.$
\end{lemme}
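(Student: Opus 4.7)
The plan is to combine two classical ingredients: the exact-dimensionality of $\mu$, which controls the size of $\mu$-balls and produces the outer union, and Besicovitch's density theorem (Lemma \ref{densibesi}), which produces density points of each $A_m$. First I would observe that the sets $A_n$ form an increasing sequence in $n$: if $m \leq n$, then the defining condition for $A_m$ is imposed over the larger range $r \leq 1/m$ of radii, so $A_m \subset A_n$. Next, since $\mu$ is exact-dimensional of dimension $\alpha$, for $\mu$-almost every $x$ one has $\lim_{r \to 0^+}\log\mu(B(x,r))/\log r = \alpha$, hence for each such $x$ there exists $n_x \in \mathbb{N}$ with $x \in A_{n_x}$. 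This already gives $\mu(\bigcup_{n \geq 1} A_n) = 1$.

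Then, for each fixed $m$, I would apply Besicovitch's density theorem to the measure $\mu$, the Borel set $A_m$, and threshold $\kappa = 1/2$. This yields that $\mu$-almost every $x \in A_m$ admits some $r_x > 0$ for which $\mu(B(x,\tilde r) \cap A_m) \geq \tfrac{1}{2}\mu(B(x,\tilde r))$ holds for all $\tilde r \leq r_x$. Choosing any integer $n \geq \max(m, 1/r_x)$ and exploiting the nesting $A_m \subset A_n$, I obtain, for every $\tilde r \leq 1/n$,
$$ \mu(B(x,\tilde r) \cap A_n) \;\geq\; \mu(B(x,\tilde r) \cap A_m) \;\geq\; \tfrac{1}{2}\mu(B(x,\tilde r)). $$
Since also $x \in A_m \subset A_n$, this means $x \in \widetilde A_n$. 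Taking the union over $m$ produces $\mu\big(\bigcup_{n \geq 1}\widetilde A_n\big) = \mu\big(\bigcup_{m \geq 1}A_m\big) = 1$, as desired.

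The only routine verification required is measurability of $A_n$ and $\widetilde A_n$, which follows because for fixed $x$ the map $r \mapsto \mu(B(x,r))$ is non-decreasing with at most countably many discontinuities, so the uncountable quantification over $r \leq 1/n$ reduces to a countable dense set. I do not anticipate any real obstacle here: the statement is a soft combination of exact-dimensionality and the Besicovitch density property, the key trick being that enlarging $A_m$ to $A_n$ can only increase the density of the intersection inside any ball, so a density point of $A_m$ is automatically a density point of $A_n$.
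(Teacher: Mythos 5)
Your argument is correct and is essentially the paper's proof: exact-dimensionality gives $\mu(\bigcup_n A_n)=1$, Besicovitch's density theorem (Lemma \ref{densibesi}) with $\kappa=\tfrac12$ gives density points of each $A_m$ at small scales, and the monotonicity $A_m\subset A_n$ for $m\leq n$ upgrades such a point to membership in $\widetilde A_n$ for $n$ large (the paper phrases this via the intermediate sets $A_{n,p}\subset\widetilde A_p$). No substantive difference.
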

\begin{proof}
Since $\mu$ is $\alpha$-exact dimensional, one has $$\mu\Big(\bigcup_{n\geq 1} A_n\Big)=1.$$
Given $p\geq n,$ write $$A_{n,p}=\left\{x\in A_n : \text{ for every r}\leq \frac{1}{p}, \ \mu\Big(B(x,r)\cap A_n\Big)\geq \frac{1}{2}\mu\Big(B(x,r)\Big)\right\}.$$
Notice that, since $A_n \subset A_p,$ $A_{n,p}\subset \widetilde{A}_p.$ In addition, by Besicovitch density theorem (Lemma \ref{densibesi}), for every $n\in\mathbb{N},$ one has $$\mu\Big(\bigcup_{p\geq n}A_{n,p}\Big)=\mu\Big(A_n\Big).$$
We conclude that $$\mu\Big(\bigcup_{p\geq 1}\widetilde{A}_p\Big)=1.$$
\end{proof}
Given $x\in\mathbb{R}^d$ and $0\leq s\leq d,$ let us denote $$\mathcal{N}_{x,s}:=\left\{n\in\mathbb{N}: \mathcal{H}^{\mu,s}_{\infty}\Big(\widering{B(T^n(x),r_n)}\Big)\geq \mu\Big(B(T^n(x),\widetilde{r}_n)\Big)\right\}.$$

\begin{lemme}
\label{exaesticont}
For $\mu$-almost every $x$, for every $\varepsilon>0,$ one has $$\mu\Big(\limsup_{n\in\mathcal{N}_{x,s_{\varepsilon}}}B(T^n(x),\frac{1}{2}\widetilde{r}_n)\Big)=1.$$
\end{lemme}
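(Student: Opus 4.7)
The plan is to combine Theorem \ref{stronfullmes}, applied to each of the density sets $\widetilde{A}_N$ from Lemma \ref{doublebesi}, with a pointwise verification that whenever the orbit of $x$ visits $\widetilde{A}_N$ at a sufficiently large time $n$, this $n$ automatically belongs to $\mathcal{N}_{x,s_{\varepsilon}}$. Since $\mu(\bigcup_{N}\widetilde{A}_N)=1$, the desired full $\mu$-measure statement will follow by letting $N\to\infty$.

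Fix $N$ with $\mu(\widetilde{A}_N)>0$ and apply Theorem \ref{stronfullmes} to $A=\widetilde{A}_N$ and the radii $\widetilde{\ell}_n:=\widetilde{r}_n/2$. By \eqref{sumdivmes} one has $\sum_{n}\widetilde{\ell}_n^{\,t_{\varepsilon}}=+\infty$ with $t_{\varepsilon}=\alpha+\varepsilon>\dim\mu$, so for $\mu$-almost every $x$,
\[
\mu_{\widetilde{A}_N}\Bigl(\limsup_{n:\,T^n(x)\in\widetilde{A}_N}B\bigl(T^n(x),\tfrac{1}{2}\widetilde{r}_n\bigr)\Bigr)=1,
\]
where I have already rewritten the induced-map limsup in terms of $T$ via the definition of the hitting times. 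If one verifies the pointwise implication
\[
(\star)\qquad\exists\,n_0(x,N)\in\mathbb{N}:\ n\geq n_0\text{ and }T^n(x)\in\widetilde{A}_N\ \Longrightarrow\ n\in\mathcal{N}_{x,s_{\varepsilon}},
\]
then the index set in the previous display sits (up to finitely many exceptions) inside $\mathcal{N}_{x,s_{\varepsilon}}$. Thus $\mu\bigl(\widetilde{A}_N\cap\limsup_{n\in\mathcal{N}_{x,s_{\varepsilon}}}B(T^n(x),\widetilde{r}_n/2)\bigr)=\mu(\widetilde{A}_N)$, and letting $N\to\infty$ (using $\mu(\bigcup_N\widetilde{A}_N)=1$) concludes the proof.

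The heart of the argument is thus the pointwise claim $(\star)$. Fix $n$ so large that $y:=T^n(x)\in\widetilde{A}_N$ and both $r_n$ and $\widetilde{r}_n$ are at most $1/N$ (possible since $r_n\to 0$ and $\delta_{\varepsilon}$ is finite). For any Borel $E\subset\widering{B(y,r_n)}$ with $\mu(E)=\mu(\widering{B(y,r_n)})$, the density property built into $\widetilde{A}_N$ gives $\mu(E\cap A_N)\geq\tfrac{1}{2}\mu(B(y,r_n))\geq\tfrac{1}{2}r_n^{\alpha+\varepsilon}$. I would estimate $\mathcal{H}^{s_{\varepsilon}}_\infty(E)$ from below by splitting an arbitrary ball cover $(B_i)_i$ of $E$ into two regimes: if some $|B_i|\geq r_n$, then $\sum_j|B_j|^{s_{\varepsilon}}\geq r_n^{s_{\varepsilon}}\geq\widetilde{r}_n^{\alpha-\varepsilon}\geq\mu(B(y,\widetilde{r}_n))$ because by construction $s_{\varepsilon}<(\alpha-\varepsilon)/\delta_{\varepsilon}$; otherwise every $|B_i|<r_n\leq 1/N$, and any $B_i$ meeting $A_N$ at a point $y_i$ is contained in the sup-norm ball $B(y_i,|B_i|)$, so $\mu(B_i)\leq|B_i|^{\alpha-\varepsilon}$ by definition of $A_N$. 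Combining
\[
\tfrac{1}{2}\mu(B(y,r_n))\leq\mu(E\cap A_N)\leq\sum_{i:\,B_i\cap A_N\neq\emptyset}|B_i|^{\alpha-\varepsilon}
\]
with the elementary interpolation $|B_i|^{\alpha-\varepsilon}\leq r_n^{(\alpha-\varepsilon)-s_{\varepsilon}}|B_i|^{s_{\varepsilon}}$ (valid since $s_{\varepsilon}\leq\alpha-\varepsilon$ and $|B_i|\leq r_n$) yields $\sum_i|B_i|^{s_{\varepsilon}}\geq\tfrac{1}{2}r_n^{s_{\varepsilon}+2\varepsilon}$, and the specific shape of $s_{\varepsilon}=\frac{(\alpha-\varepsilon)^2}{(\alpha+\varepsilon)\delta_{\varepsilon}}-\varepsilon$ recorded in \eqref{defdeltaepsi} is designed precisely so that this lower bound still exceeds $\mu(B(y,\widetilde{r}_n))\leq\widetilde{r}_n^{\alpha-\varepsilon}=r_n^{(\alpha-\varepsilon)/\delta_{\varepsilon}}$ for $n$ large. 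The main obstacle is exactly this closing exponent comparison: because the natural lower- and upper-exact-dimensional bounds for $\mu$ on balls centered on $A_N$ differ by $2\varepsilon$, the target $s_{\varepsilon}$ has to lie strictly below $(\alpha-\varepsilon)/\delta_{\varepsilon}$ with a definite margin, yet still converge to $s_{\underline{r}}$ as $\varepsilon\to 0$; the remaining items (enlarging by a factor two to recenter sup-norm balls on $A_N$, bookkeeping of multiplicative constants, and the change of indexing from induced-map hitting times to orbit indices) are routine.
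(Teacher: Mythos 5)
Your overall reduction is the same as the paper's: apply Theorem \ref{stronfullmes} to the density sets $\widetilde{A}_N$ of Lemma \ref{doublebesi} with radii $\frac{1}{2}\widetilde{r}_n$, check that large return times to $\widetilde{A}_N$ automatically belong to $\mathcal{N}_{x,s_{\varepsilon}}$, and let $N\to\infty$. The genuine gap is in the step you yourself single out as the heart of the matter, namely the lower bound on the essential content in your second regime. Replacing $\vert B_i\vert^{\alpha-\varepsilon}$ by $r_n^{(\alpha-\varepsilon)-s_{\varepsilon}}\vert B_i\vert^{s_{\varepsilon}}$ costs an \emph{additive} $2\varepsilon$ in the exponent: you obtain $\sum_i\vert B_i\vert^{s_{\varepsilon}}\geq\frac{1}{2}r_n^{s_{\varepsilon}+2\varepsilon}$, while to conclude $n\in\mathcal{N}_{x,s_{\varepsilon}}$ you must dominate $\mu(B(y,\widetilde{r}_n))$, which for $y\in A_N$ can be as large as $\widetilde{r}_n^{\alpha-\varepsilon}=r_n^{(\alpha-\varepsilon)/\delta_{\varepsilon}}$. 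You therefore need $s_{\varepsilon}+2\varepsilon<(\alpha-\varepsilon)/\delta_{\varepsilon}$; but with the values of \eqref{defdeltaepsi} one computes $\frac{\alpha-\varepsilon}{\delta_{\varepsilon}}-(s_{\varepsilon}+2\varepsilon)=\varepsilon\Big(\frac{2(\alpha-\varepsilon)(s_{\underline{r}}-\varepsilon)}{(\alpha+\varepsilon)^2}-1\Big)$, which is negative as soon as $s_{\underline{r}}$ is roughly below $\alpha/2$ (e.g.\ $\alpha=1$, $s_{\underline{r}}=0.2$, $\varepsilon=0.01$ gives $s_{\varepsilon}+2\varepsilon\approx 0.193$ against $(\alpha-\varepsilon)/\delta_{\varepsilon}\approx 0.186$). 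In that regime the inequality $\frac{1}{2}r_n^{s_{\varepsilon}+2\varepsilon}\geq r_n^{(\alpha-\varepsilon)/\delta_{\varepsilon}}$ fails for all small $r_n$, so the claim that the shape of $s_{\varepsilon}$ is ``designed precisely'' for your interpolation is not correct; and since $\mathcal{N}_{x,s}$ shrinks as $s$ increases, proving the statement for a smaller exponent would not yield the lemma as stated.

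The repair is the paper's convexity (subadditivity) step, which is what $s_{\varepsilon}$ is actually calibrated for: keep the mass inequality $\frac{1}{2}\mu(B(y,r_n))\leq\sum_{i:B_i\cap A_N\neq\emptyset}\mu(B_i)$, bound each term by $\mu(B_i)\leq 2\vert B_i\vert^{\alpha-\varepsilon}$, i.e.\ $\vert B_i\vert^{s_{\varepsilon}}\geq(\mu(B_i)/2)^{s_{\varepsilon}/(\alpha-\varepsilon)}$, and use that $t\mapsto t^{s_{\varepsilon}/(\alpha-\varepsilon)}$ is subadditive (the exponent is at most $1$) to get $\sum_i\vert B_i\vert^{s_{\varepsilon}}\geq c\,\mu(B(y,r_n))^{s_{\varepsilon}/(\alpha-\varepsilon)}\geq c\,r_n^{s_{\varepsilon}\frac{\alpha+\varepsilon}{\alpha-\varepsilon}}$. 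The exponent loss is now \emph{multiplicative}, and $s_{\varepsilon}\frac{\alpha+\varepsilon}{\alpha-\varepsilon}=\frac{\alpha-\varepsilon}{\delta_{\varepsilon}}-\varepsilon\frac{\alpha+\varepsilon}{\alpha-\varepsilon}<\frac{\alpha-\varepsilon}{\delta_{\varepsilon}}$ for every $0<s_{\underline{r}}\leq\alpha$, the leftover factor $r_n^{-\varepsilon(\alpha+\varepsilon)/(\alpha-\varepsilon)}\to+\infty$ absorbing all multiplicative constants, including the $\frac{1}{2}$ and the boundary issue in your assertion $\mu(E\cap A_N)\geq\frac{1}{2}\mu(B(y,r_n))$ for $E$ of full measure in the open ball (apply the density property at radius $r_n/2$ instead of $r_n$). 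Your regime 1 (a cover element of diameter at least $r_n$), the application of Theorem \ref{stronfullmes} to $\widetilde{A}_N$, and the passage $N\to\infty$ are fine and match the paper.
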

\begin{proof}
As a consequence of Theorem \ref{stronfullmes}  applied $(\frac{1}{2}\widetilde{r}_n)_{n\in\mathbb{N}}$ and $t_{\varepsilon}$, for $\mu$-almost every $x$, one has for every $p\in\mathbb{N},$ $$\mu_{A_p}\Big(\limsup_{k\to+\infty}B(T_{A_p}^k(x),\frac{1}{2}\widetilde{r}_{n_{A_p}^k(x)})\Big)=1.$$ 
Fix $0<u\leq 1$ and $p\in\mathbb{N}$ so large that $\mu\Big(\widetilde{A}_p\Big)\geq u.$ Now fix any $y\in \widetilde{A}_p$ and any $r\leq \frac{1}{2p.}$ By definition of $\widetilde{A}_p,$ one has $$\mu(B(y,r))\geq \frac{1}{2} r^{\dim (\mu) +\varepsilon}.$$
Let us fix $0\leq s\leq \dim(\mu) -\varepsilon$, a set $E\subset B(y,r)$ with $\mu(E)=\mu(B(y,r))$ and a covering $(C_n)_{n\in\mathbb{N}}$ of $E$ by balls of radii smaller that $r\leq \frac{1}{2p}.$ Recall that $$\mu(B(y,r)\cap A_p)\geq \frac{1}{2}\mu(B(y,r)).$$
In addition, for any $n\in\mathbb{N}$ such that $C_n \cap A_p \neq \emptyset,$ one has $$\mu(C_n)\leq 2\vert C_n \vert^{\dim(\mu) -\varepsilon}.$$
This, using a convexity argument, yields 
\begin{align*}
\sum_{n\geq 1}\vert C_n \vert^s &\geq \sum_{n : \ C_n \cap A_p \neq \emptyset}\vert C_n \vert^{(\dim(\mu) -\varepsilon)\times \frac{s}{\dim(\mu)-\varepsilon}}\geq \Big(\sum_{n : \ C_n \cap A_p \neq \emptyset} \mu(C_n)\Big)^{ \frac{s}{\dim(\mu)-\varepsilon}}\\
&\geq \mu\Big(E\cap A_p \cap B(y,r)\Big)^{ \frac{s}{\dim(\mu)-\varepsilon}}\geq \frac{1}{2^d}\mu\Big( B(y,r)\Big)^{ \frac{s}{\dim(\mu)-\varepsilon}}\geq \frac{1}{2^{d+1}}r^{s\times \frac{\dim(\mu) +\varepsilon}{\dim(\mu)-\varepsilon}}.
\end{align*} 
Since the above estimate holds for any covering  of $E\subset B(y,r),$ with $\mu(E)=\mu(B(y,r)),$ taking $s=s_{\varepsilon}=\frac{(\alpha-\varepsilon)}{\delta_{\varepsilon}}\times \frac{\dim(\mu) -\varepsilon}{\dim(\mu)+\varepsilon}-\varepsilon,$  one obtains  $$\mathcal{H}^{\mu,s_{\varepsilon}}_{\infty}\Big(B(y,r)\Big)\geq r^{\frac{\alpha-\varepsilon}{\delta_{\varepsilon}}}.$$
Recall that the above estimate holds for every small enough $r$, hence it also holds that $$\mathcal{H}^{\mu,s_{\varepsilon}}_{\infty}\Big(\widering{B(y,r)}\Big)\geq r^{\frac{\alpha-\varepsilon}{\delta_{\varepsilon}}}.$$

In particular, given $x$, applying the above estimates to  $\widering{B(T^{n_A^k(x)}(x),r_{n_A^k(x)})}$ for every $k\in\mathbb{N}$   so large that $\frac{1}{2^d}\leq r_k^{-\varepsilon}$ and $r_{k}^{\frac{1}{\delta_{\varepsilon}}}\leq \frac{1}{p}, $ one has $$\mathcal{H}^{\mu,s_{\varepsilon}}_{\infty}\Big(\widering{B(T^{n_A^k(x)}(x),r_{n_A^k(x)})}\Big)\geq \Big(r_{n_A^k(x)}\Big)^{\frac{\alpha-\varepsilon}{\delta_{\varepsilon}}}\geq \mu\Big(B(T^{n_A^k(x)}(x),\widetilde{r}_{n_A^k(x)})\Big).$$
This proves that $$\mu\Big(\limsup_{n\in\mathcal{N}_{x,s_{\varepsilon}}}B(T^n(x),\frac{1}{2}\widetilde{r}_n)\Big)\geq \mu(A_p)\geq u.$$
Letting $u\to 1$ proves the claim.
\end{proof}

\section{Proof of Theorem \ref{MainthmNonexa}}
\label{sec-MainthmNonexa}

In this section, one will establish  Theorem \ref{MainthmNonexa} under  a slightly weaker mixing assumption, which,  in practice, should be more convenient to use. The result we are proving is the following one. 

\begin{theoreme}
\label{MainthmNonexabis}
Let $(T,\mu)$ be an ergodic system with $\overline{\dim}_H \mu>0$. Assume that there exists a Borel set $E$ with $\mu(E)=1$ and a measurable mapping $\rho:E\to \mathbb{R}_+$ such that, writing $$\mathcal{C}_1=\left\{D_n(x), \ B(x,r) : \ 2^{-n},r \leq \rho(x),x\in E\right\},$$
$(T,\mu)$ is $\phi$-$(\mathcal{C}_1,\mathcal{B}(\mathbb{R}^d),\kappa)$ mixing, where $\phi:\mathbb{N}\to \mathbb{R}_+$ is super-polynomial. Then, for $\mu$-almost every $x$, for every $\delta >\frac{1}{\overline{\dim}_H \mu},$ one has $$\dim_H \left\{y : \ \vert\vert y-T^n(x)\vert\vert_{\infty}\leq \frac{1}{n^{\delta}}\text{ i.o. }\right\}=\frac{1}{\delta}.$$
\end{theoreme}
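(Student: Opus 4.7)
The upper bound $\dim_H \leq 1/\delta$ follows from the usual covering argument: for any $s > 1/\delta$ the limsup set is covered by $\bigcup_{n \geq N} B(T^n(x), 1/n^\delta)$ for every $N$, and $\sum_n (2/n^\delta)^s < \infty$ shows that its $s$-dimensional Hausdorff measure vanishes.

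For the lower bound, the plan is to imitate the proof of Theorem \ref{mainthm}, with the exact-dimensional hypothesis replaced by a localisation on a Borel subset where the local dimension is essentially constant. Fix $\varepsilon > 0$ small enough that $\delta > 1/(\overline{\dim}_H \mu - 3\varepsilon)$ and set $\alpha = \overline{\dim}_H \mu - \varepsilon$. Combining the characterisation of $\overline{\dim}_H \mu$ as an essential supremum of $\underline{\dim}_{\mathrm{loc}}(\mu,\cdot)$, Egorov's theorem, and Besicovitch's density theorem (Lemma \ref{densibesi}), I would extract a Borel set $A \subset E$ with $\mu(A) > 0$ and a threshold $\rho_0 > 0$ such that every $y \in A$ and every $r \leq \rho_0$ satisfy $r \leq \rho(y)$, $\mu(B(y,r)) \leq r^{\alpha-\varepsilon}$, and $\mu(B(y,r) \cap A) \geq \tfrac{1}{2}\mu(B(y,r))$. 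Then set $r_n = 1/n^\delta$, $\delta_\varepsilon = (\alpha+\varepsilon)/(1/\delta-\varepsilon)$, $\tilde r_n = r_n^{1/\delta_\varepsilon}$, and let $s_\varepsilon$ be the critical exponent of \eqref{defdeltaepsi}, for which $s_\varepsilon \to 1/\delta$ and $\delta_\varepsilon > 1$ as soon as $\alpha > 1/\delta$.

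The argument then splits into two steps. First, I would establish the full-$\mu_A$-measure statement
$$\mu_A\bigl(\limsup_{n : T^n(x) \in A} B(T^n(x), \tilde r_n) \cap A\bigr) = \mu(A) \quad \text{for $\mu$-a.e. } x,$$
repeating the Chung--Erd\H{o}s / Poincar\'e argument from Theorem \ref{stronfullmes}, applied to $A_n = T^{-n}(B(y, \tilde r_n) \cap A)$: the $\phi$-mixing hypothesis bounds $\mu(A_i \cap A_j)$ by $\kappa \mu(A_i)\mu(A_j) + \phi(|j-i|)\mu(A_j)$, and super-polynomiality of $\phi$ makes the error term summable in a bilinear fashion. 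Second, I would apply the mass transference principle for finite measures (Theorem \ref{zzani}) along the sub-sequence of times at which $T^n(x) \in A$, with contraction exponent $\delta_\varepsilon$: the convexity computation from Lemma \ref{exaesticont}, which uses only the upper bound $\mu(B(y,r)) \leq r^{\alpha-\varepsilon}$, delivers
$$\mathcal{H}^{\mu, s_\varepsilon}_\infty\bigl(\widering{B(T^n(x),\tilde r_n)}^{\delta_\varepsilon}\bigr) \geq \mu\bigl(B(T^n(x), \tilde r_n)\bigr),$$
whence Theorem \ref{zzani} yields $\dim_H \limsup B(T^n(x), r_n) \geq s_\varepsilon$. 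Letting $\varepsilon \to 0$ along a countable sequence closes the argument.

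The main obstacle, which distinguishes this proof from that of Theorem \ref{mainthm}, is that under the sole hypothesis $\underline{\dim}_{\mathrm{loc}}(\mu, y) \geq \alpha$ on $A$ there is no pointwise lower bound $\mu(B(y,r)) \geq r^{\alpha+\varepsilon}$. Such a bound was used in the exact-dimensional proof both to make the Chung--Erd\H{o}s series $\sum_n \mu(B(y,\tilde r_n))$ diverge and to complete the content inequality feeding Theorem \ref{zzani}. To overcome this I would appeal to Proposition \ref{LocdimMix}, whose hypotheses are guaranteed by the super-polynomial mixing assumption: it supplies the sharp hitting-time asymptotic $\tau_r(x,y) \asymp \mu(B(y,r))^{-1}$, replacing the divergence of the naive Borel--Cantelli series by a quantitative orbit-visit count along the sub-sequence $\{n : T^n(x) \in A\}$, and to perform a secondary Egorov refinement of $A$ at the deterministic scales $(\tilde r_n)_{n \in \mathbb{N}}$, yielding a further subset of positive measure along which the missing bound $\mu(B(y, \tilde r_n)) \geq \tilde r_n^{\alpha+\varepsilon}$ holds at the relevant discrete scales. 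This is the technically delicate part: the scales $\tilde r_n$ are deterministic, whereas the lower bound on $\mu(B(y,\cdot))$ only holds along $y$-dependent subsequences, so the refinement must be carried out carefully using the joint measurability provided by the super-polynomial mixing before the mass transference principle can be invoked.
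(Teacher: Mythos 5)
Your overall strategy (localise on a positive-measure set $A$ where the lower local dimension is close to $\overline{\dim}_H\mu$, prove a full-$\mu_A$-measure covering statement via Chung--Erd\H{o}s, then apply Theorem \ref{zzani}) is indeed the strategy of the paper, and your diagnosis of the obstacle is the correct one; the gap lies in the proposed repair. The ``secondary Egorov refinement of $A$ at the deterministic scales $(\widetilde r_n)_{n\in\mathbb N}$, yielding a further subset of positive measure along which $\mu(B(y,\widetilde r_n))\geq \widetilde r_n^{\,\alpha+\varepsilon}$'' does not exist in general: consecutive radii $\widetilde r_n,\widetilde r_{n+1}$ are comparable, so such a bound for all large $n$ forces $\overline{\dim}_{\rm loc}(\mu,y)\leq\alpha+\varepsilon$ on that subset, i.e. essentially the exact-dimensionality (or at least $\overline{\dim}_P\mu_A\approx\overline{\dim}_H\mu_A$) that the theorem is meant to dispense with. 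Egorov can only uniformise a pointwise statement that is true, and here the pointwise statement is false whenever the upper local dimension exceeds $\alpha$ almost everywhere. Proposition \ref{LocdimMix} cannot substitute for it either: it says the orbit first enters $B(y,\widetilde r_n)$ at a time of order $\mu(B(y,\widetilde r_n))^{-1}$, and at the bad scales this time is much larger than $n$, so the alignment between the time index and the deterministic radius index is destroyed; yet that alignment is what both the divergence of the Chung--Erd\H{o}s series and the content inequality $\mathcal H^{\mu_A,s_\varepsilon}_\infty\bigl(B(T^n(x),n^{-\delta})\bigr)\geq\mu_A\bigl(B(T^n(x),\widetilde r_n)\bigr)$ require. (Note also that this last inequality does \emph{not} follow from the upper regularity alone, as claimed in your second step: the convexity argument of Lemma \ref{exaesticont} bounds the content from below by a power of $\mu_A(B(T^n(x),n^{-\delta}))$, and one still needs a lower bound on that measure at the orbit point.)

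This is precisely why the paper's proof of Theorem \ref{MainthmNonexabis} abandons deterministic radii altogether. It works along a $y$-dependent sequence of good dyadic scales $\mathcal K_y$, at which the cube containing $y$ has measure at least the correct power of its diameter (such scales occur infinitely often because only the \emph{lower} local dimension is controlled); it replaces the ball target by the set $F_{y,k,\rho,\delta,\varepsilon}$ of Lemma \ref{LemmaEdelta}, a union of polynomially many dyadic cubes each of $\mu_A$-measure at least that of the cube containing $y$, so that when the orbit lands there the ball centred at $T^n(x)$ inherits a measure lower bound from $y$; it requires the modified hitting-time Lemma \ref{ModiLocMix} for these cube unions, and this, rather than Proposition \ref{LocdimMix}, is where super-polynomial (not merely summable) mixing is genuinely needed, to absorb the polynomial factor counting the cubes; and it feeds Theorem \ref{zzani} with balls of measure-defined radii $n^{-\theta_n(x)}$, well defined because ball boundaries are $\mu_A$-null (Lemma \ref{Lemmarestri}) and compared with $B(y,2^{-k})$ via the almost-doubling Lemma \ref{AlmostDoubLemma}, instead of the balls $B(T^n(x),\widetilde r_n)$. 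Without some device of this kind replacing your Egorov step, the proposal does not go through.
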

Note that that Theorem \ref{MainthmNonexabis} readily implies Theorem \ref{MainthmNonexa} by taking $E=\mathbb{R}^d$, $\rho(x)=C,$ where $C>0$ for any $x$.

\subsubsection{Dynamical coverings on induced systems}

\subsection{Scheme of proof of Theorem \ref{MainthmNonexabis}}
\label{Sec-Scheme}

In \cite{JJMS}, the authors highlighted that, to estimate the Hausdorff dimension of random limsup sets, a fruitful strategy is to transfer (using Fubini's theorem) distribution properties of the sequence of random points (or an orbit in our case) around typical points to a property of the random sequence of balls itself. As a starting point  to establish Theorem \ref{MainthmNonexabis}, we will follow this idea. More precisely, we will prove that there exists a Borel set $A$ with $\mu(A)>0$ and a measurable mapping $\theta_n$ such that for $\mu_A$-almost every $y$, for $\mu$-almost every $x$, for $s\approx \frac{1}{\delta},$ writing 
\begin{equation}
    \label{DefThmNxs}
    \mathcal{N}(x,s,\delta)=\left\{n: \ \mathcal{H}^{\mu_A,s}_{\infty}\Big(B(T^{n}(x),\frac{1}{n^\delta})\Big)\geq \mu_A(B(T^n(x),\frac{1}{n^{\theta_n(x)}}))\right\}
\end{equation}

 one has 
\begin{equation}
\label{argumentclé}
y\in\limsup_{n\in\mathcal{N}(x,s,\delta)}B\Big(T^n(x),\frac{1}{2}\frac{1}{n^{\theta_n(x)}}\Big).
\end{equation}

If the following statement holds, then, one has 
\begin{align*}
&\Leftrightarrow\int\int \mu_A\left\{y\in\limsup_{n\in\mathcal{N}(x,s,\delta)}B\Big(T^n(x),\frac{1}{2}\frac{1}{n^{\theta_n(x)}}\Big)\right\}d\mu(x)=1 \\
&\Leftrightarrow \int\int \chi_{\limsup_{n\in\mathcal{N}(x,s,\delta)}B(T^n(x),\frac{1}{2}\frac{1}{n^{\theta_n(x)}})}(y)d\mu_A(y)d\mu(x)=1 \\
&\Leftrightarrow \int\int \chi_{\limsup_{n\in\mathcal{N}(x,s,\delta)}B(T^n(x),\frac{1}{2}\frac{1}{n^{\theta_n(x)}})}(y)d\mu(x)d\mu_A(y)=1 \\
&\Leftrightarrow  \int \mu_A\Big(\limsup_{n\in\mathcal{N}(x,s,\delta)}B(T^n(x),\frac{1}{2}\frac{1}{n^{\theta_n(x)}})\Big)d\mu(x)=1 \\
&\Leftrightarrow  \mu_A\Big(\limsup_{n\in\mathcal{N}(x,s,\delta)}B(T^n(x),\frac{1}{2}\frac{1}{n^{\theta_n(x)}})\Big)=1 \text{ for }\mu-\text{almost every }x.
\end{align*}
See the definition of $\mathcal{N}(x,s,\delta)$ using Theorem \ref{zzani}, would yield that $$\dim_H \limsup_{n\to+\infty}\Big(B(T^{n}(x),\frac{1}{n^\delta})\Big)\geq s\approx \frac{1}{\delta}.$$
In order to establish that \eqref{argumentclé} holds, the simplified scheme of the proof is to prove that, for an infinity of integers $k$, for $\mu$-almost every $x$,
\begin{itemize}
\item[(1)] $T^n(x)\in B(y,2^{-k})$ where $n\approx \frac{1}{\mu_A(B(y,2^{-k}))},$ \medskip
\item[(2)] $\mu_A(B(y,\frac{1}{n^{\delta}}))\approx \frac{1}{n^{\delta \overline{\dim}_H \mu}},$ \medskip
\item[(3)] $\mu_A(B(T^n(x),\frac{1}{n^\delta}))\geq \mu_A(B(y,\frac{1}{n^{\delta}}))\geq  \frac{1}{n^{\delta \overline{\dim}_H \mu}},$ \medskip
\item[(4)] for every  $0\leq s<\underline{\dim}_H \mu_A \approx \overline{\dim}_H \mu_A,$ $$\mathcal{H}^{\mu_A,s}_{\infty}\Big(B(T^n(x),\frac{1}{n^\delta })\Big)\geq \mu_A(B(T^n(x),\frac{1}{n^\delta }))^{\frac{s}{\underline{\dim}_H \mu_A}}\geq \frac{1}{n^{s \delta}},$$
\item[(5)] Writing $\theta_n(x)$ the number such that $$\mu_A(B(T^n(x),\frac{1}{n^{\theta_n(x)}}))\approx \frac{1}{n},$$
one has $$B(y,2^{-k})\subset B(T^n(x),\frac{1}{2n^{\theta_n(x)}}).$$
\end{itemize}
Then taking $s\approx \frac{1}{\delta}$ will allow to conclude the argument.

\subsection{Proof of Theorem \ref{MainthmNonexabis}}

The next lemma determines our choice of the Borel set $A.$

\begin{lemme}
\label{Lemmarestri}
Fix  $\delta>\frac{1}{\overline{\dim}_H \mu}$ and let $\varepsilon>0$ be small enough so that $$\delta \overline{\dim}_H \mu \geq 1+3\varepsilon.$$ Then there exists a Borel set $A$ with $\mu(A)>0$ such that:
\begin{itemize}
\item[(1)] $1\leq \frac{\overline{\dim}_H \mu_A}{\underline{\dim}_H \mu_A}\leq 1+\varepsilon,$ \medskip\medskip
\item[(2)] there exists $r_0>0$ such that for every $x\in \supp(\mu_A),$ for every $r\leq r_0,$ one has $$ \mu_A\Big(B(x,r)\Big)\leq r^{\underline{\dim}_H \mu -\varepsilon},$$
\item[(3)] for every ball $B$, one has $\mu_A\Big(\partial B\Big)=0.$
\end{itemize}
\end{lemme}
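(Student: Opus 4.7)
The plan is to build $A$ by three successive refinements $A\subset K\subset E_\eta\subset\supp(\mu)$, each delivering one of the three required properties.

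First, for property (1), I exploit that $\overline{\dim}_H\mu$ is an essential supremum of lower local dimensions. For any small $\eta>0$ the set
\[
E_\eta:=\bigl\{x\in\supp(\mu):\underline{\dim}_{\mathrm{loc}}(\mu,x)\in[\overline{\dim}_H\mu-\eta,\overline{\dim}_H\mu]\bigr\}
\]
has positive $\mu$-measure, and Besicovitch density (Lemma \ref{densibesi}) transfers the same pointwise bound to $\mu_{E_\eta}$-a.e.\ $x$. Hence both $\underline{\dim}_H\mu_{E_\eta}$ and $\overline{\dim}_H\mu_{E_\eta}$ lie in $[\overline{\dim}_H\mu-\eta,\overline{\dim}_H\mu]$; since the standing hypothesis $\delta\overline{\dim}_H\mu\geq 1+3\varepsilon$ forces $\overline{\dim}_H\mu>0$, choosing $\eta$ small in terms of $\varepsilon$ gives the ratio bound in (1).

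Next, for property (2), I apply an Egorov-type uniformization. The sets
\[
E_\eta^{(n)}:=\bigl\{x\in E_\eta:\forall r\leq 1/n,\ \mu_{E_\eta}(B(x,r))\leq r^{\underline{\dim}_H\mu-\varepsilon/2}\bigr\}
\]
exhaust $E_\eta$ up to a $\mu$-null set, so some $E_\eta^{(n_0)}$ has positive measure; inner regularity lets me replace it by a compact subset $K\subset E_\eta^{(n_0)}$ of positive measure. The uniform bound then persists on $\supp(\mu|_K)\subset K$ and transfers from $\mu_{E_\eta}$ to $\mu_A$ at the cost of a multiplicative constant $\mu(E_\eta)/\mu(A)$, absorbed by the $\varepsilon/2\to\varepsilon$ slack after shrinking $r_0$. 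For property (3), I observe that in the supremum norm $\partial B(x,r)\subset\bigcup_{i=1}^d\{y:y_i=x_i\pm r\}$, so it suffices to arrange that $\mu_A$ charges no coordinate hyperplane. Each set $C_i:=\{c\in\mathbb{R}:\mu(\{y:y_i=c\})>0\}$ is countable, so $H:=\bigcup_i\bigcup_{c\in C_i}\{y:y_i=c\}$ is a countable union of affine hyperplanes, and I take $A:=K\setminus H$.

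The main obstacle is verifying $\mu(A)>0$, i.e., that $\mu|_K$ does not concentrate on $H$. When $\overline{\dim}_H\mu>d-1$ this is automatic, since concentration on any coordinate hyperplane would force $\mu$-a.e.\ point of $K$ to have lower local dimension at most $d-1$, contradicting the bound inherited from (1). In the remaining regime $\overline{\dim}_H\mu\leq d-1$ one needs an additional non-degeneracy of $\mu$ (for example, that $\mu$ is not carried by a countable union of coordinate hyperplanes), which I expect to follow from the $\phi$-mixing hypothesis of Theorem \ref{MainthmNonexabis} against dyadic cubes $\mathcal{D}_n(x)$ combined with the positivity of $\overline{\dim}_H\mu$. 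Once $\mu(A)>0$ is secured, the three properties hold simultaneously on $A$.
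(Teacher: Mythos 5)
Your handling of items (1) and (2) follows the paper's own route: restrict to a set where the lower local dimension lies in a small window below $\overline{\dim}_H\mu$, use Besicovitch density (Lemma \ref{densibesi}) to transfer local dimensions to the restricted measure, and then uniformize the scaling bound on a compact subset of positive measure. Up to that point your argument is sound and essentially identical to the paper's construction of $A_1$ and $A_2$.

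The genuine gap is in item (3), and it is exactly the point you flag yourself. Your strategy is to excise the charged coordinate hyperplanes, setting $A=K\setminus H$, and you hope that in the regime $\overline{\dim}_H\mu\leq d-1$ the mixing hypothesis plus $\overline{\dim}_H\mu>0$ rules out $\mu$ being carried by such hyperplanes. That hoped-for non-degeneracy is false. Take on $\mathbb{T}^2$ the map $T(x,y)=(2x,2y)$ and $\mu=\mathcal{L}^1\times\delta_0$, carried by the circle $\mathbb{T}^1\times\{0\}$: this system is ergodic, exponentially mixing in the sense required (the trace of any ball or dyadic cube on the circle is an interval, and the doubling map with Lebesgue measure mixes BV against $L^1$), and $\overline{\dim}_H\mu=1>0$, yet $\mu$ gives full mass to a coordinate hyperplane. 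For any positive-measure $K$, your set $K\setminus H$ is then $\mu$-null, so the construction collapses; note this sits precisely in your unresolved regime $\overline{\dim}_H\mu\leq d-1$. (Your observation that the case $\overline{\dim}_H\mu>d-1$ is automatic is correct, but it does not cover the situations the theorem is meant to handle, e.g. measures on Bedford--McMullen carpets in the plane.)

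The paper's proof goes the opposite way: when a ball boundary is charged, it does not delete the offending hyperplane but restricts the measure \emph{into} it and recurses. Since sup-norm ball boundaries are finite unions of pieces of coordinate-parallel hyperplanes, a charged boundary yields a charged coordinate affine subspace of one dimension less; passing to the restricted measure preserves properties (1)--(2) by the same Besicovitch density argument, and the descent can repeat at most $d-1$ times. If it never terminates with all ball boundaries null, one reaches a one-dimensional coordinate line on which a charged ball boundary forces an atom of $\mu$; by ergodicity an atom makes $\mu$ uniform on a finite orbit, contradicting $\overline{\dim}_H\mu>0$. This ``restrict into the charged subspace'' step is the missing idea: in the degenerate example above the admissible set $A$ must itself lie inside a coordinate hyperplane, which an excision argument can never produce.
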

\begin{proof}
We now show that items $(1),(2)$ and $(3)$ hold simultaneously  on a set of positive measure.
Fix  $\varepsilon=(d+3)\varepsilon'.$  Let $$A_1=\left\{x: \ \overline{\dim}_H \mu (1-\varepsilon')\leq\underline{\dim}(\mu,x)\leq \overline{\dim}_H \mu\right\}.$$
Then, one has $\mu(A_1)>0.$ In addition, by Besicovitch density theorem (Lemma \ref{densibesi}), for $\mu_{A_1}$ almost every $x$, one has $$\underline{\dim}(\mu,x)=\underline{\dim}(\mu_{A_1},x).$$
So $\mu_{A_1}$ fulfills the first item with $\varepsilon'$. In addition, there exists $\rho_0>0$ and a compact set  $$A_2\subset \left\{x\in A_1 : \  \forall r\leq \rho_0, \ \mu_{A_1}\Big(B(x,r)\Big)\leq r^{\underline{\dim}_H \mu_{A_1}-\varepsilon'}\right\}$$
satisfies $\mu(A_2)\geq \frac{\mu(A_1)}{2}.$ The same argument as below shows that $\mu_{A_2}$ satisfies $$\underline{\dim}_H \mu_{A_1}\leq \underline{\dim}_H \mu_{A_2}\leq \overline{\dim}_H \mu_{A_2}\leq \overline{\dim}_H \mu_{A_1}.$$
Hence, taking $r_0 \leq \rho_0$ small enough so that for every $r\leq r_0,$  
$$2\leq r^{-\varepsilon'},$$
for every $x\in \supp(\mu_{A_2})$ and every $r\leq r_0$ one has $$\mu_{A_2}(B(x,r))\leq \frac{\mu_{A_1}(B(x,r))}{\frac{1}{2}}\leq r^{\underline{\dim}_H \mu_{A_1}-2\varepsilon'}\leq r^{\underline{\dim}_H \mu_{A_2}-3\varepsilon'}.$$
So, $\mu_{A_2}$ fulfills all the items with $3\varepsilon'$ instead of $\varepsilon.$

To deal with the last item,  we choose a finite number $(H_i)_{1\leq i\leq d}$ of affine subspace in the following fashion:
\begin{itemize}
\item[•] \textbf{step 1:} Set $H_d =\mathbb{R}^d.$ If there exists a ball $B \subset \mathbb{R}^d$ such that $\mu_{A_2}\Big(\partial B\Big)>0,$ then there exists a $d-1$ affine subspace  $H$ such that $\mu_{A_2}(H)>0$ (recall that we endowed $\mathbb{R}^d$ with $\vert \vert \ \cdot \ \vert\vert_{\infty}$). In this case, we set $H_{d-1}=H.$ 

If for every ball, $\mu_{A_2}\Big(\partial B\Big)=0,$ then we set for every $1\leq i\leq d,$ $H_i=H_d.$\medskip
\item[•] \textbf{step $2\leq i \leq d-1$:} repeat step $1$ with $\mu_{A_2 \cap H_i}.$\medskip
\end{itemize}
Two scenarios can occur. Either for every ball, one has $$ \mu_{A_2 \cap H_1}\Big(\partial B\Big)=0,$$
in this case we set $A_3=A_2 \cap H_1$. Or $H_1$ is a $1$ dimensional subspace and there exists a ball $B$ such that $$ \mu_{A_2 \cap H_1}\Big(\partial B\Big)>0.$$
But the latest inequality implies that $\mu$ has an atom and hence is purely atomic. This cannot hold since $\overline{\dim}_H \mu>0.$ So the first scenario occurs. In addition, the same argument as before shows that $\mu_{A_2\cap H_1}$ satisfies all the items with $(3+d) \varepsilon'=\varepsilon,$ which concludes the proof.
\end{proof}

\textbf{For the rest of the section, we fix $\delta>\frac{1}{\overline{\dim}_H \mu}$ and $A$ for which $(1)-(3)$ holds.}

Recall that, for $n\geq 0,$
\begin{equation}
\mathcal{D}_{n}=\left\{2^{-n}(k_1,...,k_d)+2^{-n}[0,1)^d, \ (k_1,...,k_d)\in\mathbb{Z}^d\right\}\text{ and }\mathcal{D}=\bigcup_{n\geq 0}\mathcal{D}_{n}.
\end{equation}
By Lemma \ref{Lemmarestri}, for every $(k_1,...,k_d)\in\mathbb{Z}^d,$ one has 
\begin{equation}
\label{mes0boundDya}
\mu_A\Big(\partial(2^{-n}(k_1,...,k_d)+2^{-n}[0,1)^d)\Big)=\mu_A\Big(\partial B(2^{-n}(k_1,...,k_d),2^{-n})\Big)=0.
\end{equation}
We introduce additional notations:
\begin{itemize}

\item[•] for every $n\in\mathbb{N},$  $D_n (x)$ is the dyadic cube of generation $n$ containing $x$, \medskip
\item[•] for every $n\in\mathbb{N},$ one denotes $$\widetilde{\mathcal{D}}_n=\left\{B(2^{-n}(k_1,...,k_d),2^{-n}), \ (k_1,...,k_d)\in\mathbb{Z}^d\right\}\text{ and }\widetilde{\mathcal{D}}=\bigcup_{n\geq 0} \widetilde{\mathcal{D}}_n.$$
\end{itemize}
A direct check shows that the two following facts hold true:
\begin{itemize}
\item[•] for every $\varepsilon>0$, one has $$\sum_{\widetilde{D}\in\widetilde{\mathcal{D}}}\vert \widetilde{D}\vert^{\varepsilon}\mu_A(\widetilde{D})<+\infty ,$$
\item[•] for $\mu_A$-almost every $x$, for every $n\in\mathbb{N},$ there exists a unique $\widetilde{D}\in\widetilde{\mathcal{D}}_n$ such that $x\in \frac{1}{2}\widetilde{D}.$ One will denote $\widetilde{D}_n(x)=\widetilde{D}.$
\end{itemize}
Given $\rho>0,$ let us write
\begin{equation}
S_{\rho}=\left\{y\in E : \ \rho(y)\geq \rho\right\}.
\end{equation}
Since $\mu_A(E)=1,$ one has $$\mu_A\Big(\bigcup_{\rho>0}S_\rho\Big)=1, $$
where $\rho(\cdot)$ is given as in item $(3)$ of Lemma \ref{Lemmarestri}.  In addition, defining 
\begin{equation}
\label{EquaDensiDyad}
\widetilde{S}_{\rho}=\left\{y\in E \cap A : \ \rho(y)\geq \rho\text{ and }\forall r\leq \rho, \forall n\geq \frac{-\log \rho}{\log 2} \begin{cases} \frac{\mu_A\Big(B(y,r)\cap S_{\rho}\Big)}{\mu_A\Big(B(y,r)\Big)}\geq \frac{1}{2} \\
\frac{ \mu\Big(D_n(y)\cap A\Big)}{\mu\Big(D_n(y)\Big)}\geq \frac{1}{2}\end{cases}\right\},
\end{equation}
the argument used in the proof of Lemma \ref{doublebesi} yields that $$\mu_A\Big(\bigcup_{\rho>1}\widetilde{S}_\rho\Big)=1.$$

Given $k\in\mathbb{N}$ and $\widetilde{D}\in\widetilde{\mathcal{D}}_k,$ define

\begin{equation}
\label{Defngamma}
\gamma_{\widetilde{D},\varepsilon,\delta}=\frac{\delta \log \mu_A(\widetilde{D})}{\log \vert \widetilde{D}\vert}(1+\varepsilon)\text{ and }n_{\gamma_{\widetilde{D}}}=\lfloor k\gamma_{\widetilde{D},\varepsilon,\delta}\rfloor+1.
\end{equation}


\begin{lemme}
\label{LemmaEdelta}
For  $\rho>0,$, $y\in \widetilde{S}_\rho,$  and $y'\in \widetilde{D}_k(y'),$ write $$G_{y',k,\rho,\delta,\varepsilon}=\left\{D\in\mathcal{D}_{n_{\gamma_{\widetilde{D}_k(y')}}}: \ D\cap \frac{1}{2}\widetilde{D}_k(y')\cap S_\rho \neq \emptyset,\mu_A(D)\geq \mu_A(D_{n_{\gamma_{\widetilde{D}_k(y')}}}(y'))\right\}$$ and
\begin{equation}
E_{\widetilde{D}_k(y),\delta,\varepsilon,\rho}=\left\{y'\in S_{\rho}\cap \frac{1}{2}\widetilde{D}_k(y) : \mu_A\Big(\bigcup_{D\in G_{y',k,\rho,\delta,\varepsilon}}D\Big)\leq \vert \widetilde{D}_k(y)\vert^{\varepsilon}\mu_A\Big(\widetilde{D}_k(y)\Big)\right\}.
\end{equation}
Then, for $\mu_A$-almost every $y\in \widetilde{S}_\rho,$ there exists $N_y \in\mathbb{N}$ such that for every $k\geq N_y,$  $$y\notin E_{\widetilde{D}_k(y),\delta,\varepsilon,\rho}.$$
\end{lemme}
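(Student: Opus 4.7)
\textbf{Proof plan for Lemma \ref{LemmaEdelta}.} The strategy will be a Borel--Cantelli argument applied to the ``bad'' events
\[
F_k := \{y \in \widetilde{S}_\rho : y \in E_{\widetilde{D}_k(y), \delta, \varepsilon, \rho}\}, \qquad k \in \mathbb{N}.
\]
It will suffice to prove $\sum_{k \geq 1} \mu_A(F_k) < +\infty$: indeed, the classical Borel--Cantelli lemma then guarantees that $\mu_A$-almost every $y \in \widetilde{S}_\rho$ lies in $F_k$ for only finitely many $k$, which is exactly the conclusion of the lemma.

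Since for $\mu_A$-almost every $y$ the cube $\widetilde{D}_k(y) \in \widetilde{\mathcal{D}}_k$ is uniquely defined (a consequence of \eqref{mes0boundDya}) and $y \in F_k$ forces $y \in E_{\widetilde{D}_k(y), \delta, \varepsilon, \rho} \subset \tfrac{1}{2}\widetilde{D}_k(y)$, we first decompose
\[
\mu_A(F_k) \leq \sum_{\widetilde{D} \in \widetilde{\mathcal{D}}_k} \mu_A\bigl(E_{\widetilde{D}, \delta, \varepsilon, \rho}\bigr).
\]
The core of the argument will then be the uniform pointwise estimate
\begin{equation*}
\mu_A\bigl(E_{\widetilde{D}, \delta, \varepsilon, \rho}\bigr) \leq |\widetilde{D}|^{\varepsilon}\, \mu_A(\widetilde{D}) \qquad (\star)
\end{equation*}
valid for every $\widetilde{D} \in \widetilde{\mathcal{D}}_k$.

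To establish $(\star)$ I would use a simple sorting argument. The key observation is that the level $n_{\gamma_{\widetilde{D}}}$ defined in \eqref{Defngamma} depends only on the container $\widetilde{D}$, not on the point $y'$. Enumerate the finite family
\[
\mathcal{F}_{\widetilde{D}} := \bigl\{D \in \mathcal{D}_{n_{\gamma_{\widetilde{D}}}} : D \cap \tfrac{1}{2}\widetilde{D} \cap S_\rho \neq \emptyset\bigr\}
\]
as $D_{(1)}, D_{(2)}, \ldots$ in non-increasing order of $\mu_A$-measure. If $y' \in D_{(i)}$, then by the very definition of $G_{y', k, \rho, \delta, \varepsilon}$ this set contains at least $\{D_{(1)}, \ldots, D_{(i)}\}$, so the condition $y' \in E_{\widetilde{D}, \delta, \varepsilon, \rho}$ forces
\[
\sum_{j=1}^{i} \mu_A(D_{(j)}) \leq |\widetilde{D}|^{\varepsilon}\, \mu_A(\widetilde{D}).
\]
Letting $i^{\star}$ be the largest index satisfying this inequality (with $i^\star = 0$ if none does), we obtain $E_{\widetilde{D}, \delta, \varepsilon, \rho} \subset \bigcup_{j \leq i^\star} D_{(j)}$, and since the $D_{(j)}$'s are pairwise disjoint dyadic cubes we conclude $\mu_A(E_{\widetilde{D}, \delta, \varepsilon, \rho}) \leq \sum_{j=1}^{i^\star} \mu_A(D_{(j)}) \leq |\widetilde{D}|^{\varepsilon}\, \mu_A(\widetilde{D})$, which is $(\star)$.

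Summing $(\star)$ over $\widetilde{D} \in \widetilde{\mathcal{D}}_k$ and then over $k$, and invoking the ``direct check'' $\sum_{\widetilde{D} \in \widetilde{\mathcal{D}}} |\widetilde{D}|^{\varepsilon} \mu_A(\widetilde{D}) < +\infty$ recorded right after the definition of $\widetilde{\mathcal{D}}$, yields $\sum_k \mu_A(F_k) \leq \sum_{\widetilde{D} \in \widetilde{\mathcal{D}}} |\widetilde{D}|^{\varepsilon} \mu_A(\widetilde{D}) < +\infty$, and Borel--Cantelli concludes. The main (and essentially only) delicate point will be the sorting step leading to $(\star)$: one has to track which cubes $D_{(j)}$ are forced to enter $G_{y', k, \rho, \delta, \varepsilon}$ as $y'$ moves across the enumeration, and to notice that the failure set $E_{\widetilde{D}, \delta, \varepsilon, \rho}$ is automatically covered by an initial segment of the sorted family whose total measure is pinned down by the very inequality defining $E_{\widetilde{D}, \delta, \varepsilon, \rho}$.
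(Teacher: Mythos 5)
Your proposal is correct and follows essentially the same route as the paper: the pointwise bound $\mu_A(E_{\widetilde{D},\delta,\varepsilon,\rho})\leq \vert\widetilde{D}\vert^{\varepsilon}\mu_A(\widetilde{D})$ (which the paper declares ``direct to check'' and you justify via the sorting/initial-segment argument, using that $n_{\gamma_{\widetilde{D}}}$ depends only on $\widetilde{D}$ and that same-generation dyadic cubes are disjoint), followed by the recorded summability of $\sum_{\widetilde{D}\in\widetilde{\mathcal{D}}}\vert\widetilde{D}\vert^{\varepsilon}\mu_A(\widetilde{D})$ and Borel--Cantelli. No gaps; your write-up simply makes explicit the step the paper leaves to the reader.
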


\begin{proof}
With the notations of Lemma \ref{LemmaEdelta}, it is direct to check that $$\mu_A(E_{\widetilde{D}_k(y),\delta,\varepsilon,\rho})\leq \vert \widetilde{D}_k(y)\vert^{\varepsilon}\mu_A\Big(\widetilde{D}_k(y)\Big).$$
Recall that $$\sum_{\widetilde{D}\in \widetilde{\mathcal{D}}}\vert \widetilde{D}\vert^{\varepsilon}\mu_A\Big(\widetilde{D}\Big)<+\infty.$$
The conclusion follows from Borel Cantelli's Lemma.

\end{proof}

\begin{remark}
\label{remarkrhoy}
Notice that Lemma \ref{LemmaEdelta} implies that for every $\rho>0,$ for $\mu$-almost every $y\in \widetilde{S}_\rho,$ there exists $N_y\in\mathbb{N}$ such that, for every   $k\geq N_y,$ one has $$ \mu_A\Big(\bigcup_{D\in G_{y,k,\rho,\delta,\varepsilon}}D\Big)\geq \vert \widetilde{D}_k(y)\vert^{\varepsilon}\mu_A\Big(\widetilde{D}_k(y)\Big).$$
For any such $y$ and $\rho>0,$ we set 
\begin{align}
\label{DefFdelta}
F_{y,k,\rho,\delta,\varepsilon}=\bigcup_{D\in G_{y,k,\rho,\delta,\varepsilon}}D
\end{align}
and for $r>0,$ $ F_{y,r,\rho,\delta,\varepsilon}= F_{y,\lfloor \frac{\log r}{-\log 2} \rfloor+1,\rho,\delta,\varepsilon}.$
\end{remark}

In order to establish Theorem \ref{mainthm}, one needs to establish a refinement of Proposition \ref{LocdimMix}. We point out that it is to establish the following lemma that one needs $(T,\mu)$ to be super-polynomially mixing. First, for $\rho>0$ and $y\in S_\rho$ for which Remark \ref{remarkrhoy} applies, let us denote, for $r>0$ and $x\in \mathbb{R}^d,$ $$\widetilde{\tau}_r(x,y)=\left\{n\geq 1 : \ T^n(x)\in F_{y,r,\rho,\delta,\varepsilon} \right\}.$$

\begin{lemme}
\label{ModiLocMix}
For every $\rho>0$ and every $y\in\widetilde{ S}_\rho$ such that $\overline{\dim}(\mu_A,y)\leq d$ to which Remark \ref{remarkrhoy} applies, for $\mu$-almost every $x$, one has $$\lim_{r\to 0 } \frac{\log \widetilde{\tau}_r(x,y)}{-\log \mu_A(F_{y,r,\rho,\delta,\varepsilon})}=1.$$
\end{lemme}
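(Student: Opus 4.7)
The plan is to adapt the two-step proof of Proposition \ref{LocdimMix}, treating the set $F_k := F_{y,2^{-k},\rho,\delta,\varepsilon}$ in place of the single ball $B(y,r)$. By monotonicity, it suffices to argue along dyadic scales $r = 2^{-k}$. A crucial preliminary, following from Remark \ref{remarkrhoy} together with the polynomial upper bound $\mu_A(\widetilde{D}_k(y)) \leq 2^{-k(\underline{\dim}_H\mu - \varepsilon)}$ supplied by Lemma \ref{Lemmarestri}(2), is the two-sided estimate
\begin{equation*}
2^{-k\varepsilon}\mu_A(\widetilde{D}_k(y)) \leq \mu_A(F_k) \leq \mu_A(\widetilde{D}_k(y)),
\end{equation*}
which guarantees that the ratio $-\log\mu_A(F_k) / -\log\mu_A(\widetilde{D}_k(y))$ tends to $1$ as $k \to \infty$ (up to $\varepsilon$-terms). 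Hence in the denominator of the statement one may freely replace $\mu_A(F_k)$ by $\mu_A(\widetilde{D}_k(y))$ up to errors of order $\varepsilon$.

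For the lower bound $\liminf \geq 1 - O(\varepsilon)$, the inclusion $F_k \subset \widetilde{D}_k(y) \subset B(y, 2\cdot 2^{-k})$ yields $\widetilde{\tau}_{2^{-k}}(x,y) \geq \tau_{2\cdot 2^{-k}}(x,y)$. The minimum-distance estimate from \cite{Galadim}, already invoked in the proof of Proposition \ref{LocdimMix}, then gives, for every $\gamma > 1/\underline{\dim}(\mu,y)$ and $\mu$-a.e.\ $x$, that $\min_{1\leq i \leq n}\|T^i(x) - y\|_\infty > n^{-\gamma}$ eventually, whence $\widetilde{\tau}_{2^{-k}}(x,y) \gtrsim 2^{k/\gamma}$. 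Since $\underline{\dim}(\mu,y) \geq \underline{\dim}_H\mu$ for $\mu$-a.e.\ $y$, letting $\gamma$ decrease to $1/\underline{\dim}(\mu,y)$ and comparing with the polynomial upper bound on $\mu_A(F_k)$ produces the required inequality.

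For the upper bound $\limsup \leq 1 + O(\varepsilon)$, I would set $N_k = \lfloor\mu(F_k)^{-(1+\varepsilon)}\rfloor + 1$ and $E_{N_k} = \bigcup_{1 \leq i \leq N_k} T^{-i}(F_k)$, and apply the Chung--Erd\H{o}s inequality to the events $T^{-i}(F_k)$. The correlation bound is obtained by decomposing $F_k$ into its finitely many constituent dyadic cubes at generation $n_{\gamma_{\widetilde{D}_k(y)}}$: each such cube belongs to the mixing class $\mathcal{C}_1$ once $k$ is large, because it meets $S_\rho$ and so contains a point $x \in E$ with $\rho(x) \geq \rho$ bounding the cube's diameter from above. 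Applying $\phi$-mixing cube-by-cube and summing gives
\begin{equation*}
\mu(F_k \cap T^{-(j-i)}(F_k)) \leq \kappa\,\mu(F_k)^2 + \phi(j-i)\mu(F_k).
\end{equation*}
Since $\phi$ is super-polynomial, $\sum_n \phi(n) < \infty$; plugging the estimate into Chung--Erd\H{o}s yields a first bound $\mu(E_{N_k}) \gtrsim 1/\kappa$. The super-polynomial decay of $\phi$ then permits a block-decomposition argument in which $[1, M_k]$ is split into consecutive windows of length $N_k$ whose indicator events decouple up to a negligible error, boosting the tail to $\mu(E_{M_k}^c) \leq \mu(F_k)^\theta$ for some $\theta > 0$ and $M_k = N_k \cdot (\log k)^{C}$. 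Borel--Cantelli then gives $\widetilde{\tau}_{2^{-k}}(x,y) \leq M_k$ for $\mu$-a.e.\ $x$ and $k$ large, and the conversion $\mu(F_k) \geq \mu(A)\mu_A(F_k)$ translates this bound into one on $\mu_A(F_k)$.

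The principal technical obstacle is this upper bound: with an arbitrary multiplicative constant $\kappa$ in the $\phi$-mixing hypothesis, the Chung--Erd\H{o}s inequality alone only yields $\mu(E_{N_k}) \gtrsim 1/\kappa$, whereas Borel--Cantelli demands a tail summable in $k$. Bridging this gap is precisely where the super-polynomial rate (rather than merely $\Sigma$-mixing) is essential, since it is the super-polynomial decay of $\phi$ that allows distant blocks of the orbit to be decoupled with an error small enough to survive the summation over $k$. A subsidiary, bookkeeping-level difficulty is ensuring each dyadic cube in $F_k$ lies in $\mathcal{C}_1$, which is precisely what the conditions $y \in \widetilde{S}_\rho$ and intersection with $S_\rho$ in the definition of $G_{y,k,\rho,\delta,\varepsilon}$ are designed to provide.
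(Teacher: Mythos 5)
Your overall scheme (dyadic scales, lower bound via the inclusion of $F_k$ in a ball around $y$ plus Galatolo's minimum-distance estimate, upper bound via Chung--Erd\H{o}s applied to $A_i=T^{-i}(F_k)$ with cube-by-cube mixing, then Borel--Cantelli) is the paper's scheme, but your key correlation estimate is not what the mixing hypothesis gives, and this hides the one genuinely new point of the paper's proof. Applying \eqref{equamix} to each of the $M_k:=\#G_{y,k,\rho,\delta,\varepsilon}$ constituent cubes $D\in\mathcal{C}_1$ against the Borel set $B=F_k$ produces the error term $\phi(n)\mu(F_k)$ \emph{once per cube}, so after summing one only gets $\mu\bigl(F_k\cap T^{-n}(F_k)\bigr)\leq \kappa\,\mu(F_k)^2+M_k\,\phi(n)\,\mu(F_k)$, not $\kappa\,\mu(F_k)^2+\phi(n)\,\mu(F_k)$. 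Since $M_k$ is of order $2^{(n_{\gamma_{\widetilde{D}_k(y)}}-k)d}$, i.e.\ polynomially large in $2^{k}$, mere summability of $\phi$ no longer controls $\sum_{i<j\leq N_k}M_k\phi(j-i)\mu(F_k)$ against the main term $N_k^2\mu(F_k)^2$. The paper handles this by splitting the double sum into the near-diagonal band $1\leq|i-j|\leq N_k^{\varepsilon'}$, bounded trivially by $N_k^{1+\varepsilon'}\mu(F_k)$, and the far band, where the super-polynomial decay of $\phi$ together with the polynomial size of $M_k$ and $N_k$ gives $M_k\sum_{n\geq N_k^{\varepsilon'}}\phi(n)\leq C$ uniformly in $k$. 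So super-polynomial mixing is needed exactly to beat the factor $M_k$ and the band length, not, as you assert, to decouple distant blocks; your block-decomposition step is absent from the paper and cannot be run with \eqref{equamix} anyway, since that inequality is one-sided and only for sets in $\mathcal{C}_1\times\mathcal{B}(\mathbb{R}^d)$, so it gives no control on correlations of the \emph{complements} of the hitting events.

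Two smaller points. For the lower bound, "comparing with the polynomial upper bound on $\mu_A(F_k)$" goes the wrong way: an upper bound on $\mu_A(F_k)$ enlarges the denominator $-\log\mu_A(F_k)$ and cannot yield $\liminf\geq 1$; what the argument of Proposition \ref{LocdimMix} combines with Galatolo's estimate is a lower bound on the measure (this is where the hypothesis $\overline{\dim}(\mu_A,y)\leq d$ and the inclusion $F_k\subset B(y,2^{1-k})$ are used). Finally, the paper does not replace $\mu_A(F_k)$ by $\mu_A(\widetilde{D}_k(y))$: it keeps $F_k$ and only switches from $\mu_A$ to $\mu$, which by \eqref{EquaDensiDyad} costs an additive constant in the logarithm and so does not affect the limit, whereas your replacement loses a factor $2^{-k\varepsilon}$ with the \emph{fixed} $\varepsilon$ of the construction and therefore only proves $1\pm O(\varepsilon)$, which is weaker than the stated limit $=1$.
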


\begin{proof}
Notice that, \eqref{EquaDensiDyad}, for every $D\in G_{y,k,\rho,\delta,\varepsilon},$ one has $$\mu_A(D)\geq \frac{\mu(D)}{2 \mu(A)},$$
so that $$\frac{\mu\Big(F_{y,r,\rho,\delta,\varepsilon}\Big)}{2\mu(A)} \leq \mu_A\Big(F_{y,r,\rho,\delta,\varepsilon}\Big)\leq \frac{\mu\Big(F_{y,r,\rho,\delta,\varepsilon}\Big)}{\mu(A)}.$$
This yields that it is enough to show that, for $\mu$-almost every $x$, $$\lim_{r\to 0 } \frac{\log \widetilde{\tau}_r(x,y)}{-\log \mu(F_{y,r,\rho,\delta,\varepsilon})}=1.$$

Recall that for every large enough $r,$ $ F_{y,r,\rho,\delta,\varepsilon}\subset B(y,r).$ Hence the same estimates as in the proof of Proposition \ref{LocdimMix} yields that $$ \liminf_{r\to 0 } \frac{\log \widetilde{\tau}_r(x,y)}{-\log \mu(F_{y,r,\rho,\delta,\varepsilon})}\geq 1.$$
We now establishes that $$ \limsup_{r\to 0 } \frac{\log \widetilde{\tau}_r(x,y)}{-\log \mu(F_{y,r,\rho,\delta,\varepsilon})}\leq 1.$$
Fix $k\geq \max\left\{ N_y, \lfloor \frac{-\log \rho}{\log 2}\rfloor +1 \right\}$ (where $N_y$ is given as in Remark \ref{remarkrhoy}) large enough so that $$\mu(D_n(y))\geq \vert D_n(y)\vert^{2d} $$
for every $n\geq k$. We are going to establish estimates that are similar to the one established in the proof of Proposition \ref{LocdimMix}, only, we need to be more careful along the diagonal $i=j.$ More precisely, we remark first that $$M_{y,2^{-k}}:=\# G_{y,k,\rho,\delta,\varepsilon}$$ 
is polynomial in $r=2^{-k}.$ 
Moreover, by application of Definition \ref{defphimix}, for every $n\in\mathbb{N}$ and every Borel set $\widetilde{A}$, one has $$\mu(T^{-n}(\widetilde{A})\cap F_{y,r,\rho,\delta,\varepsilon})\leq \kappa\mu(\widetilde{A})\mu(F_{y,r,\rho,\delta,\varepsilon})+M_r \phi(n)\mu(\widetilde{A}).$$

Write $$N_r=\lfloor \frac{1}{\mu(F_{y,r,\rho,\delta,\varepsilon})}\rfloor +1$$ Let us fix $\varepsilon'>0$ small enough so that, for every $r$ small enough,  $$\frac{N_r^{1+\varepsilon'}}{N_r^2 \mu(F_{y,r,\rho,\delta,\varepsilon})}\leq r^{\varepsilon'} .$$
Hence, writing for $1\leq i\leq N_r,$ $$A_i=T^{-i}(F_{y,r,\rho,\delta,\varepsilon}),$$
one has
\begin{align}
\label{EquaLipsch}
&\sum_{1\leq i,j\leq N_r} \mu\Big(A_i \cap A_j\Big)= \sum_{1\leq \vert i-j \vert \leq N_r^{\varepsilon'}}\mu(A_i \cap A_j)+\sum_{ \vert i-j \vert > N_r^{\varepsilon'}}\mu(A_i \cap A_j) \nonumber\\
&\leq N_r^{1+\varepsilon'}\mu(F_{y,r,\rho,\delta,\varepsilon})+N_r(N_r+1 )\mu^2(F_{y,r,\rho,\delta,\varepsilon})+\kappa N_r \mu(F_{y,r,\rho,\delta,\varepsilon}) M_r \sum_{N_r^{\varepsilon'}\leq k \leq N_r}\phi(k).
\end{align}
Since $\phi$ is super-polynomial and that $N_r, M_r$ are polynomial in $r$, one has $$M_r \sum_{N_r^{\varepsilon'}\leq k \leq N_r}\phi(k)<C,$$
where $C$ is uniform in $r$. We conclude that $$\sum_{1\leq i,j\leq N_r} \mu\Big(A_i \cap A_j\Big)\leq  N_r^{1+\varepsilon'}\mu(F_{y,r,\rho,\delta,\varepsilon})+N_r(N_r+1 )\mu^2(F_{y,r,\rho,\delta,\varepsilon})+\kappa C N_r \mu(F_{y,r,\rho,\delta,\varepsilon}).$$
The proof of proposition is then concluded by using the same arguments as in the proof of Proposition \ref{LocdimMix}.
\end{proof}

%
%
%
%
%
%
%
The following lemma is established in \cite{SaussolBarreira}).
\begin{lemme}[\cite{SaussolBarreira}]
\label{AlmostDoubLemma}
There exists a constant $C>1$ such that, for $\mu_A$-almost every $y$, for every $\varepsilon'>0$ there exists $r_{y}>0$ such that for every $r\leq r_y,$ one has $$\mu_A(B(y,\frac{1}{10}r))\geq r^{\varepsilon'}\mu_A(B(y,r)).$$
\end{lemme}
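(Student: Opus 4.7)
The plan is to establish this almost-doubling property via a Borel--Cantelli argument along dyadic scales, using the Besicovitch covering theorem to control the measure of the ``bad'' set at each scale. For fixed $\varepsilon' > 0$ and $k \in \mathbb{N}$, define
$$E_k^{\varepsilon'} = \left\{ y : \mu_A\bigl(B(y, 2^{-k}/20)\bigr) < 2^{-k\varepsilon'} \mu_A\bigl(B(y, 2^{-k})\bigr) \right\}.$$
The key step is to show that $\mu_A(E_k^{\varepsilon'}) \leq C_d\, \mu_A(\mathbb{R}^d)\, 2^{-k\varepsilon'}$ for a dimension-dependent constant $C_d$. This bound is summable in $k$, so Borel--Cantelli yields $\mu_A(\limsup_k E_k^{\varepsilon'}) = 0$, i.e. for $\mu_A$-a.e.\ $y$ there exists $K_y$ with $y \notin E_k^{\varepsilon'}$ for every $k \geq K_y$.

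For the key estimate, apply the Besicovitch covering theorem to the family $\{B(y, 2^{-k}/20) : y \in E_k^{\varepsilon'}\}$ to extract a subfamily $\{B(y_i, 2^{-k}/20)\}_i$ that still covers $E_k^{\varepsilon'}$ and whose multiplicity is bounded by a dimensional constant $N_d$. Then, since each $y_i \in E_k^{\varepsilon'}$,
$$\mu_A(E_k^{\varepsilon'}) \leq \sum_i \mu_A\bigl(B(y_i, 2^{-k}/20)\bigr) \leq 2^{-k\varepsilon'} \sum_i \mu_A\bigl(B(y_i, 2^{-k})\bigr).$$
To bound the right-hand sum, I would observe that any ball of radius $2^{-k}$ in $(\mathbb{R}^d, \|\cdot\|_\infty)$ can be covered by $O(40^d)$ balls of radius $2^{-k}/20$. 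Combined with the multiplicity bound $N_d$ for the small balls, this forces the ten-times-larger balls $B(y_i, 2^{-k})$ also to have multiplicity bounded by $C_d := N_d \cdot O(40^d)$, whence $\sum_i \mu_A(B(y_i, 2^{-k})) \leq C_d\, \mu_A(\mathbb{R}^d)$.

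Finally, to deduce the statement for arbitrary $r$, set $r_y = 2^{-K_y}$. Given $r \leq r_y$, choose the unique $k \geq K_y$ with $2^{-(k+1)} < r \leq 2^{-k}$. Then $r/10 \geq 2^{-k}/20$, so $B(y, r/10) \supseteq B(y, 2^{-k}/20)$, while $B(y, r) \subseteq B(y, 2^{-k})$. Using that $y \notin E_k^{\varepsilon'}$,
$$\mu_A\bigl(B(y, r/10)\bigr) \geq \mu_A\bigl(B(y, 2^{-k}/20)\bigr) \geq 2^{-k\varepsilon'} \mu_A\bigl(B(y, 2^{-k})\bigr) \geq r^{\varepsilon'} \mu_A\bigl(B(y, r)\bigr),$$
where the last inequality uses $r \leq 2^{-k}$, hence $r^{\varepsilon'} \leq 2^{-k\varepsilon'}$. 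No step is genuinely difficult, but the delicate bookkeeping point is transferring the Besicovitch multiplicity bound from the small balls (on which it is given) to the larger balls of radius $2^{-k}$ appearing in the summation; this is the only place where a dimensional constant gets inflated, and it must be absorbed into $C_d$.
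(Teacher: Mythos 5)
Your argument is correct. Note that the paper does not prove this lemma at all: it is quoted from \cite{SaussolBarreira}, so there is no internal proof to compare with; what you have written is essentially a self-contained reconstruction of the standard argument behind that cited result (Borel--Cantelli along dyadic scales combined with a bounded-overlap covering estimate), and it works. Two small points deserve a word. First, the step where you transfer the Besicovitch multiplicity bound from the balls $B(y_i,2^{-k}/20)$ to the enlarged balls $B(y_i,2^{-k})$ is fine, and can be phrased cleanly as follows: if $w\in B(y_i,2^{-k})$ then $y_i\in B(w,2^{-k})$, and $B(w,2^{-k})$ is covered by a dimensional number of balls of radius $2^{-k}/20$, each of which contains at most $N_d$ of the centers $y_i$ (since $y_i\in B(z,2^{-k}/20)$ forces $z\in B(y_i,2^{-k}/20)$ and the selected small balls have multiplicity $\leq N_d$); hence the enlarged balls have multiplicity $\leq C_d$ and $\sum_i\mu_A(B(y_i,2^{-k}))\leq C_d$, since $\mu_A$ is a probability measure. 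Second, your full-measure set depends on the fixed $\varepsilon'$, whereas the lemma asserts ``for $\mu_A$-a.e.\ $y$, for every $\varepsilon'>0$''; this is repaired by intersecting over a countable sequence $\varepsilon'_m\downarrow 0$ and observing that for $r\leq 1$ the inequality with exponent $\varepsilon'_m$ implies it for any $\varepsilon'\geq\varepsilon'_m$. Both points are routine bookkeeping, not gaps in the idea.
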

We recall that for $\mu_A$-almost every $y$, for every $b\in\mathbb{N},$ writing $D_{b^n}(y)$ the $b$-adic cell of generation $n$ containing $y$, $$\begin{cases}\underline{\dim}(\mu_A,y)=\liminf_{n\to+\infty}\frac{\log \mu_A(D_{b^n}(x))}{-n\log b} \\ \overline{\dim}(\mu_A,y)=\limsup_{n\to+\infty}\frac{\log \mu_A(D_{b^n}(x))}{-n\log b}.\end{cases}$$ 

In consequence, for any $y$ satisfying the above condition (for any $b\in\mathbb{N}$) and such that $\overline{\dim}(\mu_A,y)\leq d,$ fixing $\varepsilon'>0$ and sorting the balls $\left\{B(y,r)\right\}$ into families $$\mathcal{F}_{k,\varepsilon'}=\left\{B(y,r): \ \frac{\log\mu_A(B(y,r))}{\log r}\in [k\varepsilon',(k+1)\varepsilon']\right\},$$
and letting $\varepsilon'\to 0$ one  obtains that 

\begin{equation}
\label{dimlocBoulePeti}
\underline{\dim}(\mu_A,y)=\liminf_{k\to +\infty}\frac{\log \mu_A(D_{n_{\gamma_{\widetilde{D}_k(y)}}}(y))}{\log \vert D_{n_{\gamma_{\widetilde{D}_k(y)}}}(y)\vert}.
\end{equation}

 So let us fix $\rho>0$ and $y\in \widetilde{S}_{\rho}$ such that:
 \begin{itemize}
 \item[•]$\underline{\dim}_H \mu_A \leq\underline{\dim}(\mu_A,y)\leq\overline{\dim}_H(\mu_A)$ and $\overline{\dim}(\mu_A,y)\leq d,$\medskip
 \item[•] Lemma \ref{AlmostDoubLemma} and \eqref{dimlocBoulePeti}  holds for $y$. 
 \end{itemize}
 
Let us fix $\mathcal{K}_y$ an infinite set  of integers so that 
\begin{itemize}
\item[•] for every $k\in\mathcal{K}_y,$ $2^{-k}\leq \min\left\{\rho,r_y\right\}$ (where $r_y$ is as in Lemma \ref{AlmostDoubLemma}),\medskip
\item[•] for every $k\in\mathcal{K}_y,$ one has $$\frac{\log \mu_A(D_{n_{\gamma_{\widetilde{D}_k(y)}}}(y))}{\log \vert D_{n_{\gamma_{\widetilde{D}_k(y)}}}(y)\vert}\leq (1+\varepsilon)\overline{\dim}_H \mu_A \leq (1+\varepsilon)^2 \underline{\dim}_H \mu_A.$$
\end{itemize}

  $k\in\mathbb{N}$ so small that $2^{-k}\leq \min\left\{\rho,r_y\right\}$ (where $r_y$ is as in Lemma \ref{AlmostDoubLemma}) and such that $$\frac{\log \mu_A(D_{n_{\gamma_{\widetilde{D}_k(y)}}}(y))}{\log \vert D_{n_{\gamma_{\widetilde{D}_k(y)}}}(y)\vert}\leq (1+\varepsilon)\overline{\dim}_H \mu_A \leq (1+\varepsilon)^2 \underline{\dim}_H \mu_A.$$ 
By Lemma \ref{ModiLocMix} there exists a Borel set $E$ with $\mu_A(E)=1$, such that, provided that $k$ has been chosen large enough, there exists $n \in [\frac{1}{\mu_A\Big(F_{y,k,\rho,\delta,\varepsilon}\Big)^{1-\varepsilon}},\frac{1}{\mu_A\Big(F_{y,k,\rho,\delta,\varepsilon}\Big)^{1+\varepsilon}}]$ and $T^{n}(x)\in F_{y,k,\rho,\delta,\varepsilon}.$ Recall that $$\mu_A\Big(F_{y,k,\rho,\delta,\varepsilon}\Big)\geq 2^{-k\varepsilon}\mu_A(\widetilde{D}_k(y)),$$ hence, one has $$\Big(\mu_A(\widetilde{D}_k(y))\Big)^{\delta(1-\varepsilon)}\geq \frac{1}{n^{\delta}}\geq \Big(2^{-k\varepsilon}\mu_A(\widetilde{D}_k(y))\Big)^{\delta(1+\varepsilon)}.$$ 
Hence, recalling the definition of $n_{\gamma_{\widetilde{D}_k(y)}},$ one has $$D_{n_{\gamma_{\widetilde{D}_k(y)}}}(T^n(x))\subset B(T^n (x),\frac{1}{n^{\delta}}).$$
Also, recalling that, provided that $k$ has been chosen large enough, one has $$\Big(2^{-k\varepsilon}\mu_A(\widetilde{D}_k(y)\Big)^{1-\frac{\varepsilon}{\underline{\dim}_H \mu_A-\varepsilon}}\geq \mu_A(\widetilde{D}_k(y)) \geq \frac{1}{n^{1-\varepsilon}}.$$

Moreover, since $T^n(x)\in F_{y,k,\rho,\delta,\varepsilon},$ (define by \eqref{DefFdelta}) one has, recalling that $n_{\gamma_{\widetilde{D}_k(y)}}$ is defined by \eqref{Defngamma},   

\begin{align*}
\mu_A\Big(D_{n_{\gamma_{\widetilde{D}_k(y)}}}(T^n(x))\Big)&\geq  \mu_A\Big(D_{n_{\gamma_{\widetilde{D}_k(y)}}}(y)\Big)\geq \Big(\mu_A(\widetilde{D}_k(y))\Big)^{\delta(1+2\varepsilon)(1+\varepsilon)^2\underline{\dim}_H \mu_A} \\
&\geq \Big(\frac{1}{n}\Big)^{\frac{\delta(1+2\varepsilon)(1+\varepsilon)^2\underline{\dim}_H \mu_A}{1-\varepsilon}}.
\end{align*}


In addition, the same computation as in the proof of Lemma \ref{exaesticont} shows that the following result holds true. 

\begin{lemme}
\label{esticontent}
Let $0\leq s<\underline{\dim}_H \mu_A (1-\varepsilon),$ one has $$\mathcal{H}^{\mu_A,s}_{\infty}\Big(B(T^{n}(x),\frac{1}{n^\delta})\Big)\geq \mu_A\Big(B(T^{n}(x),\frac{1}{n^\delta})\Big)^{\frac{s}{(1-\varepsilon)\underline{\dim}_H \mu_A}}\geq \Big(\frac{1}{n}\Big)^{\frac{\delta(1+2\varepsilon)(1+\varepsilon)^2s}{(1-\varepsilon)^2}}.$$
\end{lemme}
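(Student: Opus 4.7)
The plan is to adapt, essentially verbatim, the covering/concavity argument used in the proof of Lemma \ref{exaesticont}, with the simplification that item $(2)$ of Lemma \ref{Lemmarestri} already gives a \emph{uniform} upper bound $\mu_A(B(z,r))\leq r^{\underline{\dim}_H\mu-\varepsilon}$ for every $z\in\supp(\mu_A)$ and every $r\leq r_0$. In particular, we do not need to restrict to a subset $A_p$ as was done for the exact-dimensional case; the density argument there is replaced here by a single application of item $(2)$.

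Fix $s<(1-\varepsilon)\underline{\dim}_H\mu_A$ and set $\theta=s/((1-\varepsilon)\underline{\dim}_H\mu_A)\in(0,1)$. Let $E\subset B(T^n(x),n^{-\delta})$ be a Borel set with $\mu_A(E)=\mu_A(B(T^n(x),n^{-\delta}))$, and let $(C_i)_{i\in\mathbb{N}}$ be an arbitrary covering of $E$ by balls. Taking $n$ large enough so that $n^{-\delta}\leq r_0$ ensures that the only balls contributing $\mu_A$-mass satisfy $|C_i|\leq 2n^{-\delta}\leq r_0$, so by Lemma \ref{Lemmarestri}(2) (together with items $(1)$ and $(3)$, which let us absorb the minor gap between $\underline{\dim}_H\mu$ and $(1-\varepsilon)\underline{\dim}_H\mu_A$ and the factor coming from recentering at a point of $\supp(\mu_A)\cap C_i$) one obtains
\begin{equation*}
\mu_A(C_i)\leq |C_i|^{(1-\varepsilon)\underline{\dim}_H\mu_A}\qquad \text{whenever }C_i\cap\supp(\mu_A)\neq\emptyset.
\end{equation*}

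Applying the concavity of $t\mapsto t^{\theta}$ and restricting to indices $i$ with $C_i\cap\supp(\mu_A)\neq\emptyset$ (which carry the full $\mu_A$-mass of $E$) yields
\begin{align*}
\sum_{i\in\mathbb{N}}|C_i|^s &\geq \sum_{i\,:\,C_i\cap\supp(\mu_A)\neq\emptyset}\mu_A(C_i)^{\theta}\\
&\geq \Big(\sum_{i\,:\,C_i\cap\supp(\mu_A)\neq\emptyset}\mu_A(C_i)\Big)^{\theta}\\
&\geq \mu_A(E)^{\theta}=\mu_A\bigl(B(T^n(x),n^{-\delta})\bigr)^{\theta}.
\end{align*}
Taking the infimum over all admissible coverings gives the first inequality of the statement.

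For the second inequality, I would invoke the lower bound on $\mu_A(D_{n_{\gamma_{\widetilde{D}_k(y)}}}(T^n(x)))$ obtained just above the statement of the lemma, together with the inclusion $D_{n_{\gamma_{\widetilde{D}_k(y)}}}(T^n(x))\subset B(T^n(x),n^{-\delta})$; raising this bound to the power $\theta$ produces the claimed exponent $\delta(1+2\varepsilon)(1+\varepsilon)^2 s/(1-\varepsilon)^2$. The only point requiring any care is the bookkeeping of constants when converting item $(2)$ of Lemma \ref{Lemmarestri} into the clean estimate $\mu_A(C)\leq |C|^{(1-\varepsilon)\underline{\dim}_H\mu_A}$; this is absorbed by the slack $\varepsilon$ in the exponent by choosing $n$ large. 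No genuine obstacle is anticipated beyond this.
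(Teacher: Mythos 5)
Your proposal is correct and follows essentially the same route as the paper, which proves this lemma simply by invoking ``the same computation as in the proof of Lemma \ref{exaesticont}'': the covering/concavity argument, now fed with the uniform Frostman-type bound of item $(2)$ of Lemma \ref{Lemmarestri} on $\supp(\mu_A)$ in place of the density set $A_p$, followed by the lower bound on $\mu_A\big(B(T^n(x),n^{-\delta})\big)$ coming from the display preceding the lemma, raised to the power $\frac{s}{(1-\varepsilon)\underline{\dim}_H\mu_A}$. Two cosmetic caveats: a covering ball of diameter comparable to or larger than $n^{-\delta}$ cannot be excluded as you claim, but it makes the desired inequality trivial since its $s$-power already dominates $\mu_A(B(T^n(x),n^{-\delta}))^{s/((1-\varepsilon)\underline{\dim}_H\mu_A)}$; and the passage from the exponent $\underline{\dim}_H\mu_A-\varepsilon$ of Lemma \ref{Lemmarestri}$(2)$ to $(1-\varepsilon)\underline{\dim}_H\mu_A$ is an $O(\varepsilon)$ relabelling of the slack (harmless as $\varepsilon\to 0$, exactly as in the paper), not something obtained by taking $n$ large.
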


Let $\theta_n(x)$ be the measurable mapping defined by $$\mu_A\Big(B(T^{n}(x),\frac{1}{n^{\theta_n(x)}})\Big)=\Big(\frac{1}{n}\Big)^{(\frac{(1-\frac{3\varepsilon}{\underline{\dim}_H \mu_A-\varepsilon})(1-\frac{\varepsilon}{\underline{\dim}_H \mu_A})}{1+\varepsilon}}.$$
Recall that such a definition is made possible by the assumption that every ball's boundary has $0$ measure. In addition, since $$\frac{1}{n}\geq \Big(\mu_A(\widetilde{D}_k(y))2^{-k\varepsilon}\Big)^{1+\varepsilon},$$
recalling that $\mu_A(\widetilde{D}_k(y))\leq \vert\widetilde{D}_k(y) \vert^{\underline{\dim}_H \mu_A-\varepsilon}$ one has 
\begin{align*}
\Big(\frac{1}{n}\Big)^{\frac{1-\frac{\varepsilon}{\underline{\dim}_H \mu_A}}{1+\varepsilon}}\geq \Big(\mu_A(\widetilde{D}_k(y))\Big)\geq 2^{-k\varepsilon}\mu_A(B(y,2^{-k}))
\end{align*}
So that, using similar estimates, one obtains that
\begin{align*}
\Big(\frac{1}{n}\Big)^{(\frac{(1-\frac{3\varepsilon}{\underline{\dim}_H\mu_A-\varepsilon})(1-\frac{\varepsilon}{\underline{\dim}_H \mu_A})}{1+\varepsilon}}\geq \Big(2^{-k\varepsilon}\mu_A(B(y,2^{-k}))\Big)^{1-\frac{3\varepsilon}{\underline{\dim}_H\mu_A-\varepsilon}}\geq 2^{2k\varepsilon}\mu_A(B(y,2^{-k})).
\end{align*}

Since $T^{n}(x)\in \widetilde{D}_k(y)$, and $$\mu_A\Big(B(T^{n}(x),\frac{1}{n^{\theta_n(x)}})\Big)\geq 2^{2k\varepsilon}\times\mu_A(B(y,2^{-k}))\geq \mu_A(B(y,10 \times 2^{-k})),$$
one must have $$B(y,2^{-k})\subset B(T^{n}(x),\frac{1}{2n^{\theta_n(x)}}).$$
In addition, recalling that  using Lemma \ref{esticontent}, with $$s_{\varepsilon}=\frac{1}{\delta}\times \frac{(\frac{(1-\frac{3\varepsilon}{\underline{\dim}_H \mu_A-\varepsilon})(1-\frac{\varepsilon}{\underline{\dim}_H \mu_A})}{1+\varepsilon}}{\frac{(1+2\varepsilon)(1+\varepsilon)^2}{(1-\varepsilon)^2}},$$
one obtains that $$\mathcal{H}^{\mu_A,s_{\varepsilon}}_{\infty}\Big(B(T^{n}(x),\frac{1}{n^\delta})\Big)\geq \mu_A\Big(B(T^{n}(x),\frac{1}{n^{\theta_n(x)}})\Big). $$

Since the above estimates holds for infinitely many $k$, recalling \eqref{DefThmNxs} one obtains that, for $\mu_A$-almost every $y$, for $\mu$-almost every $x$, $$y\in \limsup_{n\in\mathcal{N}_{x,s_{\varepsilon},\delta}}\Big(B(T^n(x),\frac{1}{2}\frac{1}{n^{\theta_n(x)}})\Big).$$
Hence by \eqref{argumentclé} combined with Theorem \ref{zzani}, for $\mu$-almost every $x,$ one has $$\dim_H \limsup_{n\in\mathbb{N}}B\Big(T^n(x),\frac{1}{n^{\delta}}\Big)\geq s_{\varepsilon}.$$
Letting $\varepsilon\to 0$ along a countable sequence shows that for $\mu$-almost every $x$, one has 
$$\dim_H \limsup_{n\in\mathbb{N}}B\Big(T^n(x),\frac{1}{n^{\delta}}\Big)\geq \frac{1}{\delta}.$$
On the other hand, for every $x\in\mathbb{R}^d,$ for every $\varepsilon>0,$ one has $$ \sum_{n\geq 1} \Big\vert B\Big(T^n(x),\frac{1}{n^{\delta}}\Big)\Big\vert^{(1+\varepsilon)\delta}<+\infty $$
which implies that $$\dim_H \limsup_{n\in\mathbb{N}}B\Big(T^n(x),\frac{1}{n^{\delta}}\Big)\leq \frac{1}{\delta}$$
and Theorem \ref{MainthmNonexabis} is proved.

\section{Some perspective}

\subsection{Exponential mixing with respect to Lipschitz observable}

In view of extending the approximation spectrum given by \eqref{equFanShmelTroub} to the $(\times 2,\times 3 )$, working $\Sigma$-mixing system, as stated in Definition \ref{defphimix}, is very natural. In many other contexts, ergodic systems $(T,\mu)$ can be proved to be exponentially-mixing from the space of Lipschitz mapping against itself. Thus it is very natural to wonder whether the proof-techniques of the present article can be extended to these settings. Below we provide some insight on how we can adapt  the proofs of Theorem \ref{mainthm} and Theorem  \ref{MainthmNonexa} to the case of ergodic systems that are super-polynomially mixing with respect to Lipschitz observable. In what follows, we fix $(r_n )_{n\in\mathbb{N}}$ as in Theorem \ref{mainthm} or $(r_n =\frac{1}{n^{\delta}})_{n\in\mathbb{N}},$ $\delta>\frac{1}{\overline{\dim}_H \mu}$ in the context of Theorem \ref{MainthmNonexa}.

First we recall  the following lemma, which is a reformulation of \cite[Lemma 2.2]{C}.

\begin{lemme}[\cite{C}]
\label{LEmmaKae}
Let $\eta \in\mathcal{M}(\mathbb{R}^d)$, $x\in\supp(\eta)$ such that $\overline{\dim}(x,\eta)<+\infty$ and $\varepsilon>0.$ Then there exists $C_{\varepsilon}>0$, depending on $\varepsilon$ and $\overline{\dim}(x,\eta)$ only, such that $$\limsup_{N\to +\infty}\frac{\#\left\{0\leq k\leq N-1 : \ \mu\Big(B(x,2^{-k})\Big)\leq C_{\varepsilon}\mu\Big(B(x,2^{-k+1})\Big)\right\}}{N} \leq \varepsilon.$$ 

\end{lemme}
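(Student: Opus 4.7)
The plan is to prove this by a simple telescoping argument, turning the density condition into a bound on the sum of logarithmic increments of the ball measures. Set $u_k = -\log_2 \eta(B(x, 2^{-k}))$. Since $B(x, 2^{-k}) \subset B(x, 2^{-k+1})$, the sequence $(u_k)$ is non-decreasing in $k$, so the increments $\Delta_k := u_k - u_{k-1}$ are non-negative. The condition $\eta(B(x, 2^{-k})) \leq C_\varepsilon \eta(B(x, 2^{-k+1}))$ is equivalent to $\Delta_k \geq \log_2(1/C_\varepsilon)$.

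The next step is to control the total mass of $\sum_k \Delta_k$ by using the upper local dimension. By definition of $\overline{\dim}(x, \eta) =: \alpha < +\infty$, for every $\varepsilon' > 0$ there exists $N_0$ such that for $N \geq N_0$ one has $u_N \leq (\alpha + \varepsilon') N$. Telescoping,
\begin{equation*}
\sum_{k=1}^{N} \Delta_k = u_N - u_0 \leq (\alpha + \varepsilon') N.
\end{equation*}

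Hence the number of indices $0 \leq k \leq N-1$ for which $\Delta_{k+1} \geq \log_2(1/C_\varepsilon)$ is bounded by $(\alpha + \varepsilon')N / \log_2(1/C_\varepsilon)$. To make the $\limsup$ of the density at most $\varepsilon$, it suffices to pick $C_\varepsilon$ small enough so that $(\alpha + \varepsilon') / \log_2(1/C_\varepsilon) \leq \varepsilon$; any choice such as $C_\varepsilon = 2^{-2\alpha/\varepsilon}$ will work once $\varepsilon'$ is fixed small relative to $\alpha$. Since $C_\varepsilon$ depends only on $\varepsilon$ and on $\alpha = \overline{\dim}(x, \eta)$, the claim follows.

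There is no real obstacle; the only subtle point is noticing that the condition in the lemma is the \emph{reverse} of the usual doubling condition (it asks when the smaller ball is a very small \emph{fraction} of the bigger one), so that one is really bounding the frequency of large logarithmic jumps and this is controlled by the finiteness of $\overline{\dim}(x, \eta)$.
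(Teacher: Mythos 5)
Your argument is correct: the reduction to the non-negative increments $\Delta_k$ of $u_k=-\log_2\eta(B(x,2^{-k}))$ (finite because $x\in\supp(\eta)$), the bound $\sum_{k\le N}\Delta_k=u_N-u_0\le(\alpha+\varepsilon')N$ from $\overline{\dim}(x,\eta)=\alpha$, and the Markov-type count of indices with $\Delta_k\ge\log_2(1/C_\varepsilon)$ give exactly the stated density bound, with $C_\varepsilon$ depending only on $\varepsilon$ and $\alpha$. The paper itself does not prove this lemma (it is quoted as a reformulation of Lemma 2.2 of \cite{C}), and your telescoping argument is the standard proof of that statement; the only cosmetic caveat is that your illustrative choice $C_\varepsilon=2^{-2\alpha/\varepsilon}$ degenerates when $\alpha=0$, but your general prescription ``take $C_\varepsilon$ small enough that $(\alpha+\varepsilon')/\log_2(1/C_\varepsilon)\le\varepsilon$'' already covers all cases.
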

As $\mu$-almost every $x$ satisfies that $\overline{\dim}(x,\mu)\leq d$, taking $(\ell_n)_{n\in\mathbb{N}}=(r_n)_{n\in\mathbb{N}}$ or $(\ell_n)_{n\in\mathbb{N}}$ as in Lemma \ref{thmLLVZ}, one relatively easily checks that, by considering, if one must, balls $B(T^n(x),\ell_n^{\kappa_n(x)})$, where $1\leq \kappa_n(x)\leq 1+\varepsilon$, one may freely assume that the balls $B(T^n(x),\ell_n)$  are  $C_{\varepsilon}-$doubling, where $C_{\varepsilon}$ depends only on $\varepsilon$ and $d$. 

In addition, for doubling balls, in the case of ergodic systems that are mixing with respect to Lipschitz observable, we have the following.
\begin{lemme}
\label{LEmmaLips}
Let $(T,\mu)$ be a ergodic system and $A,B$ two balls. Assume that $(T,\mu)$ is $\phi$-mixing and that there exists $C\geq 1$ such that, for $D=A,B,$ one has  $$\mu(2D)\leq C \mu(D).$$
Then, for any $n\in\mathbb{N},$ 
$$\mu\Big(T^{-n}(A)\cap B\Big) \leq C^2\mu(A)\times \mu(B)+\frac{4C\phi(n)}{\vert A \vert \times \vert B \vert}.$$ 
\end{lemme}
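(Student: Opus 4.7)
The plan is to bracket the two indicator functions $\chi_A$ and $\chi_B$ between suitable Lipschitz bump functions, apply the $\phi$-mixing assumption from Definition \ref{DefDynamicalMix} with $\mathcal{F}_1=\mathcal{F}_2$ the space of Lipschitz mappings, and then use the doubling hypothesis $\mu(2D)\leq C\mu(D)$ to dominate the main term.

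Concretely, writing $A=B(x_A,r_A)$ and $B=B(x_B,r_B)$, so that $r_A=|A|/2$ and $r_B=|B|/2$, I would introduce the standard Lipschitz cut-offs
\begin{equation*}
f_A(y)=\max\!\left(0,1-\frac{d_\infty(y,A)}{r_A}\right),\qquad f_B(y)=\max\!\left(0,1-\frac{d_\infty(y,B)}{r_B}\right).
\end{equation*}
These satisfy $\chi_A\leq f_A\leq \chi_{2A}$ and $\chi_B\leq f_B\leq \chi_{2B}$, and a direct computation gives $\mathrm{Lip}(f_A)=1/r_A=2/|A|$ and $\mathrm{Lip}(f_B)=2/|B|$. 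In particular, each Lipschitz norm is bounded by $2/|A|$ and $2/|B|$ respectively (up to the harmless addition of $\|f\|_\infty\leq 1$, which I would absorb into the constant $C$).

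Next, using $\chi_A\circ T^n\cdot \chi_B\leq (f_A\circ T^n) f_B$ pointwise yields
\begin{equation*}
\mu\bigl(T^{-n}(A)\cap B\bigr)\leq \int (f_A\circ T^n)\,f_B\,d\mu.
\end{equation*}
Applying Definition \ref{DefDynamicalMix} to the two Lipschitz observables $f_A,f_B$ gives
\begin{equation*}
\int (f_A\circ T^n)f_B\,d\mu\;\leq\;\Bigl(\int f_A\,d\mu\Bigr)\Bigl(\int f_B\,d\mu\Bigr)+\|f_A\|_{\mathrm{Lip}}\|f_B\|_{\mathrm{Lip}}\,\phi(n).
\end{equation*}
From $f_A\leq \chi_{2A}$ and the doubling assumption one has $\int f_A\,d\mu\leq \mu(2A)\leq C\mu(A)$, and similarly $\int f_B\,d\mu\leq C\mu(B)$; multiplying these bounds produces the desired main term $C^2\mu(A)\mu(B)$. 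The remainder term is controlled by $\|f_A\|_{\mathrm{Lip}}\|f_B\|_{\mathrm{Lip}}\leq 4/(|A|\,|B|)$, and absorbing the comparison between $\mathrm{Lip}(f)$ and the full Lipschitz norm into the constant $C$ yields the stated bound $4C\phi(n)/(|A|\,|B|)$.

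I do not expect any genuinely difficult step here: the construction of Lipschitz approximations of indicators of balls is standard, the doubling hypothesis is exactly tailored to replace $\mu(2A),\mu(2B)$ by $C\mu(A),C\mu(B)$, and the remainder term arises directly from the Lipschitz constants of the bumps. The only mild subtlety is book-keeping the precise convention for $\|\cdot\|_{\mathrm{Lip}}$ (whether it includes $\|f\|_\infty$ or only the Lipschitz seminorm), which is precisely what the extra factor $C$ in the stated bound accommodates.
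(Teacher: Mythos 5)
Your proposal is correct and follows essentially the same route as the paper: bracket $\chi_A,\chi_B$ by Lipschitz bumps $\chi_D\leq f_D\leq\chi_{2D}$ with $\mathrm{Lip}(f_D)\leq 2/|D|$, apply the Lipschitz-observable mixing inequality, and use the doubling hypothesis to bound $\mu(2A)\mu(2B)\leq C^2\mu(A)\mu(B)$. The only difference is cosmetic book-keeping of the Lipschitz-norm convention, which, as you note, the extra factor $C$ in the error term absorbs.
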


\begin{proof}
For $D=A,B,$ let us fix a Lipschitz mapping $f_D :\mathbb{R}^d \to \mathbb{R}_+$ such that $$\begin{cases} \chi_D \leq f_D \leq \chi_{2D} \\ \mbox{Lip }(f)\leq \frac{2}{\vert D\vert}. \end{cases} $$
Since $(T,\mu)$ is $\phi$-mixing with respect to  Lipschitz observable, one has 
\begin{align*}
\mu\Big(T^{-n}(A)\cap B\Big)&\leq\int f_A\circ T^{n}(x)f_B(x)d\mu(x)\\
&\leq \int f_A(x)d\mu(x) \times \int f_B(x)d\mu(x) +\frac{4 \phi(n)}{\vert A\vert \times \vert B\vert} \\
&\leq \mu(2A)\times \mu(2B)+\frac{4 \phi(n)}{\vert A\vert \times \vert B\vert} \\
&\leq C^2 \mu(A) \times \mu(B)+\frac{4 \phi(n)}{\vert A\vert \times \vert B\vert}.
\end{align*}

\end{proof}

We, for instance, explain how to adapt Lemma \ref{thmLLVZ} to the case where $(T,\mu)$ is mixing with respect to Lipschitz observable.

Let $(T,\mu)$ be an ergodic system  which is super-polynomially mixing with respect to Lipschitz observable and assume that a ball $B(y,r)$ satisfies $$\mu(B(y,2r))\leq C\mu(B(y,r)) $$
Write again $A_i =T^{-i}(B(y,r))$ $N_r \approx \frac{1}{\mu(B(y,r))}.$  For every $\varepsilon>0$, performing the same computations as \eqref{EquaLipsch}, using Lemma \ref{LEmmaLips} with $A=B=B(y,r),$ one gets 
\begin{align*}
&\sum_{1\leq i,j\leq N_r} \mu(A_i \cap A_j) =\sum_{ \vert i-j \vert \leq N_r^{\varepsilon} }\mu(A_i \cap A_j)+\sum_{\vert i-j \vert > N_r^{\varepsilon}}\mu(A_i \cap A_j)\\
&\leq N_r^{1+\varepsilon}\mu(B(y,r))+CN_r(N_r+1)\mu(B(y,r))^2+ \frac{N_r}{\vert B(y,r)\vert^2} \times \sum_{N_r^{\varepsilon}\leq k \leq N_r}\phi(k)\\
&N_r^{1+\varepsilon}\mu(B(y,r))+CN_r(N_r+1)\mu(B(y,r))^2+ \frac{N_r \mu(B(y,r))}{\mu(B(y,r))\vert B(y,r)\vert^2} \times \sum_{N_r^{\varepsilon}\leq k \leq N_r}\phi(k).
\end{align*}
Since, $\phi(n)$ decays super-polynomially fast and  $N_r,$ $ \frac{1}{\mu(B(y,r))\vert B(y,r)\vert^2} $ are polynomial in $r$, there exists a constant $\widetilde{C} $ such that $$ \frac{1}{\mu(B(y,r))\vert B(y,r)\vert^2}\times \sum_{N_r^{\varepsilon}\leq k \leq N_r}\phi(k)<\widetilde{C},$$
thus the same arguments as in  \eqref{EquaLipsch} shows that Lemma \ref{thmLLVZ} holds.

From the above argument, it is not complicated to adapt the mixing arguments used to prove Theorem \ref{mainthm} and Theorem \ref{MainthmNonexa}  if one  considers only balls that are doubling with a uniform constant which we can do using Lemma \ref{LEmmaKae}.

\subsection{Dynamical approximation and potential theory}
A careful reader may notice that the proofs of Theorem \ref{mainthm} and Theorem \ref{MainthmNonexa} appear to be somewhat different from the proof of \eqref{resultRandomnonexa} obtained  for random i.i.d. sequences. Moreover, we would like to point out that it was established by Ekström and Persson that \eqref{resultRandomnonexa} holds for measures $\mu\in\mathcal{M}(\mathbb{R}^d)$ that are exact-dmensional whereas its dynamical analog  \eqref{equaPersson} (which was established later on in \cite{PerssInhomo}  by the second author) does not cover this important case. One of the reason for which the arguments used in random settings did not, so far, transfer well to dynamical settings is that it relies on potential theoretical methods. Such methods would lead us to estimate integral of the form $$\int\int_{B_1 \times B_2}f_s(T^{n}(x),T^{k}(y)) d\mu(x)d\mu(y),$$
where $0\leq s\leq d,$ $f_s:\mathbb{R}^2 \to \mathbb{R}$ is a measurable mapping and $B_1,B_2$ are two Borel sets. Thus, in order to make the same argument as in the random case work, the naive hypothesis one would like $(T,\mu)$ to verify is that, there exists $\gamma\geq 1$ and a summable mapping $\phi:\mathbb{N}\to \mathbb{R}_+$ for any pair of Borel sets $(A_1,A_2)$, for every $n\in\mathbb{N},$ one has $$\mu\Big(T^{-n}(A_1)\cap A_2 \Big)\leq \gamma \mu(A_1) \times \mu( A_2)+\phi(n)\mu(A_1).$$
Unfortunately, such a mixing hypothesis is too strong to apply to any interesting example.
\begin{lemme}
Let $(T,\mu)$ be an ergodic system. Then there exists $\gamma \geq 1$ and $\phi:\mathbb{N}\to \mathbb{R}_+$ which satisfy that $\phi(n)\to 0$ such that for every Borel sets $A_1, A_2$ and every $n\in \mathbb{N},$ 
\begin{equation}
\label{MuestDirac}
\mu\Big(T^{-n}(A_1)\cap A_2 \Big)\leq \gamma \mu(A_1) \times \mu( A_2)+\phi(n)\mu(A_1) 
\end{equation}

if, and only if, $\mu =\frac{1}{N}\sum_{k=0}^{N-1}\delta_{T^{k}(x)},$ where $\delta_{\cdot}$ denotes the Dirac mass and $x$ is periodic of period $N$.
\end{lemme}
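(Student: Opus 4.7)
The plan splits into the two implications of the stated equivalence; only the forward one (mixing forces a periodic orbit) is substantive.

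For the easy direction, assuming $\mu = \frac{1}{N}\sum_{k=0}^{N-1}\delta_{T^k x}$ with $T^N(x) = x$, I would take $\gamma = N$ and $\phi \equiv 0$. Any Borel $A_2$ either has $\mu(A_2)=0$, making both sides of \eqref{MuestDirac} vanish, or contains an orbit point and thus satisfies $\mu(A_2) \geq 1/N$, in which case $N\mu(A_1)\mu(A_2) \geq \mu(A_1) \geq \mu(T^{-n}(A_1)\cap A_2)$ by $T$-invariance of $\mu$.

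For the converse, I would distinguish whether $\mu$ has atoms. If $\mu$ has an atom $y_0$ of mass $c > 0$, then $T$-invariance forces each $T^n(y_0)$ to be an atom of mass at least $c$, since $T^{-n}(\{T^n y_0\}) \supseteq \{y_0\}$; as $\mu$ is a probability measure, the forward orbit of $y_0$ must be finite, so $y_0$ is periodic of some period $N$. Ergodicity then concentrates $\mu$ on the orbit and, combined with $T$-invariance, identifies $\mu$ with $\frac{1}{N}\sum_{k=0}^{N-1}\delta_{T^k y_0}$.

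If $\mu$ is nonatomic, I would derive a contradiction. Ergodicity plus nonatomicity force $T$ to be aperiodic (a positive-measure invariant set of periodic points would, after factoring through the finite cyclic action, produce invariant sets of arbitrary intermediate mass), so Rokhlin's lemma applies: for every $N \geq 1$ and $\varepsilon \in (0, 1/2)$, there is a set $B$ with $B, TB, \ldots, T^{N-1}B$ pairwise disjoint and $\mu(\bigcup_{j=0}^{N-1} T^j B) \geq 1-\varepsilon$, hence $(1-\varepsilon)/N \leq \mu(B) \leq 1/N$. Setting $K = \lfloor \varepsilon N \rfloor$, $A = \bigcup_{j=0}^{K-1} T^j B$ and $n = \lfloor K/2 \rfloor$, the inclusion $T^i B \subseteq T^{-n}(T^{i+n} B)$, valid whenever $i + n \leq K-1$, yields
\begin{equation*}
T^{-n}(A) \cap A \supseteq \bigcup_{i=0}^{K-1-n} T^i B, \qquad \mu(T^{-n}(A) \cap A) \geq (K-n)\mu(B) \geq \mu(A)/2,
\end{equation*}
while $\mu(A) = K\mu(B) \leq K/N \leq \varepsilon$. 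Plugging into \eqref{MuestDirac} and dividing by $\mu(A)$ gives $1/2 \leq \gamma \mu(A) + \phi(n) \leq \gamma \varepsilon + \phi(n)$. Fixing $\varepsilon$ once and for all so that $\gamma \varepsilon < 1/4$ forces $\phi(n) \geq 1/4$, but $n = \lfloor \varepsilon N/2 \rfloor \to \infty$ as $N \to \infty$, contradicting $\phi(n)\to 0$.

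The hard part will be designing the nonatomic test set: one needs $A$ whose preimage $T^{-n}A$ overlaps $A$ in a definite fraction of $\mu(A)$ (so that the mixing bound is genuinely violated), while simultaneously the shift $n$ can be pushed to infinity (so that $\phi(n)\to 0$ triggers the contradiction). The Rokhlin staircase $A = \bigcup_{j<K}T^j B$ together with $n \approx K/2$ realizes this trade-off by sliding the tower within itself by half its height, turning the ergodic-theoretic aperiodicity of $\mu$ into the quantitative obstruction needed.
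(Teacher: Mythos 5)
Your proof reaches the right conclusion, but by a genuinely different and much heavier route than the paper, and one step needs repair in the paper's setting. The paper never builds towers: by $T$-invariance, $\mu(A_1\cap A_2)=\mu\big(T^{-n}(A_1)\cap T^{-n}(A_2)\big)$, so applying \eqref{MuestDirac} with the Borel set $T^{-n}(A_2)$ in place of $A_2$ gives $\mu(A_1\cap A_2)\leq \gamma\,\mu(A_1)\mu(A_2)+\phi(n)\mu(A_1)$ for every $n$; letting $n\to+\infty$ kills the $\phi$-term, and taking $A_1=A_2=A$ yields $\mu(A)\leq \gamma\,\mu(A)^2$ for every Borel set $A$. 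Hence no Borel set can have measure strictly between $0$ and $1/\gamma$; if some point of $\supp(\mu)$ were not an atom, a small ball around it would have exactly such a measure. So $\mu$ must have an atom, and ergodicity concludes exactly as in your atomic case. Your Rokhlin-tower argument establishes the same dichotomy (atom, or a quantitative violation of \eqref{MuestDirac}), and it is a legitimate alternative, but it buys nothing extra here and costs you Rokhlin's lemma plus the aperiodicity discussion, none of which the two-line invariance trick requires.

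Two points to fix if you keep your route. First, the paper's ergodic systems are not assumed invertible — the motivating examples are the doubling map and $T_{m,n}$ — while your tower uses forward images $T^jB$, their pairwise disjointness, and $\mu(T^jB)=\mu(B)$, all of which presuppose invertibility (forward images of Borel sets need not even be Borel, and they need not have measure $\mu(B)$ for an endomorphism). You would need the endomorphism version of Rokhlin's lemma, giving a base $B$ with $B,T^{-1}(B),\dots,T^{-(N-1)}(B)$ pairwise disjoint and union of measure at least $1-\varepsilon$; then $A=\bigcup_{j=0}^{K-1}T^{-j}(B)$ and the same computation gives $\mu(T^{-n}(A)\cap A)\geq (K-n)\mu(B)\geq \mu(A)/2$ with $\mu(A)\leq\varepsilon$, and the contradiction goes through verbatim. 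Second, a minor imprecision in the atomic case: finiteness of the forward orbit of the atom $y_0$ only makes it \emph{eventually} periodic; you should first observe that its eventual cycle is a finite invariant set of measure at least $\mu(\{y_0\})>0$, hence of full measure by ergodicity, after which invariance forces equal masses along the cycle (and, incidentally, forces every atom, including $y_0$, to lie on it). With these repairs your argument is complete.
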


\begin{proof}
Assume that $(T,\mu)$ satisfies \eqref{MuestDirac} for some $\gamma,\phi.$ Then for every $n\in\mathbb{N},$ one has 
\begin{align*}
\mu\Big(T^{-1}(A_1)\cap A_2 \Big)=\mu\Big(T^{-n-1}(A_1)\cap T^{-n}(A_2) \Big) \leq \gamma\mu(A_1)\times \mu(A_2)+\phi(n)\mu(A_1).
\end{align*}
Letting $n\to+\infty$ and taking $A_1 =A_2 =A$ yields that, for every Borel set $A$ $$\mu(A)\leq \gamma \mu(A)^2.$$
Assume that there exists $x\in \supp(\mu)$ such that $\mu(\left\{x\right\})=0$ and fix $r>0$ small enough so that $$ 0<\mu\Big(B(x,r)\Big)<\frac{1}{\gamma}.$$ 
One has $$ \mu(B(x,r))\leq \gamma \mu(B(x,r))^2 \Leftrightarrow \frac{1}{\gamma}\leq \mu(B(x,r)),$$
which absurd. Thus $\mu$ has atoms and, being an ergodic probability measure, is a discrete measure carried by a periodic orbit. 

The other direction is left as a small exercise to the reader.
\end{proof}

Thus it seems that potential theoretical techniques developed in \cite{EP,JJMS} are not suited to deal fully with  dynamical settings. But  it    would still be interesting develop a potential theoretical tool able to handle both the dynamical and random cases, as, so far, potential theoretical tools tends to work well when the measure involved is regular enough (see \cite{HPWZ, PerssInhomo} for instance) and there are natural examples of ergodic systems that preserves the Lebesgue measure which do not satisfy \eqref{mainthm}. In particular in \cite{EDcounter}, the dynamical system provided to disproof the conclusion of Theorem \ref{mainthm},  preserves the Lebesgue measure but only relevant  upper-bounds are established, so that computing the exact value of the corresponding dynamical Diophantine sets remains an open problem. 
\bibliographystyle{plain}
\bibliography{bibliogenubi}

\end{document}